\documentclass[11pt,twoside]{article}

\usepackage{mathtools}

\usepackage{amsmath,amssymb,amsthm}
\usepackage{enumerate}
\usepackage{graphicx}

\textheight 9in \topmargin -0.5in \oddsidemargin 0.25in \textwidth
6in \evensidemargin \oddsidemargin \marginparsep 3pt
\marginparwidth 67pt


%
%
\numberwithin{equation}{section}
\newtheorem{theorem}{Theorem}[section]
\newtheorem{assumption}{Assumption}

\newtheorem{lemma}[theorem]{Lemma}
\newtheorem{proposition}[theorem]{Proposition}

\theoremstyle{remark}
\newtheorem{remark}[theorem]{Remark}

%
%
\newcommand{\bke}[1]{\left ( #1 \right )}
\DeclarePairedDelimiter{\norm}{\lVert}{\rVert}
\newcommand{\R}{\mathbb{R}}
\renewcommand{\Re} {\mathop{\mathrm{Re}}}

\newcommand{\ntt}{N([t,\infty))}
\newcommand{\stt}{S([t,\infty))}
\newcommand{\sourceterm}{H}
\newcommand{\profile}{W}

\title{Fast-moving finite and infinite trains of solitons\\ for
nonlinear Schr\"odinger equations}
\author{
Stefan Le Coz,%
\thanks{Institut de Math\'ematiques de Toulouse, Universit\'e Paul Sabatier, 118 route de Narbonne, 31062 Toulouse Cedex 9, France, E-mail address: \texttt{slecoz@math.univ-toulouse.fr}}
\and 
Dong Li,%
\thanks{
Department of Mathematics, University of British Columbia, Vancouver BC
Canada V6T 1Z2, and Institute for Advanced Study, 1st Einstein Drive, Princeton, NJ
08544, USA, E-mail address: \texttt{mpdongli@gmail.com}}
\and Tai-Peng Tsai%
\thanks{
Department of Mathematics, University of British Columbia, Vancouver BC
Canada V6T 1Z2, E-mail address: \texttt{ttsai@math.ubc.ca}}
}

\date{}
\begin{document}

%


\maketitle

\begin{abstract}
%
We study \emph{infinite soliton trains} solutions of nonlinear Schr\"odinger equations (NLS), i.e. solutions behaving at large time as the sum of infinitely many solitary waves. 
Assuming
the composing solitons have sufficiently large relative speeds, we prove the existence and uniqueness of such a soliton train. 
We also give a new construction of multi-solitons (i.e. finite trains)
and prove uniqueness in an exponentially small neighborhood, and  we consider the case of solutions composed of several solitons and kinks (i.e. solutions with a non-zero background at infinity).

\smallskip
\noindent{\it Keywords:}\quad soliton train, multi-soliton, multi-kink,
nonlinear Schr\"odinger equations.

\smallskip
\noindent{\it 2010 Mathematics Subject Classification:} 35Q55(35C08,35Q51).
\end{abstract}

\section{Introduction}
We consider the following nonlinear Schr\"odinger equation (NLS):
\begin{align}
i\partial_t u +\Delta u = -g (|u|^2) u =:-f(u), \label{v100a0}
\end{align}
where $u=u(t,x)$ is a complex-valued function on $\mathbb R \times
\mathbb R^d$, $d\ge 1$.

The purpose of this paper is to construct special families of
solutions to the energy-subcritical NLS
\eqref{v100a0}. We will look for \emph{infinite soliton trains, multi-solitons} and \emph{multi-kinks} solutions.  

Recall that it is generically expected that global solutions to nonlinear dispersive equations like NLS eventually decompose at large time as a sum of solitons plus a scattering remainder (\emph{Soliton Resolution Conjecture}). Except for the specific case of integrable equations, such results are usually out of reach (see nevertheless the recent breakthrough on energy-critical wave equation \cite{DuKeMe12}). In the case of nonlinear Schr\"odinger equations, multi-solitons can be constructed via the inverse scattering transform in the integrable case ($d=1$, $f(u)=|u|^2u$). In non-integrable frameworks, multi-solitons are known to exist since the pioneering work of Merle \cite{Me90} (see Section \ref{subsec:multi} for more details on the existing results of multi-solitons). The multi-solitons constructed up to now were made of a finite number of solitons and there was little evidence of the possibility of existence of infinite trains of solitons (note nevertheless the result \cite{Ka95} in the integrable case). The existence of such infinite solitons trains is however important, as they may provide examples or counter-examples of solutions with borderline behaviors (as it is the case for the Korteweg-de Vries equation, see \cite{MaMe05}). In this paper, we show the existence of such infinite soliton trains for power nonlinearities.
It turns out that our strategy  is very flexible and allows us to prove many results of existence and uniqueness of multi-solitons and multi-kinks solutions for generic nonlinearities. In the rest of this introduction, we state our main results on infinite trains (Section \ref{subsec:infinite}), multi-solitons (Section \ref{subsec:multi}) and multi-kinks (Section \ref{subsec:kinks}) and give a summary of the strategy of the proofs (Section \ref{subsec:strategy}).

\subsection{Infinite soliton trains}\label{subsec:infinite}

Our first main result is on the construction of a solution to \eqref{v100a0} behaving at large time like a sum of  infinitly many solitons. For this purpose we have to
use scale invariance and work with the power nonlinearity
$f_1(u)=|u|^{\alpha}u$ , $0<\alpha<\alpha_{\max}$, $\alpha_{\max} = +\infty$ for $d=1,2$
and $\alpha_{\max}= \frac 4 {d-2}$ for $d\ge 3$. Let $\Phi_0 \in
H^1(\mathbb R^d)$ be a fixed bound state which solves the elliptic
equation
\begin{align*}
-\Delta \Phi_0 + \Phi_0 -|\Phi_0|^{\alpha} \Phi_0=0.
\end{align*}
For $j\ge 1$, $\omega_j>0$ (\emph{frequency}), $\gamma_j \in \mathbb R$ (\emph{phase}), $v_j \in
\mathbb R^d$ (\emph{velocity}), define a soliton $\tilde R_j$ by
\begin{equation}\label{soliton_100}
\tilde R_j(t,x):=e^{i(\omega_j
t-\frac{|v_j|^2}{4}+\frac{1}{2}v_j\cdot
x+\gamma_j)}\omega_j^{\frac{1}{\alpha}} \Phi_0 \big(\sqrt{\omega_j}
(x-v_jt)\big).
\end{equation}
We consider the following soliton train:
\begin{align}
R_\infty = \sum_{j=1}^{\infty} \tilde R_j. \label{train_001}
\end{align}
Since \eqref{v100a0}
is a nonlinear problem, the function $ R_\infty=R_\infty(t,x)$ is no longer a
solution in general. Nevertheless we are going to show that in the vicinity of $R_\infty$ one can
still find a solution $u$  to \eqref{v100a0} to which we refer to as an \emph{infinite soliton train}. More precisely, the solution $u$ to
\eqref{v100a0} is defined on $[T_0,+\infty)$ for some $T_0 \in \R$ and
such that
\begin{align}
\lim_{t\to +\infty} \| u -R_\infty \|_{X([t,\infty)\times \R^d)}=0.
\label{v100a7}
\end{align}
Here $\|\cdot \|_{X([t,\infty)\times \R^d)}$ is some space-time norm
measured on the slab $[t,\infty)\times \R^d$. A simple example is
$X= L^\infty_t L^2_x$ in which case one can replace \eqref{v100a7} by the
equivalent condition
\begin{align}
\lim_{t\to +\infty} \| u(t) -R(t) \|_{L^2}=0. \notag
\end{align}
However the definition \eqref{v100a7} is more flexible as one can
allow general Strichartz spaces (see \eqref{strichartz_def}).

The main idea is that in the energy-subcritical setting, all
solitons have exponential tails (see \eqref{v100a5}). When their
relative speed is large, these traveling solitons are well-separated
and have very small overlaps which decay exponentially in time. At
such high velocity and exponential separation, one does not need
fine spectral details and the whole argument can be carried out as a
perturbation around the desired profile (e.g. the soliton series $R$) in a well-chosen function
space. As our proof is based on contraction estimates, the
uniqueness follows immediately, albeit in a very restrictive function
class.

We require that the
parameters $( \omega_j,v_j)$ of the train satisfy the following assumption.

\begin{assumption}\label{assumption}$\phantom{1}$
\begin{itemize}
\item (Integrability) There exists $r_1\ge 1$, $\frac{d\alpha}2 < r_1 < \alpha+2$, such that
\begin{equation}
 A_{\omega}:= \sum_{j=1}^\infty 
 \omega_j^{\frac 1{\alpha} -\frac d {2r_1}} <\infty. \label{omega_50}
 \end{equation}
\item (High relative speeds)  The solitons travel sufficiently fast: there exists a constant $v_\star >0$ such that
\begin{align}
\sqrt { \min \{ \omega_j,\omega_k \} } \Bigl( |v_k -v_j |   \Bigr)
\ge v_\star   , \qquad \forall\, j\ne k. \label{omega_star_100}
\end{align}
\end{itemize}
\end{assumption}

Since $R_\infty$ may be badly localized, we seek a infinite soliton train
solution to \eqref{v100a0} in the form $u= R_\infty+\eta$, where $\eta$
satisfies the perturbation equation
\begin{align*}
i\partial_t \eta  + \Delta \eta  = -f( R_\infty+\eta ) + \sum_{j=1}^{\infty}
f(\tilde R_j).
\end{align*}
In Duhamel formulation, the
perturbation equation for $\eta $ reads
\begin{align}
\eta (t)= i \int_t^{\infty} e^{i(t-\tau)\Delta}\Bigl( f(R_\infty+\eta )
-\sum_{j=1}^{\infty} f(\tilde R_j) \Bigr) d\tau, \qquad\forall\,
t\ge 0. \label{g_main_001}
\end{align}

 The following theorem gives
the existence and uniqueness of the solution $\eta $ to
\eqref{g_main_001}.

\begin{theorem}[Existence of an infinite soliton train solution] 
\label{thm_inf_1}
Consider \eqref{v100a0} with $f(u)=|u|^{\alpha} u$ satisfying
$0<\alpha<\alpha_{\max}$.  Let $R_\infty$ be given as in
\eqref{train_001}, 
with parameters $\omega_j>0$,
$\gamma_j\in \R$, and $v_j\in \R^d$ for $j \in \mathbb{N}$, which satisfy
Assumption \ref{assumption}. There exist constants $C>0$, $c_1>0$ and
$v_{\sharp}\gg 1$
 such that $($see \eqref{omega_star_100}$)$ if $v_\star
>v_{\sharp}$, then there exists  a unique
solution $\eta \in S([0,\infty))$ 
(see \eqref{strichartz_def} for the  definition of Strichartz space) 
to \eqref{g_main_001} satisfying
\begin{equation}\label{eq:for-uniqueness}
\|\eta \|_{S([t,\infty))} +\|\eta (t)\|_{L^{\alpha+2}} \le C e^{-c_1 v_\star  t},
\qquad \forall\, t\ge 0.
\end{equation}
\end{theorem}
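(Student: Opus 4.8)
The plan is to solve the Duhamel equation \eqref{g_main_001} by a contraction mapping argument in a weighted Strichartz space that builds in the exponential-in-time decay \eqref{eq:for-uniqueness}. First I would isolate the interaction error by setting
\begin{align*}
\sourceterm := f(R_\infty) - \sum_{j=1}^{\infty} f(\tilde R_j),
\end{align*}
which is meaningful because each $\tilde R_j$ solves \eqref{v100a0} exactly, so $R_\infty$ fails to be a solution only through $\sourceterm$. Rewriting the right-hand side of \eqref{g_main_001} as the map
\begin{align*}
\Phi(\eta)(t) := i\int_t^{\infty} e^{i(t-\tau)\Delta}\Bigl( f(R_\infty+\eta) - f(R_\infty) + \sourceterm \Bigr)(\tau)\, d\tau,
\end{align*}
I would seek a fixed point of $\Phi$ in the complete metric space
\begin{align*}
E := \Bigl\{ \eta \in S([0,\infty)) \ : \ \|\eta\|_{\stt} + \|\eta(t)\|_{L^{\alpha+2}} \le C e^{-c_1 v_\star t} \ \ \forall\, t\ge 0 \Bigr\},
\end{align*}
with the distance inherited from $S([0,\infty))$ and with $C,c_1>0$ fixed along the way.

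The heart of the proof, and the step I expect to be the main obstacle, is the source-term estimate
\begin{align*}
\|\sourceterm\|_{\ntt} \le C e^{-c_1 v_\star t}, \qquad \forall\, t \ge 0,
\end{align*}
in the retarded (dual Strichartz) norm on the slab $[t,\infty)$. Pointwise, $\sourceterm = f\bigl(\sum_k \tilde R_k\bigr) - \sum_k f(\tilde R_k)$ is a sum of products in which at least two distinct solitons appear simultaneously; using the exponential tail of the bound state $\Phi_0$ (see \eqref{v100a5}) together with the fact that at time $\tau$ the centers $v_j\tau$ and $v_k\tau$ are separated by $|v_j-v_k|\tau$, I would bound each pairwise interaction by a factor of the form $e^{-c\sqrt{\min\{\omega_j,\omega_k\}}\,|v_j-v_k|\,\tau} \le e^{-c v_\star \tau}$, invoking the high-speed assumption \eqref{omega_star_100}. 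The remaining spatial factors are then summed over all pairs $(j,k)$ through H\"older's inequality at exponent $r_1$ and the integrability assumption \eqref{omega_50}, the constraint $\frac{d\alpha}{2}<r_1<\alpha+2$ being exactly what makes the H\"older exponents dual-Strichartz admissible. Integrating in time over $[t,\infty)$ delivers the claimed decay. The delicate point, absent for finite trains, is the convergence of this infinite double sum of interactions; it is precisely here that \eqref{omega_50} must be used in tandem with the uniform separation \eqref{omega_star_100}.

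With the source term under control, I would close the fixed-point estimate using the inhomogeneous Strichartz inequality,
\begin{align*}
\|\Phi(\eta)\|_{\stt} + \|\Phi(\eta)(t)\|_{L^{\alpha+2}} \lesssim \|\sourceterm\|_{\ntt} + \|f(R_\infty+\eta) - f(R_\infty)\|_{\ntt}.
\end{align*}
For the nonlinear term I would use the pointwise bound $|f(a+b)-f(a)| \lesssim (|a|^{\alpha}+|b|^{\alpha})|b|$, placing $R_\infty$ in $L^\infty_t L^{r_1}_x$ — whose norm is bounded by $A_\omega \|\Phi_0\|_{L^{r_1}}$ by the triangle inequality and \eqref{omega_50} — and placing the factor $\eta$ in the Strichartz norm over $[t,\infty)$. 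Since every $\eta \in E$ decays like $e^{-c_1 v_\star \tau}$, the time integration in the Duhamel/dispersive estimate over the half-line $[t,\infty)$ acts on an exponentially decaying integrand and gains a negative power of $v_\star$; this is the mechanism by which the hypothesis $v_\star > v_\sharp$ converts the estimate into both the self-mapping bound \eqref{eq:for-uniqueness} and a genuine contraction.

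Finally, the analogous difference estimate $|f(R_\infty+\eta)-f(R_\infty+\xi)| \lesssim (|R_\infty|^{\alpha}+|\eta|^{\alpha}+|\xi|^{\alpha})|\eta-\xi|$, treated by the same H\"older and temporal-smallness considerations, shows that $\Phi$ is a contraction on $E$ once $v_\star$ is large enough. Banach's fixed point theorem then yields a unique $\eta \in E$ solving \eqref{g_main_001}, and because the argument is carried out entirely within the class $E$ the uniqueness asserted in \eqref{eq:for-uniqueness} is immediate.
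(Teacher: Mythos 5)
Your overall strategy is the paper's: the paper also solves \eqref{g_main_001} by contraction around $W=R_\infty$ with source $H=f_1(R_\infty)-\sum_j f_1(\tilde R_j)$, proves exactly your source-term estimate (Lemma \ref{lem_train_001}: the tails \eqref{v100a5} plus the speed assumption \eqref{omega_star_100} give the factor $e^{-cv_\star t}$, and \eqref{omega_50} with $\frac{d\alpha}{2}<r_1<\alpha+2$ makes the infinite sums converge), and then closes a weighted fixed-point argument whose smallness comes from negative powers of $\lambda\sim v_\star$ (Proposition \ref{prop_main}). However, two steps fail as you have literally written them, and both repairs point to the same structural fact.

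First, the contraction cannot be carried out ``with the distance inherited from $S([0,\infty))$.'' The only source of smallness in this problem is the integration in time of exponentially decaying quantities; $R_\infty$ itself has no time decay, so in the unweighted metric the Lipschitz estimate for $\Phi$ has no small parameter: every H\"older splitting of $\| |R_\infty|^{\alpha}|\eta_1-\eta_2| \|_{N([0,\infty))}$ either puts $|R_\infty|^\alpha$ in a norm with finite time exponent (infinite, since $R_\infty$ does not decay in $t$) or, at the endpoint pair, requires $R_\infty\in L^\infty_t L^{d\alpha/2}_x$, whose norm diverges for the infinite train because each $\tilde R_j$ contributes the same scale-invariant amount. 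Moreover, knowing $\eta_1,\eta_2\in E$ gives decay of $\eta_1-\eta_2$ only with the fixed constant $2C$, not proportional to the distance, so no contraction in the plain $S([0,\infty))$ metric can follow. You must contract in the exponentially weighted metric that is already implicit in your definition of $E$; the paper uses $\sup_{t\ge0}e^{\lambda t}\|\eta(t)\|_{\alpha+2}$ (its ball $B$ in Proposition \ref{prop_main}) and recovers the Strichartz decay \eqref{17_g_102} only afterwards, for the fixed point. Second, your displayed self-mapping inequality is false for its second left-hand term: $(\infty,\alpha+2)$ is not Schr\"odinger-admissible when $\alpha>0$, so the Strichartz estimate does not control $\|\Phi(\eta)(t)\|_{L^{\alpha+2}}$ by dual-Strichartz norms of the right-hand side. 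That fixed-time component of \eqref{eq:for-uniqueness} must come from the dispersive estimate, and feeding the dispersive estimate requires the \emph{fixed-time} bound $\|H(\tau)\|_{\frac{\alpha+2}{\alpha+1}}\lesssim e^{-cv_\star\tau}$ (the paper's \eqref{19_004}, obtained by interpolating the $L^\infty$ decay \eqref{19_003} against the uniform bound \eqref{19_002}), not the slab norm $\|H\|_{N([t,\infty))}$ you isolate as the key estimate: the slab bound follows from the fixed-time bound by integration, but not conversely. Since your narrative does invoke dispersive estimates and your interaction analysis would in fact produce fixed-time bounds, both issues are repairable without new ideas, but as stated the fixed-point scheme does not close.
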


\begin{remark}
By using Theorem \ref{thm_inf_1} and Lemma \ref{lem_train_001}, one
can justify the existence of a solution $u= R_\infty+\eta $ satisfying
\eqref{v100a0} in the distributional sense. The uniqueness of such
solutions is only proven for the perturbation $\eta $ satisfying
\eqref{g_main_001} and \eqref{eq:for-uniqueness}. In the mass-subcritical case $0<\alpha<\frac 4d$,
the soliton train $ R_\infty$ is in the Lebesgue space $C_{t}^0 L_x^2
\cap L_{tx}^{\infty}$, and one can show that the solution $u=
R_\infty+\eta $ can be extended to all $\mathbb R\times \mathbb R^d$
and satisfies $u\in C_t^0 L_x^2(\mathbb R\times \mathbb R^d) \cap
L_{t, loc }^{\frac{2(d+2)}d} L_x^{\frac{2(d+2)}d} (\mathbb R\times
\mathbb R^d)$ (see \eqref{def_lqloc}). Hence it is a localized
solution in the usual sense. In the mass-supercritical case $\frac
4d\le \alpha<\alpha_{\max}$, the soliton train
$R_\infty=\sum_{j=1}^\infty\tilde R_j$ is no longer in $L^2$ since
each composing piece $\tilde R_j$ has $O(1)$ $L^2$-norm. Nevertheless
we shall still build a regular solution to \eqref{g_main_001} since $
R_\infty$ has Lebesgue regularity $L_t^\infty L_x^{\frac{d\alpha}2+}
\cap L_{tx}^\infty$ which is enough for the perturbation argument to
work. We stress that in this case the solution $\eta $ is only defined
on $[0,\infty)\times \mathbb R^d$ and scatters forward in time in
  $L^2$.
\end{remark}

\begin{remark}
Typically the parameters $(\omega_j,v_j)$ are chosen in the
following order: first we take $(\omega_j)$ satisfying \eqref{omega_50};
then we inductively choose $v_j$ such that the condition
\eqref{omega_star_100} is satisfied. For example one can take for
$j\ge 1$,
 $\omega_j=2^{-j}$ and $v_j= 2^j\bar v $ for $\bar v\in\R^d$, $|\bar v|=v_\star $.
Note that, when $0<\alpha<\frac 4d$ (mass-subcritical case), we can
choose $r_1 \le 2$.  The soliton train is then in $L^\infty_t L^2_x$.
We require $\frac{d\alpha}2 < r_1 $ so that the exponent in
\eqref{omega_50} is positive. The condition $ r_1 < \alpha+2$ will be
needed to show \eqref{19_002} in Lemma \ref{lem_train_001}.
\end{remark}

\begin{remark}
 Note that
we did not introduce initial positions in the definition of $\tilde R_j$, so each soliton starts centered at $0$. With some minor
modifications, our construction can also work for the general case with the solitons starting centered at various $x_j$. For simplicity of presentation we shall not state the
general case here. 
\end{remark}

\begin{remark}
Certainly Theorem \ref{thm_inf_1} can hold in more general
situations. For example instead of taking a fixed profile $\Phi_0$ in
\eqref{soliton_100}, one can draw $\Phi_0$ from a finite set of
profiles $\mathcal A=\{ \Phi_0^1,\cdots, \Phi_0^K \}$ where each
$\Phi_0^j$ is a bound state.  
\end{remark}

\begin{remark}
The rate of spatial decay of multi-solitons is still an open question in the NLS case  (for KdV it is partly known: multi-solitons decay exponentially on the right). In Theorem \ref{thm_inf_1}, the soliton train profile $R_\infty$ around which we build our solution has only a polynomial spatial decay, not uniform in time. Hence we expect the solution $u=R_\infty+\eta$ to have the same decay.
\end{remark}

\subsection{Multi-solitons}\label{subsec:multi}

From now on, we work with a generic nonlinearity and just assume that  $f(u)=g(|u|^2)u$ where the function $g:\; [0,\infty) \to \R$ obeys some H\"older
conditions mimicking the usual power type nonlinearity. Precisely,
\begin{itemize}
\item $g \in C^0([0,\infty), \R) \cap C^2((0,\infty),\R)$, $g(0)=0$ and
\begin{align} \label{v100a1}
 |sg^{\prime}(s)| +|s^2 g^{\prime\prime}(s)| \le C \cdot (s^{\alpha_1} +s^{\alpha_2}), \qquad\forall\, s>0,
\end{align}
where $C>0$, $0<\alpha_1 \le \alpha_2< \frac{\alpha_{\max}}2$.
\end{itemize}

A typical example is $g(s)=s^{\alpha}$ for some
$0<\alpha<\frac{\alpha_{\max}}{2}$.
A useful example to keep in mind is the combined
nonlinearity $g(s)=s^{{\alpha_1}} - s^{{\alpha_2}}$ for
some $0<\alpha_1<\alpha_2<\frac{\alpha_{\max}}{2}$. Other examples can be
easily constructed. 
Throughout the rest of this paper we shall
assume $f(u)=g(|u|^2)u$ satisfy \eqref{v100a1}.
 In this case the corresponding
nonlinearity $f(u)$ is usually called energy-subcritical since there are
lower bounds of the
lifespans of the $H^1$ local solutions which depend only on the $H^1$-norm
(not the profile) of initial data (cf. \cite{Ca03,GiVe79}). The condition
\eqref{v100a1} is a natural generalization of the pure
power nonlinearities. For much of our analysis it can be replaced by the
weaker condition that $g(s)$ and $sg^{\prime}(s)$ are  H\"older continuous
with suitable exponents. However \eqref{v100a1} is fairly easy to check and it suffices
for most applications.

We give a definition of a solitary wave slightly more general than \eqref{soliton_100}. Given a set of
parameters $\omega_0>0$ (\emph{frequency}), $\gamma_0 \in \R$ (\emph{phase}),
$x_0$, $v_0\in \R^d$ (\emph{position} and \emph{velocity}), a \emph{solitary wave}, or a
\emph{soliton}, is a solution to \eqref{v100a0} of the form
\begin{align}
R_{\Phi_0, \omega_0, \gamma_0, x_0,v_0}:= \Phi_0(x-v_0t-x_0 ) \exp
\left( i \Bigl( \frac 12 v_0 \cdot x - \frac 14 |v_0|^2 t + \omega_0
t +\gamma_0 \Bigr) \right), \label{v100a3a}
\end{align}
where $\Phi_0 \in H^1(\mathbb R^d)$ solves the elliptic equation
\begin{align}
-\Delta \Phi_0 + \omega_0 \Phi_0 -f(\Phi_0) =0. \label{v100a4}
\end{align}
A nontrivial $H^1$ solution to \eqref{v100a4} is usually called a
\emph{bound state}. Compared with \eqref{soliton_100}, the main difference is that we do not use the parameter $\omega_j$ to rescale the solitons.

Existence of bound states is guaranteed (see \cite{BeLi83-1}) if we assume, in addition to  \eqref{v100a0},  that
there exists $s_0>0$, such that
\begin{equation}
G(s_0) := \int_0^{s_0} g(\tilde s) d\tilde s > \omega_0s_0. \label{v100a3}
\end{equation}
Note that the  condition \eqref{v100a3}
makes the nonlinearity focusing. 

All bound states are exponentially decaying (cf.
Section 3.3 of \cite{BeSh91} for example), i.e.
\begin{align}
e^{\sqrt{\omega}|x|} ( |\Phi_0| + |\nabla \Phi_0| ) \in
L^{\infty}(\R^d), \quad \text{for all $0<\omega<\omega_0$.} \label{v100a5}
\end{align}
A \emph{ground state} is a bound state which minimizes among all bound states  the \emph{action}
\begin{align}
S(\Phi_0)= \frac 12 \| \nabla \Phi_0\|_2^2 + \frac{\omega_0}2 \|
\Phi_0 \|_2^2 - \frac  12 \int_{\mathbb R^d} G(|\Phi_0|^2) dx. \notag
\end{align}
The ground state is usually unique modulo symmetries of the
equation (see e.g. \cite{Mc93} for precise conditions on the nonlinearity ensuring uniqueness of the ground state). If $d\geq2$ there exist infinitely many other solutions called \emph{excited states}  (see \cite{BeLi83-1,BeLi83-2} for more on ground states and excited states). The
corresponding solitons are usually termed \emph{ground state solitons}
(resp. \emph{excited state solitons}).

A \emph{multi-soliton} is a solution to
\eqref{v100a0} which roughly speaking looks like the sum of $N$
solitons. To fix notations, let (see \eqref{v100a3a})
\begin{align}
R(t,x)= \sum_{j=1}^N R_{\Phi_j,\omega_j,\gamma_j,x_j,v_j}(t,x) =:
\sum_{j=1}^N R_j(t,x), \label{v100a6}
\end{align}
where each $R_j$ is a soliton made from some parameters $(\omega_j,\gamma_j, x_j, v_j)$ and bound state $\Phi_j$ (we assume that \eqref{v100a3} holds true for all $\omega_j$).

If
each $\Phi_j$ in \eqref{v100a6} is a ground state, then the
corresponding multi-soliton is called a \emph{ground state multi-soliton}.
If at least one $\Phi_j$ is an excited state, we call it 
an \emph{excited state multi-soliton}.

We now review in more details some known results on multi-solitons. Most results are
on the pure power nonlinearity $f(u)=|u|^{\alpha}u$ with
$0<\alpha<\alpha_{\max}$ and ground states. If $\alpha=\frac 4d$ (resp.
$\alpha<\frac 4d$, $\alpha>\frac 4d$), then equation
\eqref{v100a0} is called ($L^2$) mass-critical (resp.
mass-subcritical, mass-supercritical). In the integrable case $d=1$,
$\alpha=2$, Zakharov and Shabat \cite{ZaSh72} derived an explicit
expression of multi-solitons by using  the inverse scattering
transform. For the mass-critical NLS, which is non-integrable in higher dimensions, Merle
\cite{Me90} (see Corollary 3 therein) constructed a solution blowing up at exactly $N$ points at the same time, which gives a multi-soliton
after a pseudo-conformal transformation. In
the mass-subcritical case, the ground state solitary waves are
stable. Assuming the composing solitary waves $R_j$ are ground
states and have different velocities (i.e. $v_j \ne v_k$ if $j\ne k$
in \eqref{v100a6}), Martel and Merle \cite{MaMe06} proved the
existence of an $H^1$ ground state multi-soliton $u\in C([T_0,\infty),
H^1)$ such that
\begin{align}
\norm[\Big]{ u(t) -\sum_{j=1}^N R_j(t) }_{H^1} \le C
e^{-\beta\sqrt{\omega_\star }v_\star t}, \qquad \forall\, t\ge
T_0, \label{v100a9}
\end{align}
for some constant $\beta>0$,
where $T_0\in\R$ is large enough, 
and the minimal relative velocity $v_\star $ and the minimal frequency $\omega_\star $ are defined by
\begin{align}
v_\star&:= \min\{ |v_j-v_k|:\;\; 1\le j\ne k\le N \} ,\label{v100a11}\\
\omega_\star &=\min\{\omega_j, \; 1\le j\le N\}.
\end{align}
In the same work, the
authors also considered a general energy-subcritical nonlinearity
$f(u)=g(|u|^2)u$ with $g\in C^1$, $g(0)=0$ and satisfy $\| s^{-\alpha} g^{\prime}(s) \|_{L_s^{\infty}(s\ge 1)} <\infty$
for some $0<\alpha<\alpha_{\max} /2$.
Assuming a nonlinear stability condition around the
ground state (see (16) of \cite{MaMe06}), they proved the existence of
an $H^1$ ground state multi-soliton satisfying the same estimate
\eqref{v100a9}. 

In \cite{CoMaMe11}, C\^{o}te, Martel and Merle considered the
 mass-supercritical NLS ($f(u)=|u|^{\alpha}u$ with $\frac
4d<\alpha<\alpha_{\max}$). Assuming the ground state solitons $R_j$
have different velocities, the authors constructed an $H^1$ ground
state multi-soliton $u$ satisfying \eqref{v100a9}.
This result was sharpened in 1D by Combet: in \cite{Co10}, he showed the existence of a $N$-parameters family of multi-solitons.

In
\cite{CoLC11}, C\^{o}te and Le Coz considered the general
energy-subcritical NLS with $f(u)=g(|u|^2)u$ satisfying 
assumptions similar to \eqref{v100a1} and \eqref{v100a3}. Assuming the solitary
waves $R_j$ are excited states and have large relative velocities,
i.e. assuming
\[
v_\star \geq v_\sharp>0
\]
for $v_\sharp$ large enough, the authors constructed an excited
state multi-soliton $u\in C([T_0,\infty), H^1)$ for $T_0\in \R$ large enough,
which also satisfies  \eqref{v100a9}.

The main strategy used in the above mentioned works
\cite{CoLC11,CoMaMe11,MaMe06,Me90} is the following: one takes a
sequence of approximate solutions $u_n$ solving \eqref{v100a0} with
final data $u_n(T_n)=R(T_n)$, $T_n\to \infty$; by using local
conservation laws and coercivity of the Hessian (this has to be
suitably modified in certain cases, cf. \cite{CoLC11}), one derives
uniform $H^1$ decay estimates of $u_n$ on the time interval $[T_0,
T_n]$ where $T_0$ is independent of $n$; the multi-soliton is then
obtained after a compactness argument. We should point 
out that
the uniqueness of multi-solitons is still left open by the above
analysis (see nevertheless \cite{Co10,CoLC11} for existence of a $1$ and $N$ parameters families of multi-solitons). Under restrictive assumptions on the nonlinearity (e.g. high regularity or flatness assumption at the origin) and a large relative speeds hypothesis, stability of multi-solitons was obtained in \cite{MaMeTs06, Pe97,Pe04,RoScSo03} and instability in \cite{CoLC11}. 
See also
Remark \ref{rem_instable} below.

In this section we give new constructions of multi-solitons.  We work in the context of the
energy-subcritical problem \eqref{v100a0} with $f(u)$ satisfying
\eqref{v100a1} and \eqref{v100a3}
We shall focus on \emph{fast-moving}
solitons, i.e. the minimum relative velocity $v_\star $ defined in
\eqref{v100a11} is sufficiently large. The composing solitons are in
general bound states which can be either ground states or excited
states. In
our next two results, we recover and improve the result from  
\cite[Theorem 1]{CoLC11} in various
settings.  The improvements here are the lifespan and uniqueness.  As for the infinite train, our new proof rely on a contraction argument around the desired profile. We begin
with the pure power nonlinearity case.

\begin{theorem}[Existence and uniqueness of multi-solitons, power nonlinearity case]
\label{thm_N_1} Consider \eqref{v100a0} with $f(u)=|u|^{\alpha} u$
satisfying $0<\alpha<\alpha_{\max}$. Let $R$ be the same as in
\eqref{v100a6} and define $v_\star $ as in \eqref{v100a11}. There exists
constants $C>0$, $c_1>0$ and $v_{\sharp}\gg 1$  such that if
$v_\star >v_{\sharp}$, then there exists a unique solution $u\in
C([0,\infty), H^1)$ to \eqref{v100a0} satisfying
\begin{align*}
e^{c_1 v_\star t} \| u-R \|_{S([t,\infty))} +e^{c_2 v_\star t} \|
  \nabla (u -R) \|_{S([t,\infty))} \le C , \qquad \forall\, t\ge 0.
\end{align*}
Here $c_2=c_1 \cdot \min(1,\alpha) \le c_1$.
In particular $\| u(t)-R(t) \|_{H^1} \le C e^{-c_2 v_\star t}$.
\end{theorem}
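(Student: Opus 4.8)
The plan is to solve the problem by a contraction argument around the profile $R$, in direct parallel with the proof of Theorem~\ref{thm_inf_1} but carried out at the $H^1$ level, since here $R=\sum_{j=1}^N R_j$ is a finite sum of genuine $H^1$ solitons. Writing $u=R+\eta$ and using that each $R_j$ solves \eqref{v100a0} exactly (so that $i\partial_t R+\Delta R=-\sum_{j=1}^N f(R_j)$), the perturbation must satisfy $i\partial_t\eta+\Delta\eta=-f(R+\eta)+\sum_{j=1}^N f(R_j)$, whose backward Duhamel formulation reads
\[
\eta(t)=i\int_t^\infty e^{i(t-\tau)\Delta}\Bigl(f(R+\eta)-\sum_{j=1}^N f(R_j)\Bigr)\,d\tau=:\Psi(\eta)(t).
\]
I would split the integrand into the genuinely nonlinear part $f(R+\eta)-f(R)$ and the source term $\mathcal E(R):=f(R)-\sum_{j=1}^N f(R_j)$, which measures the failure of $R$ to solve the equation and is concentrated on the regions where distinct solitons overlap.

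The first key step is to bound the source term. Because each $R_j$ carries the exponential tail \eqref{v100a5} and the solitons separate linearly in time at a rate controlled by $v_\star$ from \eqref{v100a11}, every product $R_j R_k$ with $j\neq k$ is pointwise $O(e^{-c v_\star t})$ on $[t,\infty)$. Combining this with a H\"older estimate in the dual Strichartz norm $\ntt$, and likewise for $\nabla\mathcal E(R)$, should yield
\[
\|\mathcal E(R)\|_{\ntt}+\|\nabla\mathcal E(R)\|_{\ntt}\lesssim e^{-c v_\star t},\qquad t\ge 0,
\]
for a suitable $c>0$. This is the input that fixes the exponential decay rate of $\eta$.

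I would then run the fixed point in the complete metric space
\[
\mathcal B=\Bigl\{\eta:\ \sup_{t\ge 0}\bigl(e^{c_1 v_\star t}\|\eta\|_{\stt}+e^{c_2 v_\star t}\|\nabla\eta\|_{\stt}\bigr)\le C\Bigr\},\qquad c_2=c_1\min(1,\alpha),
\]
and verify via inhomogeneous Strichartz estimates that $\Psi$ maps $\mathcal B$ into itself and is a contraction once $v_\star>v_\sharp$. For the undifferentiated norm the nonlinear contribution is controlled by the pointwise bound $|f(R+\eta)-f(R)|\lesssim(|R|^\alpha+|\eta|^\alpha)|\eta|$; the large exponential weight generated by $v_\star$ renders these terms subordinate to $\mathcal E(R)$, so the rate $c_1$ is inherited directly from the source-term estimate.

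The main obstacle is the gradient estimate in the low-regularity regime $0<\alpha<1$. There $f(u)=|u|^\alpha u$ is only $C^1$ with an $\alpha$-H\"older derivative, so upon differentiating $f(R+\eta)-f(R)$ the dangerous term $(f'(R+\eta)-f'(R))\nabla R$ can only be controlled by $|\eta|^\alpha|\nabla R|$, which carries $\alpha$ powers of $\eta$ rather than one. This costs a factor $\alpha$ in the exponential weight and is exactly why the gradient decays at the reduced rate $c_2=c_1\min(1,\alpha)\le c_1$; handling it cleanly requires a fractional Leibniz / H\"older nonlinear estimate rather than a naive product rule. Once the contraction is in place, the fixed point $\eta$ produces the solution $u=R+\eta\in C([0,\infty),H^1)$, the stated bound follows from membership in $\mathcal B$ together with the Strichartz control and $H^1\hookrightarrow L^{\alpha+2}$, and uniqueness is immediate since any solution obeying the displayed estimate lies in $\mathcal B$ and hence coincides with the unique fixed point.
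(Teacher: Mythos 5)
Your overall skeleton is the same as the paper's: write $u=R+\eta$, use the backward-in-time Duhamel formula, isolate the interaction source $\mathcal E(R)=f(R)-\sum_j f(R_j)$, prove it is $O(e^{-cv_\star t})$ from the exponential tails \eqref{v100a5} and the separation \eqref{v100a11} (this is exactly Lemma \ref{17_lem_1}), and close a fixed point whose contraction constant is beaten by taking $v_\star$ large. The genuine gap is in where you run the fixed point: you take a \emph{single coupled ball} controlling both $e^{c_1v_\star t}\|\eta\|_{S([t,\infty))}$ and $e^{c_2v_\star t}\|\nabla\eta\|_{S([t,\infty))}$, and claim the contraction closes on $[0,\infty)$ once $v_\star>v_\sharp$. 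This cannot work as stated, because the gradient data are not controlled by $v_\star$: since $\nabla R_j=e^{i\Theta_j}\bigl(\nabla\Phi_j+\tfrac i2 v_j\Phi_j\bigr)(x-v_jt-x_j)$, the bounds on $\|\nabla R\|_r$ and on $\|\nabla\mathcal E(R)\|_r$ in Lemma \ref{17_lem_1} carry a constant $\tilde C_2\gtrsim \bar v:=\max_j|v_j|$, and $\bar v$ is completely unrelated to $v_\star$ (boost all velocities by a huge common vector: $v_\star$ is unchanged, $\bar v$ is arbitrary). In your ball the constant $C$ must then be taken $\gtrsim$ some power of $\tilde C_2$ to accommodate $\nabla\mathcal E(R)$ and $|\eta|^{\min(\alpha,1)}|\nabla R|$; but then, e.g. for $\alpha\ge 1$ where $c_2=c_1$, the term $|\eta|\,(|R|+|\eta|)^{\alpha-1}|\nabla R|$ contributes roughly $\tilde C_2\,C\,e^{-c_1v_\star t}/v_\star$ to the gradient norm, and ball preservation forces $\tilde C_2\lesssim v_\star$ — false in general (already $\bar v\ge v_\star/2$, and $\bar v$ can be arbitrarily larger). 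At $t=0$ there is no surplus exponential decay left to absorb $\tilde C_2$; that absorption device is only available for $t\ge T_0(\tilde C_2)$, which is precisely why the paper's Strichartz-based Proposition \ref{prop_main_1} only yields Theorem \ref{thm_N_2} (large $T_0$), or Theorem \ref{thm_N_2a} under the extra hypothesis \eqref{eq6.1}. A secondary coupling problem: for mass-supercritical $\alpha>4/d$, bounding the self-interaction $\||\eta|^{\alpha+1}\|_{N([t,\infty))}$ inside an $L^2$-level Strichartz framework requires Sobolev embedding, i.e.\ the gradient norm, so you cannot even decouple the non-gradient estimate from the gradient one in your setting.

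The paper's Proposition \ref{prop_main} is engineered exactly to evade this. First, the contraction is run \emph{only} in the weighted norm $\sup_t e^{\lambda t}\|\eta(t)\|_{L^{\alpha+2}}$, using the dispersive estimate $\|e^{i(t-\tau)\Delta}F\|_{\alpha+2}\lesssim|t-\tau|^{-\theta}\|F\|_{(\alpha+2)/(\alpha+1)}$ with $\theta<1$; this needs no gradient at all, even for mass-supercritical $\alpha$, because $\||\eta|^{\alpha+1}\|_{(\alpha+2)/(\alpha+1)}=\|\eta\|_{\alpha+2}^{\alpha+1}$, and all its constants involve only the velocity-independent $C_1$. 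The Strichartz decay \eqref{17_g_102} is then recovered a posteriori. Second, the $H^1$ bound is obtained by a separate \emph{ball-preservation} (not contraction) argument in $B_1$, whose defining constant is $C_2=\tilde C_2$ itself: every term in the gradient estimate ($\|\eta\|^\alpha\|\nabla W\|$, $\|W\|^\alpha\|\nabla\eta\|$, $\|\eta\|^{\alpha}\|\nabla \eta\|$, $\|\nabla H\|$) is \emph{linear} in a quantity bounded by $C_2$ times a decaying exponential, so the smallness requirement \eqref{prop23-H1small} is $C(d,\alpha,C_1)\lambda^{-(1-\theta)}\le\frac12$, independent of $C_2$. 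Hence $v_\sharp$ never sees the velocities, and $\tilde C_2$ only inflates the final constant $C$ — which is exactly what the theorem tolerates. Your scheme lacks this homogeneity-in-$C_2$ structure; as written it proves at best the $[T_0,\infty)$ statement of Theorem \ref{thm_N_2}, not Theorem \ref{thm_N_1}. To repair it you would either have to decouple the two estimates as the paper does, or add a hypothesis such as \eqref{eq6.1} and start at $T_0=1$, extending to $t=0$ by local $H^1$ theory.
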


\begin{remark}
As was already mentioned, Theorem \ref{thm_N_1} is a slight
improvement of a corresponding result (Theorem 1) in \cite {CoLC11}.
Here the multi-soliton is constructed on the time interval
$[0,\infty)$ whereas in \cite{CoLC11} this was done on $[T_0,\infty)$
for some $T_0>0$ large. In particular, we do not have to wait for the interactions between the solitons to be small to have existence of our multi-soliton. However, we have no control on the constant $C$ so at small times our multi-soliton may be very far away from the sum of solitons. The uniqueness of solutions is a subtle
issue, see Remark \ref{rem_instable}.
\end{remark}

The next result concerns the general nonlinearity $f(u)$.

\begin{theorem}[Existence and uniqueness of multi-solitons, general 
nonlinearity case]
\label{thm_N_2} Consider \eqref{v100a0} with $f(u)=g(|u|^2)u$
satisfying \eqref{v100a1} and \eqref{v100a3}. Let $R$ be the same as
in \eqref{v100a6} and define $v_\star $ as in \eqref{v100a11}. There
exist constants $C>0$, $c_1>0$, $c_2>0$, $T_0\gg1$ and $v_{\sharp}\gg 1$, such
that if $v_\star >v_{\sharp}$, then there is a unique solution $u\in
C([T_0,\infty), H^1)$ to \eqref{v100a0} satisfying
\begin{align*}
e^{c_1 v_\star t} \| u-R \|_{S([t,\infty))} +e^{c_2 v_\star t} \|
  \nabla (u -R) \|_{S([t,\infty))} \le C , \qquad \forall\, t\ge T_0.
\end{align*}
\end{theorem}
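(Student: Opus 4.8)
The plan is to reformulate the problem as a fixed-point equation for the perturbation $\eta = u - R$ and solve it by a contraction mapping argument in a weighted Strichartz space, exactly as done for Theorem \ref{thm_inf_1}. Writing $u = R + \eta$ and using that each $R_j$ solves \eqref{v100a0}, the perturbation satisfies
\begin{align*}
i\partial_t \eta + \Delta \eta = -\Bigl( f(R+\eta) - \sum_{j=1}^N f(R_j) \Bigr),
\end{align*}
which in Duhamel form reads $\eta(t) = i\int_t^\infty e^{i(t-\tau)\Delta}\bigl( f(R+\eta) - \sum_j f(R_j)\bigr)\, d\tau$. I would define the solution map $\Psi(\eta)$ by the right-hand side and seek a fixed point in the closed ball
\begin{align*}
\mathcal{B} = \Bigl\{ \eta :\ e^{c_1 v_\star t}\|\eta\|_{S([t,\infty))} + e^{c_2 v_\star t}\|\nabla\eta\|_{S([t,\infty))} \le C,\ \forall\, t\ge T_0 \Bigr\},
\end{align*}
equipped with the metric coming from the zeroth-order weighted norm. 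The exponential weights $e^{c_1 v_\star t}$ and $e^{c_2 v_\star t}$ encode precisely the decay rate we want to prove, and the two different rates reflect that differentiating the H\"older nonlinearity costs a factor controlled by $\min(1,\alpha)$.

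The key steps are as follows. First, split the nonlinear source term into the genuine interaction part and the part involving $\eta$. The source $\sum_j f(R_j) - f(R)$ measures the overlap between distinct solitons; since each $R_j$ decays exponentially (see \eqref{v100a5}) and the solitons separate at speed at least $v_\star$, this term is pointwise bounded by $e^{-c v_\star t}$ times a localized profile, and I would estimate its Strichartz norm by the exponentially small quantity that seeds the contraction. Second, for the difference $f(R+\eta) - f(R)$ I would use the H\"older-type bounds \eqref{v100a1} on $g$, together with the fact that $R$ and $\eta$ are bounded in the relevant Lebesgue spaces, to obtain a Lipschitz estimate with a small constant. Third, I would apply the inhomogeneous Strichartz estimate over $[t,\infty)$; the crucial observation is that integrating the exponentially decaying bounds from $t$ to $\infty$ reproduces a bound of the same exponential form with a constant improved by a factor $1/(c_1 v_\star)$, so for $v_\star > v_\sharp$ large enough this factor beats the Lipschitz constant and both the self-mapping $\Psi(\mathcal{B})\subset\mathcal{B}$ and the contraction property hold. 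The gradient estimate is handled analogously, differentiating the Duhamel formula and absorbing the extra derivative falling on the nonlinearity, which is where the rate $c_2 = c_1\min(1,\alpha)$ appears.

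The main obstacle, relative to the pure-power case of Theorem \ref{thm_N_1}, is the lack of scaling and the weaker regularity of the general nonlinearity. Because $g$ only satisfies the H\"older bounds \eqref{v100a1} rather than being a smooth power, the nonlinear estimates must be carried out carefully to control terms like $|s^\alpha g'(s)|$ near the origin and at infinity, and the gradient estimate requires controlling $\nabla f(u) = g'(|u|^2)(\cdots)\nabla u + \cdots$ using the growth assumption on $sg'(s)$ and $s^2 g''(s)$. This is precisely why the theorem is stated only for $t \ge T_0$ with $T_0$ large: unlike the power case where the argument runs from $t=0$, here one needs the solitons to have already separated enough that the $L^\infty$ and $L^{\frac{d\alpha}{2}}$ norms of $R$ on $[T_0,\infty)$ are small enough (or at least the overlaps are exponentially small) for the H\"older nonlinear estimates to close. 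Once the contraction is established on $[T_0,\infty)$, uniqueness in $\mathcal{B}$ is immediate from the fixed-point theorem, and standard local well-posedness in $H^1$ together with the derived decay yields $u \in C([T_0,\infty), H^1)$.
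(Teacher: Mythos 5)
Your overall strategy coincides with the paper's: write $u=R+\eta$, put the equation for $\eta$ in Duhamel form with integration from $t$ to $\infty$, split the nonlinearity into the Lipschitz difference $f(R+\eta)-f(R)$ and the interaction source $\sourceterm=f(R)-\sum_{j}f(R_j)$ (which Lemma \ref{17_lem_1} bounds by $e^{-\tilde c_1\sqrt{\omega_\star}v_\star t}$ in every $L^r$), and run a contraction in exponentially weighted Strichartz norms; this is exactly Proposition \ref{prop_main_1} applied with $\profile=R$.

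There is, however, a genuine gap in your account of where the smallness that closes the contraction comes from, and correspondingly in your explanation of the role of $T_0$. You claim that time integration produces a factor $1/(c_1 v_\star)$ which beats the Lipschitz constant once $v_\star>v_\sharp$, and that $T_0$ is needed so that the $L^\infty$ norms of $R$ (or the overlaps) become small on $[T_0,\infty)$. Neither is accurate: the overlaps are already exponentially small from $t=0$ on, and $\|R(t)\|_\infty$ is never small --- it is of size $O(1)$ for all time. The real obstruction sits in the gradient estimate. By \eqref{v100a3a} one has $|\nabla R_j|\sim |v_j|\,|R_j|+|\nabla\Phi_j|$, so the constant $C_2$ controlling $\|\nabla R\|_\infty$ and $\|\nabla \sourceterm\|_2$ in \eqref{v600a1} grows like $\bar v=\max_j|v_j|$, and $\bar v$ is \emph{not} controlled by $v_\star$: a Galilean boost makes $\bar v$ arbitrarily large while leaving all relative speeds, hence $v_\star$, unchanged. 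Consequently, terms such as $\| |\eta|^{\beta_1}|\nabla R|\|_{N([t,\infty))}\lesssim C_2\, e^{-\lambda\beta_1(t-1)}$ cannot be absorbed by taking $v_\star$ large with a threshold $v_\sharp$ independent of the velocities; if you let $v_\sharp$ depend on $C_2$ to beat them, the threshold would depend on the $(v_j)$ themselves, contrary to the intended statement (compare the explicit ``independent of $(v_j)$'' in Theorem \ref{thm:multi-kinks}), and the result would say nothing about boosted configurations. The paper's resolution --- and the entire point of Proposition \ref{prop_main_1} and the remark following it --- is that the contraction threshold $\lambda_*$ is arranged to be \emph{independent of $C_2$}, while every $C_2$-carrying term is made small by choosing $T_0=T_0(C_2)$ large, so that $C_2\,e^{-c\lambda T_0}\le\varepsilon$. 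This is precisely why $T_0$ appears in Theorem \ref{thm_N_2} and why it can be removed only under extra hypotheses such as $\bar v\le M v_\star^M$ (Theorem \ref{thm_N_2a}). As written, your sketch would not close the gradient estimate without this mechanism. A minor further point: in the general-nonlinearity setting there is no single $\alpha$, so your rate $c_2=c_1\min(1,\alpha)$ (borrowed from the pure-power Proposition \ref{prop_main}) should be replaced by the rate coming from the small H\"older exponent, namely $c_1\lambda$ with $c_1=\beta_1/2$, $\beta_1\le 2\alpha_1$, as in the proof of Proposition \ref{prop_main_1}.
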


\begin{remark}\label{rem_T0}
Unlike Theorem \ref{thm_N_1}, the solution in Theorem \ref{thm_N_2}
exists only for $t\ge T_0$ with $T_0$ sufficiently large.  To take
$T_0=0$, our method requires extra conditions. For such results see
Section \ref{sec:T0}. We can also extend Theorem \ref{thm:multi-kinks} similarly.
\end{remark}

\begin{remark} \label{rem_instable}
In Theorems \ref{thm_N_1} and \ref{thm_N_2}, the uniqueness of the multi-soliton
solution holds in a quite restrictive function class whose Strichartz-norm decay as $e^{-c_1  v_\star t}$.
 A natural question is
whether uniqueness holds in a wider setting. In general this is a
very subtle issue and in some cases one cannot get away with the
exponential decay condition. In \cite{CoLC11}, the authors considered the case
when one of the composing soliton, say $R_1$ is unstable. Assuming
$g\in C^{\infty}$ (see \eqref{v100a0}) and the operator $L=-i\Delta
+ i\omega_1 -i df(\Phi_1)$ has an eigenvalue $\lambda_1 \in \mathbb C$
with $\rho:=\Re(\lambda_1)>0$, they constructed a one-parameter family
of multi-solitons $u_{a}(t)$ such that for some $T_0=T_0(a)>0$,
\begin{align*}
\| u_a(t) - \sum_{j=1}^N R_j(t) -a Y(t) \|_{H^1(\mathbb R^d)} \le C
e^{-2\rho t}, \qquad \forall\, t\ge T_0.
\end{align*}
Here $Y(t)$ is a nontrivial solution of the linearized flow around
$R_1$, and $e^{\rho t} \| Y(t)\|_{H^1}$ is periodic in $t$. This
instability result shows that the exponential decay condition in
the uniqueness statement cannot be removed in general for 
NLS with unstable solitary waves.
\end{remark}

\subsection{Multi-kinks}\label{subsec:kinks}

In this subsection, we push our approach further and attack the problem of the existence of multi-kinks, i.e. solutions built upon solitons and  their nonlocalized counterparts the kinks. 
Before stating our result, let us first mention some related works. When its solutions are considered with a non-zero background  (i.e. $|u|\to \nu\neq0$  at
$\pm\infty$  ), the NLS equation \eqref{v100a0} is often refered to as the Gross-Pitaevskii equation. For general non-linearities, Chiron \cite{Ch12}  investigated the existence of traveling wave solutions with a non-zero background and showed that various types of nonlinearities can lead to a full zoology of profiles for the traveling waves. In the case of the ``classical'' Gross-Pitaevskii equation, i.e. 
 when $f(u)=(1-|u|^2)u$ and solutions verify $|u|\to 1$ at
infinity, the profiles of the traveling kink solutions  $K(t,x)=\phi_c(x-ct)$ are explicitly known and given for $|c|<\sqrt{2}$ by the formula
\[
\phi_c(x)=\sqrt{\frac{2-c^2}{2}}\operatorname{tanh}\left(\frac{x\sqrt{2-c^2}}{2}\right)+i\frac{c}{\sqrt{2}}
\]
with $\omega=0$.
(in particular, one can see that the limits at $-\infty$ and $+\infty$ are different, thus justifying the name ``kink''). 
In \cite{BeGrSm12}, B\'ethuel, Gravejat and Smets proved the stability forward in time of a profile composed of several kinks traveling at different speeds. Note that, due to the non-zero background of the kinks, the profile cannot be simply taken as a sum of kinks and one has to rely on another formulation of the Gross-Pitaevskii equation to define properly what is a multi-kink.

The main differences between our analysis and the works above mentioned are, first, that our kinks have a zero  background on one side and a non-zero one on the other side, and second, that, due to the Galilean transform used to give a speed to the kink, our kinks have infinite energy (due to the non-zero background, the rotation in phase generated by the Galilean transform is not killed any more by the decay of the modulus). In particular, this would prevent us to use energy methods as it was the case for multi-solitons in \cite{CoLC11,CoMaMe11,MaMe06} or multi-kinks \cite{BeGrSm12}.

 We place ourselves in dimension $d=1$. In such context and under suitable assumptions on the nonlinearity $f$, \eqref{v100a0} admits kink solutions. More precisely,
given $\gamma,\omega,v,x_0\in\R$, 
what we call a \emph{kink} solution of \eqref{v100a0}  (or \emph{half-kink}) is a function $K=K(t,x)$ defined similarly as a soliton by
\begin{equation*}
K(t,x):=e^{i\bigl( \frac 12 v x - \frac 14 |v|^2 t + \omega
t +\gamma \bigr)}\phi(x-vt-x_0),
\end{equation*}
but where $\phi$ satisfies the profile equation on $\R$ \emph{with a non-zero boundary condition} at one side of the real line, denoted by $\pm\infty$ and zero boundary condition on the other side (denoted by $\mp\infty$):
\begin{equation}\label{eq:snls}
\left\{
\begin{aligned}
&-\phi''+\omega\phi-f(\phi)=0,\\
&\lim_{x\to\mp\infty}\phi(x)=0,\quad \lim_{x\to\pm\infty}\phi(x)\neq0.
\end{aligned}
\right.
\end{equation}
The existence of half-kinks is granted by the following proposition.

\begin{proposition}\label{prop:existence-kink}
Let $f:\R\to\R$ be a $C^1$ function with $f(0)=0$ and define $F(s):=\int_0^sf(t)dt$. 
For $\omega\in\R$, let
\[
\zeta(\omega):=\inf\{ \zeta>0, F(\zeta)-\frac\omega2\zeta^2=0\}
\]
and assume that
there exists $\omega_1 \in\R$ such that 
\begin{equation}\label{eq:omega1}
\zeta(\omega_1)>0,\qquad f'(0)-\omega_1<0, \qquad f(\zeta(\omega_1))-\omega_1\zeta(\omega_1)=0.
\end{equation}
 Then  for $\omega=\omega_1$ there exists a kink profile solution  $\phi\in\mathcal C^2(\R)$ of \eqref{eq:snls}, i.e. $\phi$ is unique (up to translation), positive  and satisfies $\phi>0$, $\phi'>0$ on $\R$ and the boundary conditions
\begin{equation}\label{eq:connection}
\lim_{x\to-\infty}\phi(x)=0,\qquad \lim_{x\to+\infty}{\phi(x)}=\zeta(\omega_1)>0.
\end{equation}
If in addition
    \[
    f'(\zeta(\omega_1))-\omega_1<0,
    \]
then  for any $0<\delta<\omega_1- \max\{f'(0),f'(\zeta(\omega_1))\}$ there exists $C>0$ such that
    \begin{equation}\label{eq:exponential-decay}
    |\phi'(x)|+|\phi(x)\mathbf 1_{\{x<0\}} |+|(\zeta_1(\omega_1)-\phi(x) )\mathbf 1_{\{x>0\}}|\leq Ce^{-\delta|x|}.
    \end{equation} 
\end{proposition}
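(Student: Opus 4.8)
The plan is to read \eqref{eq:snls} as an autonomous second-order ODE and run a phase-plane analysis. Writing the profile equation as $\phi''=\omega_1\phi-f(\phi)=-V'(\phi)$ with $V(\phi):=F(\phi)-\frac{\omega_1}{2}\phi^2$, the mechanical energy $E:=\frac12(\phi')^2+V(\phi)$ is a first integral. First I would record how each hypothesis in \eqref{eq:omega1} enters. Since $f(0)=0$ we have $V'(0)=0$, and the condition $f(\zeta_1)-\omega_1\zeta_1=0$ (with $\zeta_1:=\zeta(\omega_1)$) gives $V'(\zeta_1)=0$, so $0$ and $\zeta_1$ are rest points; moreover $V(0)=0$ and, by definition of $\zeta(\omega_1)$ as a zero of $F(\zeta)-\frac{\omega_1}2\zeta^2$, also $V(\zeta_1)=0$, placing both rest points on the same level $\{E=0\}$. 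The hypothesis $V''(0)=f'(0)-\omega_1<0$ makes $0$ a strict local maximum of $V$ (a hyperbolic saddle), and combined with $\zeta_1$ being the \emph{smallest} positive zero of $V$ this forces $V<0$ throughout the open interval $(0,\zeta_1)$.

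I would then build the connecting orbit by quadrature. On $\{E=0\}$ and for $\phi\in(0,\zeta_1)$ one has $(\phi')^2=-2V(\phi)>0$; choosing the increasing branch $\phi'=\sqrt{-2V(\phi)}$ and separating variables gives $x-x_0=\int_{\phi_0}^{\phi}ds/\sqrt{-2V(s)}$, which produces a solution with range $(0,\zeta_1)$ and $\phi'>0$ everywhere, hence positive, strictly increasing, and (from the equation) of class $C^2$. To get the boundary conditions \eqref{eq:connection} I would show the quadrature integral diverges at both ends: near $0$, $-2V(s)=(\omega_1-f'(0))s^2(1+o(1))$ so the integrand is comparable to $s^{-1}$ and $x\to-\infty$ as $\phi\to0^+$; near $\zeta_1$, since $V(\zeta_1)=V'(\zeta_1)=0$ and $V\in C^2$ we have $-2V(s)=O((\zeta_1-s)^2)$, so the integrand is bounded below by a multiple of $(\zeta_1-s)^{-1}$ and $x\to+\infty$ as $\phi\to\zeta_1^-$. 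Uniqueness up to translation follows from energy conservation: any solution obeying \eqref{eq:connection} satisfies $(\phi')^2=2E-2V(\phi)$, and letting $x\to-\infty$ (so $\phi\to0$, $V(\phi)\to0$) forces $(\phi')^2\to2E$; since $(\phi')^2\ge0$ and $\phi$ stays bounded, necessarily $E=0$, and once $E=0$ is fixed, $\phi'=\sqrt{-2V(\phi)}$ determines $\phi$ up to the translation constant $x_0$.

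For the exponential decay \eqref{eq:exponential-decay} I would invoke the extra hypothesis $f'(\zeta_1)-\omega_1<0$, which makes $\zeta_1$ a second hyperbolic saddle ($V''(\zeta_1)<0$). Linearizing at the endpoints, near $\phi=0$ the equation is $\phi''=(\omega_1-f'(0))\phi+o(\phi)$, whose orbit decaying as $x\to-\infty$ behaves like $e^{\sqrt{\omega_1-f'(0)}\,x}$; near $\zeta_1$, setting $\psi:=\zeta_1-\phi$ one gets $\psi''=(\omega_1-f'(\zeta_1))\psi+o(\psi)$ with $\psi\sim e^{-\sqrt{\omega_1-f'(\zeta_1)}\,x}$ as $x\to+\infty$. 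The same rates govern $\phi'$ (equivalently, read off from $\phi'=\sqrt{-2V(\phi)}$ and the quadratic vanishing of $V$ at each endpoint), and transferring these bounds to the three terms in \eqref{eq:exponential-decay} yields the claim for every admissible $\delta$. The step I expect to be the main obstacle is making these endpoint asymptotics quantitative: the quadrature gives convergence essentially for free, but pinning the exponential rate (and the matching of both $\phi$ and $\phi'$) cleanly requires either a careful estimate of the singular integral near each endpoint or an appeal to the stable/unstable manifold theorem at the two hyperbolic rest points, the latter being my preferred route since hyperbolicity delivers the exponential convergence of $(\phi,\phi')$ directly.
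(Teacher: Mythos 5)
Your proposal is correct, and it rests on the same mechanism as the paper's own proof: the first integral $\frac12(\phi')^2=-V(\phi)$ with $V(\phi)=F(\phi)-\frac{\omega_1}{2}\phi^2$, whose zero-energy branch over $(0,\zeta_1)$, $\zeta_1:=\zeta(\omega_1)$, is the kink, and uniqueness coming from the fact that any connecting solution must lie on that same level set. The execution differs in three places. For existence, the paper solves the initial-value problem with prepared data $\phi(0)=\phi_0\in(0,\zeta_1)$, $\phi'(0)=\sqrt{\omega_1\phi_0^2-2F(\phi_0)}$, uses Cauchy--Lipschitz to rule out the orbit ever touching the equilibria $0$ and $\zeta_1$, and identifies the limits at $\pm\infty$ as zeros of $V$ via monotonicity and boundedness; you instead invert the quadrature $x=\int d\phi/\sqrt{-2V(\phi)}$ and get globality and both boundary conditions simultaneously from the divergence of the integral at the two endpoints. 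Your endpoint analysis at $\zeta_1$ is the more careful one: you use only $V(\zeta_1)=V'(\zeta_1)=0$ and $V\in C^2$ to get $-2V(s)=O((\zeta_1-s)^2)$, which is exactly what is needed since hyperbolicity at $\zeta_1$ is \emph{not} assumed in the first part of the proposition. For the decay, the paper differentiates the equation to obtain $-(\phi')''+(\omega_1-f'(\phi))\phi'=0$ and quotes classical ODE arguments, whereas you linearize at the two rest points (now hyperbolic saddles) and invoke the stable/unstable manifold theorem; these routes are interchangeable and both yield the rates $\sqrt{\omega_1-f'(0)}$ and $\sqrt{\omega_1-f'(\zeta_1)}$. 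One shared caveat, which is a defect of the statement rather than of your argument: those rates carry square roots, so when $\omega_1-\max\{f'(0),f'(\zeta_1)\}>1$ neither your proof nor the paper's covers every $\delta$ in the range literally written before \eqref{eq:exponential-decay}; the natural corrected range is $0<\delta<\sqrt{\omega_1-\max\{f'(0),f'(\zeta_1)\}}$.
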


\begin{remark}
By uniqueness we mean that when $\omega=\omega_1$ the only solutions connecting $0$ to $\zeta(\omega_1)$ (i.e. satisfying \eqref{eq:connection}) are of the form $\phi(\cdot+c)$ for some $c\in\R$. 
\end{remark}

\begin{remark}
Using the symmetry $x\to -x$ it is easy to see that Proposition \ref{prop:existence-kink} also implies the existence and uniqueness of a kink solution $\phi$ satisfying
\[
\lim_{x\to-\infty}\phi(x)=\zeta(\omega_1)>0,\qquad \lim_{x\to+\infty}{\phi(x)}=0.
\]
Reverting the $>$ into the assumptions of Proposition \ref{prop:existence-kink} we immediately obtain the existence of a kink profile connecting $0$ to $\zeta(\omega_1)<0$.
 \end{remark}

\begin{remark} 
 It is well known (see \cite{BeLi83-1}) that if instead of \eqref{eq:omega1} 
we assume
that there exists $\omega_0 \in\R$ such that 
\begin{equation*}
\zeta(\omega_0)>0,\qquad f(\zeta(\omega_0))-\omega\zeta(\omega_0)>0,
\end{equation*}
 then for $\omega=\omega_0$ there exists a soliton profile, i.e. a unique positive even solution  $\phi\in\mathcal C^2(\R)$ to \eqref{eq:snls} with boundary conditions
\begin{equation*}
\lim_{x\to\pm\infty}\phi(x)=0.
\end{equation*}
\end{remark}

The profile on which we want to build a solution to \eqref{v100a0} is the following. Take $N\in\mathbb N$, $(v_j,x_j,\omega_j,\gamma_j)_{j=0,\dots,N+1}\subset\R^4$ such that $v_0<\dots<v_{N+1}$. Assume that for $\omega_0$ and $\omega_{N+1}$ there exist  two kink profiles $\phi_0$ and $\phi_{N+1}$ (solutions of \eqref{eq:snls}) satisfying the boundary conditions
\begin{align*}
& \lim_{x\to-\infty}\phi_0(x)\neq 0,&&	 \lim_{x\to+\infty}\phi_0(x)= 0,\\
& \lim_{x\to-\infty}\phi_{N+1}(x)= 0,&&	 \lim_{x\to+\infty}\phi_{N+1}(x)\neq 0.
\end{align*}
Denote by $K_0$ and $K_{N+1}$ the corresponding kinks.
For $j=1,\dots,N$, assume as before that we are given localized solitons profiles $(\phi_j)_{j=1,\dots,N}$ and let $R_j$ be the corresponding solitons. Consider the following approximate solution composed of a kink on the left and on the right and solitons in the middle (see Figure \ref{fig:multi-kink}):
\begin{equation}\label{eq:def-kinks-solitons}
KR(t,x):=K_0(t,x)+\sum_{j=1}^NR_j(t,x)+K_{N+1}(t,x).
\end{equation}

\begin{figure}[ht]
   \begin{center}
\includegraphics{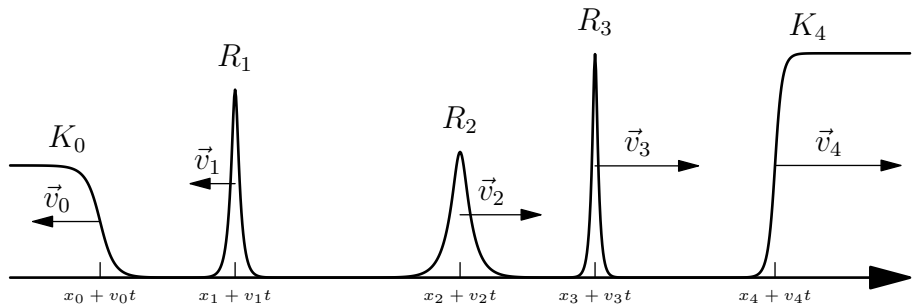}
   \end{center}
   \caption{Schematic representation of the multi-kink profile $KR$ in \eqref{eq:def-kinks-solitons}}
\label{fig:multi-kink}
\end{figure}

Our last result concerns solutions of that are composed of solitons and half-kinks.
\begin{theorem}\label{thm:multi-kinks}
Consider \eqref{v100a0} with $d=1$, $f(u)=g(|u|^2)u$
satisfying \eqref{v100a1}, and let $KR$ be the profile  defined in \eqref{eq:def-kinks-solitons}. Define $v_\star$ by 
\[
v_\star:=\inf\{|v_j-v_k|;\;j,k=0,\dots,N+1,\,j\neq k\}.
\]
Then there exist $v_\sharp>0$ (independent of $(v_j)$) large enough, $T_0\gg 1$
and constants $C,c_1,c_2>0$ such that if $v_\star>v_\sharp$, then
there exists a (unique) multi-kink solution
$u\in\mathcal{C}([T_0,+\infty),H^1_{\mathrm{loc}}(\mathbb R))$ to
  \eqref{v100a0} satisfying
\[
e^{c_1v_\star t}\norm{u-KR}_{S([t,+\infty))}+e^{c_2v_\star t}\norm{\nabla(u-KR)}_{S([t,+\infty))}
\leq C,\qquad\forall t\geq T_0. 
\]
\end{theorem}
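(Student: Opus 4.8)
The plan is to reproduce the contraction-mapping scheme of Theorems \ref{thm_N_1} and \ref{thm_N_2}, adapted to the non-zero background carried by the two half-kinks. Seeking $u = KR + \eta$ and using that $K_0$, $R_1,\dots,R_N$, $K_{N+1}$ each solve \eqref{v100a0}, the perturbation must satisfy
\[
i\partial_t\eta + \partial_x^2\eta = -f(KR+\eta) + f(K_0) + \sum_{j=1}^N f(R_j) + f(K_{N+1}).
\]
Writing $\mathcal{E} := f(KR) - f(K_0) - \sum_{j=1}^N f(R_j) - f(K_{N+1})$ for the interaction error, its Duhamel form integrated from $+\infty$ (so as to encode the prescribed behaviour) reads
\[
\eta(t) = i\int_t^\infty e^{i(t-\tau)\partial_x^2}\Bigl(\bke{f(KR+\eta) - f(KR)} + \mathcal{E}\Bigr)\,d\tau.
\]
I would run the fixed point for this map in the weighted space with norm
\[
\norm{\eta}_\ast := \sup_{t\ge T_0}\Bigl(e^{c_1 v_\star t}\norm{\eta}_{\stt} + e^{c_2 v_\star t}\norm{\nabla\eta}_{\stt}\Bigr),
\]
on a ball whose radius is comparable to the size of the source term, and read off existence, uniqueness and the stated decay from the fixed point via $u = KR+\eta$, noting that $KR\in C([T_0,\infty),H^1_{\mathrm{loc}})$ and $\eta\in H^1$ give $u\in C([T_0,\infty),H^1_{\mathrm{loc}})$.

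The first key estimate is that the source term is exponentially small, $\norm{\mathcal{E}}_{\ntt} + \norm{\nabla\mathcal{E}}_{\ntt}\lesssim e^{-cv_\star t}$. Since $v_0<\dots<v_{N+1}$ with all gaps $\ge v_\star$, at time $\tau$ the centres $x_k + v_k\tau$ are separated by $\gtrsim v_\star\tau$, so any product of two distinct profiles is pointwise $\lesssim e^{-c v_\star\tau}$, using the exponential decay of the solitons \eqref{v100a5} together with the zero-side exponential decay of the kinks from \eqref{eq:exponential-decay}. The one genuinely new point is the background: near $+\infty$, writing $\rho := K_0 + \sum_j R_j$ (exponentially small there) and $\mathcal{E} = \bke{f(K_{N+1}+\rho)-f(K_{N+1})} - f(K_0) - \sum_j f(R_j)$, the first difference is $\lesssim |f'(\cdot)|\,|\rho|\lesssim |\rho|$ because $f\in C^1$ with $f'$ bounded on the bounded range of $K_{N+1}$; the remaining terms are exponentially small, the treatment near $-\infty$ is symmetric, and in the bulk at most one soliton is non-negligible. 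Crucially, because $\rho$ and its derivative decay exponentially in $x$ where $K_{N+1}$ fails to, this also shows $\mathcal{E}\in L^2_x$ even though $KR\notin L^2$.

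For the contraction one must control $\norm{f(KR+\eta_1)-f(KR+\eta_2)}_{\ntt}$ and its gradient. Here the background forbids placing $|KR|^{2\alpha_1}+|KR|^{2\alpha_2}$ in any space-integrable norm, so the smallness cannot come from spatial concentration; instead it comes from the exponential weight. Indeed, by \eqref{v100a1} the difference is $\lesssim\bigl(\norm{KR}_{L^\infty_{t,x}}^{2\alpha_1}+\norm{KR}_{L^\infty_{t,x}}^{2\alpha_2}+\dots\bigr)|\eta_1-\eta_2|$ with an $O(1)$ but non-decaying coefficient, and in the dual Strichartz norm over $[\tau,\infty)$ the time integral of the exponentially decaying factor $e^{-c_1 v_\star\tau}$ produces a gain $\sim v_\star^{-\theta}$ for some $\theta>0$; taking $v_\star>v_\sharp$ large makes this coefficient small and closes the contraction. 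This is where the fast-moving hypothesis pays off beyond merely shrinking the interactions. The gradient estimates produce an extra term $\bke{f'(KR+\eta)-f'(KR)}\nabla KR$ whose control uses only the Hölder regularity of $f'$ near the origin (governed by $\alpha_1$ in \eqref{v100a1}), forcing the slower rate $c_2<c_1$ exactly as in Theorem \ref{thm_N_2}; as there, the absence of scaling for general $g$ is why one only obtains the solution for $t\ge T_0$ with $T_0$ large.

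\textbf{Main obstacle.} The principal difficulty is precisely the infinite-energy, non-zero background of the kinks: $KR\notin L^2$ (indeed $\notin H^1$), so one cannot run the argument on $u$ in $L^2$-based Strichartz spaces nor invoke the energy/virial methods of \cite{CoLC11,CoMaMe11,MaMe06}. Everything must be phrased for the localized perturbation $\eta$, and the delicate points are (i) verifying that the source $\mathcal{E}$ and every nonlinear remainder lie in $\ntt$ (in particular in $L^2_x$) even in the regions where $KR$ does not decay, which relies on the zero-side exponential decay from Proposition \ref{prop:existence-kink}, and (ii) extracting the contraction smallness from the exponential weight rather than from spatial localization, since the bounded potential induced by the background is $O(1)$ and non-integrable in $x$.
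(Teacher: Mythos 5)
Your route is the paper's own: Theorem \ref{thm:multi-kinks} is proved there precisely by re-running Theorem \ref{thm_N_2}, i.e.\ Proposition \ref{prop_main_1}, with $\profile=KR$ and $\sourceterm=f(KR)-f(K_0)-\sum_{j=1}^{N}f(R_j)-f(K_{N+1})$, and the two points you single out --- that $\sourceterm$ is exponentially small in time \emph{and} lies in $L^2_x$ even though $KR\notin L^2_x$, and that the contraction smallness must come from integrating the weight $e^{-\lambda\tau}$, $\lambda\sim v_\star$, in time rather than from spatial decay --- are exactly what this reduction requires, as is your remark that one only gets $t\ge T_0$.

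There is, however, one step that fails as you describe it, and it is where the infinite-energy background bites hardest: the gradient remainder $\bigl(f_z(KR+\eta)-f_z(KR)\bigr)\nabla KR$, which you claim is controlled ``using only the H\"older regularity of $f'$ \dots\ exactly as in Theorem \ref{thm_N_2}''. In the proof of Proposition \ref{prop_main_1} this is the second term of \eqref{vyy100a}, estimated by H\"older as $\norm{|\eta|^{\beta_1}}_{L^{a'}_\tau L^{(\beta_1+2)/\beta_1}_x}\norm{\nabla\profile}_{L^\infty_\tau L^{\beta_1+2}_x}$, and the finiteness of the last factor rests on the hypothesis $\norm{\nabla\profile}_2+\norm{\nabla\profile}_\infty\le C_2$ in \eqref{v600a1}. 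For the multi-kink that hypothesis is false: $\nabla K=e^{i\theta}\bigl(\tfrac{i}{2}v\phi+\phi'\bigr)$, so $|\nabla K_0|\to\tfrac{|v_0|}{2}\zeta_0\neq0$ as $x\to-\infty$ (similarly for $K_{N+1}$ at $+\infty$), and since the fast-motion hypothesis forces at least one kink to move, $\nabla KR\notin L^p_x$ for any $p<\infty$. When $2\alpha_1<1$, Lemma \ref{lem_6_2} only yields the pointwise bound $|\eta|^{2\alpha_1}|\nabla KR|$, and since all you control on $\eta$ is of $H^1$/Strichartz type, this product lies only in $L^p_x$ for $p\ge 1/\alpha_1>2$; no H\"older pairing can place it in a dual Strichartz space $L^{q'}_\tau L^{r'}_x$, which requires $r'\le 2$, so the estimate for $\norm{\nabla\eta}_{\stt}$ cannot be closed this way. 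The zero-side decay \eqref{eq:exponential-decay} that you invoke for ``every nonlinear remainder'' does not rescue this term, because $\eta$ is not localized near the kink transitions. The missing idea is a splitting according to $\{|KR|\ge\delta_0\}$ versus $\{|KR|<\delta_0\}$: on the first set --- which contains exactly the background regions where $\nabla KR$ fails to decay --- one has $|KR+\theta\eta|\ge\delta_0/2$ (using $\norm{\eta}_{L^\infty_x}\lesssim\norm{\eta}_{H^1}$ exponentially small), so $g\in C^2((0,\infty))$ and \eqref{v100a1} make $f_z$ Lipschitz there and the remainder is $\lesssim C_2|\eta|$, acceptable in $L^1_\tau L^2_x$; on the second set, for $t\ge T_0$ the locally dominant profile is itself $\le 2\delta_0$, hence each kink is on its decaying side there, and $\nabla KR\,\mathbf{1}_{\{|KR|<\delta_0\}}$ is exponentially localized about the traveling centers uniformly in $t$, which restores the $L^2_x\cap L^\infty_x$ control that Proposition \ref{prop_main_1} uses. (If $2\alpha_1\ge1$ there is no issue: the full power $|\eta|$ is available and $\norm{\nabla KR}_\infty$ suffices.) To be fair, the paper's published proof is a two-line reduction that is silent on this point as well; but as a self-contained argument, your proposal needs this additional step.
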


It will be clear from the proof that the theorem remains valid if we
remove $K_0$ or $K_{N+1}$ from the profile $KR$. It is also fine if
$v_0>0$ or $v_{N+1}<0$.

\subsection{Strategy of the proofs}\label{subsec:strategy}

To simplify the presentation, we shall give a streamlined proof to Theorems \ref{thm_inf_1},  \ref{thm_N_1}, \ref{thm_N_2} and \ref{thm:multi-kinks}. The key tools
are Proposition \ref{prop_main} and Proposition \ref{prop_main_1} which reduce matters to the checking
of a few conditions on the solitons. This is done in Section \ref{sec:perturbation}. We stress that the situation
here is a bit different from the usual stability theory in critical
NLS problems (cf. \cite{KiVi08,LiZh11}). There the approximate solutions often have finite
space-time norms and the perturbation errors only need to be small
in some dual Strichartz space. In our case the solitary waves carry
infinite space-time norms on any non-compact time interval (unless
one considers $L_t^\infty$). For this we have to rework a bit the
stability theory around a solitary wave type solution. The price to
pay is that the perturbation errors and source terms need to be
exponentially small in time. This is the main place where the large
relative velocity assumption is used. We give the proofs of Theorems \ref{thm_N_1} and \ref{thm_N_2} in Section \ref{sec:N}, of Theorem \ref{thm_inf_1} in Section \ref{sec:infty} and finally of Theorem \ref{thm:multi-kinks} in Section \ref{sec:kinks}. In Section \ref{sec:T0}, we conclude the paper by giving three results similar to Theorem \ref{thm_N_2} with additional assumptions that allow us to take $T_0=0$.

\section{The perturbation argument}\label{sec:perturbation}

We start this section by giving some

\subsubsection*{Preliminaries and notations}
For any two quantities $A$ and $B$,  we use $A \lesssim B$ (resp. $A
\gtrsim B$ ) to denote the inequality $A \leq CB$ (resp. $A \geq
CB$) for a generic positive constant $C$. The dependence of $C$ on
other parameters or constants is usually clear from the context and
we will often suppress this dependence. Sometimes we will write $A
\lesssim_{k} B$ if the implied constant $C$ depends on the parameter
$k$. We shall use the notation $C=C(X)$ if the constant $C$
depends explicitly on some quantity $X$.

 For any function $f:\; \mathbb R^d\to \mathbb C$, we use $\|f\|_{L^p}$ or 
$\|f\|_p$ to denote the   Lebesgue $L^p$ norm of $f$ for
$1 \le p \le \infty$.
We use $L^q_t L^r_x$ to denote the space-time norm
$$ 
\| u \|_{L^q_t L^r_x(\R \times \R^d)} :=
 \Bigl(\int_\R \Bigl(\int_{\R^d} |u(t,x)|^r\ dx\Bigr)^{q/r}\ dt\Bigr)^{1/q},
$$
with the usual modifications when $q$ or $r$ are equal to infinity,
or when the domain $\R \times \R^d$ is replaced by a smaller region
of space-time such as $I \times \R^d$.  When $q=r$ we abbreviate
$L^q_t L^q_x$ as $L^q_{t,x}$ or $L^q_{tx}$. We shall write $u \in
L_{t,loc}^q L_x^r (\mathbb R \times \mathbb R^d)$ if
\begin{align}
\| u \|_{L_t^q L_x^r (K \times \mathbb R^d)} <\infty, \qquad\text{
for any compact $K \subset \mathbb R$}. \label{def_lqloc}
\end{align}

We shall need the standard dispersive inequality:  for any $2\le p \le \infty$,
\begin{align*}
\| e^{it\Delta} f \|_{p} \lesssim |t|^{-d(\frac 12 -\frac 1p)} \| f\|_{\frac p{p-1}}, \qquad\forall\, t\ne 0.
\end{align*}

The dispersive inequality can be used to deduce certain space-time estimates known
as Strichartz inequalities. Recall that for dimension $d\ge 1$, we say a pair of exponents $(q,r)$ is
\emph{(Schr\"odinger) admissible} if
\begin{align*}
\frac 2 q + \frac d r = \frac d2, \qquad 2\le q,r \le \infty, \; \text{and } (d,q,r)\ne (2,2,\infty).
\end{align*}
For any fixed space-time slab $I\times \mathbb R^d$, we define the Strichartz norm
\begin{align} \label{strichartz_def}
\| u \|_{S(I)} :=  \sup_{\text{$(q,r)$ admissible}} \|u \|_{L_t^q L_x^r (I\times \mathbb R^d)}.
\end{align}
For $d=2$, we need to further impose $q>q_1$ in the above norm for some $q_1$ slightly larger than $2$, so as to stay away from  the forbidden endpoint. The choice of $q_1$ is usually simple.
We use $S(I)$ to denote the closure of all test functions in $\R \times \R^d$ under this norm. We denote by
$N(I)$ the dual space of $S(I)$.

We now state the standard Strichartz estimates. For the non-endpoint case, one can
see for example \cite{GiVe92}. For the end-point case, see \cite{KeTa98}.

\begin{lemma} If $u: \; I \times \mathbb R^d \to
\mathbb C$ solves
\begin{align*}
i\partial_t u +\Delta u =F,\quad u(t_0)=u_0,
\end{align*}
for some $t_0 \in I$, $u_0 \in L_x^2(\mathbb R^d)$. Then
\begin{align}
\|u \|_{S(I)} \lesssim_d \|u_0\|_2 + \| F \|_{N(I)}. \notag
\end{align}
\end{lemma}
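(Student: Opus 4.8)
The plan is to reduce everything to the two standard building blocks --- the homogeneous estimate and the retarded inhomogeneous estimate --- and then unwind the definitions of $S(I)$ and $N(I)$. By Duhamel's formula any solution satisfies
\[
u(t) = e^{i(t-t_0)\Delta}u_0 + i\int_{t_0}^t e^{i(t-s)\Delta}F(s)\,ds,
\]
so since $\|\cdot\|_{S(I)}$ is a supremum of the norms $\|\cdot\|_{L^q_tL^r_x}$ over admissible $(q,r)$, it suffices to bound each of the two pieces in every admissible $L^q_tL^r_x$ norm. Concretely I would establish (i) the homogeneous bound $\|e^{it\Delta}u_0\|_{L^q_tL^r_x}\lesssim\|u_0\|_2$, and (ii) the retarded bound $\|\int_{t_0}^t e^{i(t-s)\Delta}F(s)\,ds\|_{L^q_tL^r_x}\lesssim\|F\|_{L^{\tilde q'}_t L^{\tilde r'}_x}$, for all admissible $(q,r)$ and $(\tilde q,\tilde r)$. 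Taking the supremum over $(q,r)$ in (i) controls $\|e^{it\Delta}u_0\|_{S(I)}$ by $\|u_0\|_2$; for (ii), taking the supremum over $(q,r)$ and then invoking the description of the dual norm $N(I)$ (an infimal-convolution/atomic characterization, namely $\|F\|_{N(I)}$ is comparable to the infimum of $\sum_j\|F_j\|_{L^{\tilde q_j'}_t L^{\tilde r_j'}_x}$ over admissible decompositions $F=\sum_j F_j$) yields the stated inequality.

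For the homogeneous estimate (i) I would run the standard $TT^*$ argument. Writing $Tf = e^{it\Delta}f$, boundedness $T:L^2_x\to L^q_tL^r_x$ is equivalent to boundedness of $TT^*G = \int_\R e^{i(t-s)\Delta}G(s)\,ds$ from $L^{q'}_tL^{r'}_x$ to $L^q_tL^r_x$. The needed kernel bound comes from interpolating (Riesz--Thorin) the dispersive inequality already recorded in the excerpt, $\|e^{i(t-s)\Delta}g\|_r\lesssim|t-s|^{-d(1/2-1/r)}\|g\|_{r'}$, against mass conservation $\|e^{i(t-s)\Delta}g\|_2=\|g\|_2$. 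Since admissibility forces $d(1/2-1/r)=2/q$, the resulting time integral is a fractional integral of order $1-2/q\in(0,1)$ in the non-endpoint range $q>2$, and the Hardy--Littlewood--Sobolev inequality in the time variable closes the estimate.

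The retarded estimate (ii) follows from the same kernel bound applied first to the untruncated operator $\int_\R$, after which the Christ--Kiselev lemma upgrades this to the time-ordered integral $\int_{s<t}$, valid precisely because the exponents are non-endpoint. In dimension $d=1$, and in the truncated range $q>q_1>2$ used when $d=2$ to avoid the forbidden $(2,\infty)$ pair (exactly as flagged after the definition of $S(I)$), this completes the argument with elementary ingredients.

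The main obstacle is the endpoint $q=2$, $r=2d/(d-2)$ in dimension $d\ge 3$: there $1-2/q=0$, the time kernel $|t-s|^{-1}$ is non-integrable, and both Hardy--Littlewood--Sobolev and the Christ--Kiselev lemma fail. This is where I would appeal to the Keel--Tao argument \cite{KeTa98}: a dyadic decomposition of the kernel in $|t-s|$ together with a bilinear real-interpolation estimate establishes the homogeneous endpoint bound and the retarded endpoint-to-endpoint inhomogeneous bound directly, bypassing Christ--Kiselev. Interpolating the endpoint against the trivial $(\infty,2)$ pair then recovers the full admissible range, so that the supremum defining $\|u\|_{S(I)}$ is controlled as claimed, with the implicit constant depending only on $d$.
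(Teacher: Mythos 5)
The paper does not prove this lemma at all: it is quoted as the standard Strichartz estimate, with the non-endpoint case attributed to \cite{GiVe92} and the endpoint case to \cite{KeTa98}. Your sketch is precisely the standard argument behind those citations ($TT^*$ with the dispersive estimate and Hardy--Littlewood--Sobolev in time, Christ--Kiselev to pass to the retarded non-endpoint estimate, Keel--Tao's dyadic/bilinear real-interpolation argument at the endpoint, and the sum-space description of the dual norm $N(I)$), so it is correct and consistent with what the paper invokes.
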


We need a few simple estimates on the nonlinearity. For any
complex-valued function $F=F(z)$, recall the notation
\begin{align*}
F_z:= \frac12 \left( \frac{\partial F}{\partial x} - i \frac{\partial F}{\partial y} \right),
\qquad F_{\bar z}:= \frac12 \left( \frac{\partial F}{\partial x} + i \frac{\partial F}{\partial y} \right).
\end{align*}
If we write $F(z)= F^*(z,\bar z)$ with $z$ and $\bar z$ treated as
independent variables in $F^*$, then $F_z = \frac{\partial
  F^*}{\partial z}$ and $F_{\bar z} = \frac{\partial F^*}{\partial
  \bar z}$.

By the chain rule and Fundamental Theorem of Calculus, it is easy to check that
\begin{align}
\nabla( F(u(x)) ) &= F_z(u(x)) \nabla u(x) + F_{\bar z}(u(x)) \overline{\nabla u(x)}; \notag \\
F(z_1)-F(z_2) &= (z_1-z_2) \int_0^1 F_z(z_2+\theta(z_1-z_2)) d\theta \notag\\
& \qquad+ (\overline{z_1-z_2}) \int_0^1 F_{\bar z} (z_2+ \theta(z_1-z_2)) d\theta.
 \label{v200a2}
\end{align}
These two identities will be used later.

\begin{lemma}[H\"older continuity of $f^{\prime}$ and $g$] \label{lem_6_2}
Let $f(z)=g(|z|^2)z$ for $z\in \mathbb C$ and suppose $g$ satisfy \eqref{v100a1} and \eqref{v100a3}.
 Then for all $ s_1,\,s_2>0$ we have 
\begin{multline}
|g(s_1^2)-g(s_2^2)|+|s_1^2g^{\prime}(s_1^2)-s_2^2 g^{\prime}(s_2^2)|\\
 \lesssim  |s_1-s_2|^{\min\{2\alpha_1,1\}}
(s_1+s_2)^{\max\{2\alpha_1-1,0\}}  \\
 \qquad\qquad+|s_1-s_2|^{\min\{2\alpha_2,1\}}
(s_1+s_2)^{\max\{2\alpha_2-1,0\}};\label{v200a4a}
\end{multline}
and for any $z_1$, $z_2\in \mathbb C$,
\begin{align}
 |f_z(z_1)-f_z(z_2)|+&|f_{\bar z}(z_1)-f_{\bar z}(z_2)| +|g(|z_1|^2)-g(|z_2|^2)| \notag \\
& \lesssim |z_1-z_2|^{\min\{2\alpha_1,1\}} (|z_1|+|z_2|)^{\max\{2\alpha_1-1,0\}} \notag\\
&\qquad\qquad +|z_1-z_2|^{\min\{2\alpha_2,1\}} (|z_1|+|z_2|)^{\max\{2\alpha_2-1,0\}}; \label{v200a4b} \\
|f(z_1)-f(z_2)| & \lesssim |z_1-z_2| \cdot \bigl( (|z_1|+|z_2|)^{2\alpha_1} + (|z_1|+|z_2|)^{2\alpha_2}
\bigr). \label{v200a4c}
\end{align}
\end{lemma}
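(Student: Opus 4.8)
The plan is to prove the three estimates in Lemma \ref{lem_6_2} by reducing everything to the single structural hypothesis \eqref{v100a1}, namely that $|sg'(s)| + |s^2 g''(s)| \lesssim s^{\alpha_1} + s^{\alpha_2}$, together with elementary H\"older-type inequalities. First I would record the two scalar facts I intend to use repeatedly. The first is the ``fundamental theorem of calculus'' bound: for a differentiable $h$ on $(0,\infty)$ and $s_1,s_2>0$,
\[
|h(s_1) - h(s_2)| \le |s_1 - s_2| \sup_{s \in [s_2,s_1]} |h'(s)|.
\]
The second is the interpolation between this Lipschitz bound and the trivial triangle-inequality bound $|h(s_1)-h(s_2)| \le |h(s_1)| + |h(s_2)|$, which is exactly what produces the $\min\{2\alpha_i,1\}$ exponents in the statement: when $2\alpha_i \ge 1$ one uses the derivative bound directly, and when $2\alpha_i < 1$ one interpolates to recover the H\"older exponent $2\alpha_i$ with the compensating power $(s_1+s_2)^{\max\{2\alpha_i-1,0\}}=(s_1+s_2)^0$.

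For \eqref{v200a4a}, I would apply the above to $h_1(s)=g(s)$ and $h_2(s)=sg'(s)$ evaluated at the arguments $s_i^2$. Writing $g(s_1^2)-g(s_2^2)$ via the mean value theorem gives a factor $|s_1^2 - s_2^2| = |s_1-s_2|(s_1+s_2)$ times $|g'(\xi)|$ for some intermediate $\xi \in [s_2^2, s_1^2]$; since $\xi \lesssim (s_1+s_2)^2$ and \eqref{v100a1} gives $|g'(\xi)| \lesssim \xi^{-1}(\xi^{\alpha_1}+\xi^{\alpha_2})$, the powers assemble to $(s_1+s_2)^{2\alpha_i - 1}$, and after multiplying by $(s_1+s_2)$ and applying the interpolation trick to pass from exponent $1$ to exponent $\min\{2\alpha_i,1\}$ in $|s_1-s_2|$ one lands on the claimed right-hand side. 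The term $s^2 g'(s^2)$ is handled the same way, differentiating to get $g'(s^2)+2s^2 g''(s^2)$ — here the hypothesis \eqref{v100a1} is used in its full strength, controlling both $sg'$ and $s^2 g''$, so that the combined derivative of $s\mapsto s^2 g'(s^2)$ obeys the same $\xi^{\alpha_1}+\xi^{\alpha_2}$ bound after accounting for the chain-rule factors.

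For \eqref{v200a4b}, I would first compute $f_z$ and $f_{\bar z}$ for $f(z)=g(|z|^2)z$. A direct computation gives $f_z(z) = g(|z|^2) + |z|^2 g'(|z|^2)$ and $f_{\bar z}(z) = z^2 g'(|z|^2)$. The point is that each of these is built from the scalar functions $g(s)$ and $sg'(s)$ of $s=|z|^2$, multiplied by smooth bounded-modulus factors ($1$, or $z^2/|z|^2$). Thus I would reduce the differences $|f_z(z_1)-f_z(z_2)|$ and $|f_{\bar z}(z_1)-f_{\bar z}(z_2)|$ to (i) differences of the scalar quantities $g$ and $sg'$ at $|z_1|^2, |z_2|^2$, already bounded by \eqref{v200a4a} with $s_i = |z_i|$ and using $\big||z_1|-|z_2|\big| \le |z_1-z_2|$, plus (ii) differences coming from the oscillatory factor $z^2/|z|^2$ in $f_{\bar z}$, which are controlled by $|z_1 - z_2|$ times a bounded-coefficient estimate; one just needs to check that the phase factor contributes a benign Lipschitz-type term that is absorbed into the stated bound. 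The summand $|g(|z_1|^2)-g(|z_2|^2)|$ is immediate from \eqref{v200a4a}.

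For \eqref{v200a4c}, I would use the integral identity \eqref{v200a2} to write $f(z_1)-f(z_2)$ as $(z_1-z_2)\int_0^1 f_z d\theta + \overline{(z_1-z_2)}\int_0^1 f_{\bar z}d\theta$, so it suffices to bound $|f_z(z)| + |f_{\bar z}(z)| \lesssim |z|^{2\alpha_1} + |z|^{2\alpha_2}$ uniformly along the segment $z=z_2+\theta(z_1-z_2)$ and then note $|z| \lesssim |z_1|+|z_2|$. From the formulas above, $|f_z|+|f_{\bar z}| \lesssim |g(|z|^2)| + |z|^2|g'(|z|^2)|$; the second term is $\lesssim |z|^{2\alpha_1}+|z|^{2\alpha_2}$ directly by \eqref{v100a1}, and the first follows by integrating $|g'|$ from $0$ (using $g(0)=0$) to get $|g(s)| \lesssim s^{\alpha_1}+s^{\alpha_2}$, i.e. $|g(|z|^2)| \lesssim |z|^{2\alpha_1}+|z|^{2\alpha_2}$. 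Integrating over $\theta \in [0,1]$ and pulling out the factor $|z_1-z_2|$ completes it. I expect the main obstacle to be bookkeeping rather than conceptual: carefully tracking the two families of exponents $\alpha_1,\alpha_2$ through the $\max$/$\min$ gymnastics, and in particular verifying the interpolation step that converts the Lipschitz exponent $1$ in $|s_1-s_2|$ into $\min\{2\alpha_i,1\}$ while keeping the companion power of $(s_1+s_2)$ nonnegative and correctly sized in the small-exponent regime $2\alpha_i<1$.
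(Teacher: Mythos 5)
Your overall architecture mirrors the paper's: scalar estimates for $g$ and $s g'(s)$ derived from \eqref{v100a1}, the explicit formulas $f_z(z)=g(|z|^2)+|z|^2g'(|z|^2)$ and $f_{\bar z}(z)=g'(|z|^2)z^2$, and the identity \eqref{v200a2} combined with pointwise bounds on $f_z,f_{\bar z}$ for \eqref{v200a4c} (that last part is fine and matches the paper). However, two of your steps fail as written, and they occur at precisely the two delicate points of the lemma. For \eqref{v200a4a}, your mean-value step bounds $|g'(\xi)|\lesssim \xi^{\alpha_i-1}$ at an intermediate point $\xi\in[s_2^2,s_1^2]$ and then replaces $\xi^{\alpha_i-1}$ by $(s_1+s_2)^{2(\alpha_i-1)}$ ``since $\xi\lesssim(s_1+s_2)^2$''. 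That replacement is only legitimate when $\alpha_i\ge 1$, because for $\alpha_i<1$ the map $\xi\mapsto\xi^{\alpha_i-1}$ is decreasing and $\xi$ can be as small as $\min(s_1,s_2)^2$, which admits no lower bound in terms of $s_1+s_2$. This regime is not a corner case: for $d\ge 4$ the hypothesis forces $\alpha_i<\alpha_{\max}/2\le 1$, so every admissible nonlinearity falls into it. Moreover your dividing line is misplaced: you claim the derivative bound applies ``directly'' when $2\alpha_i\ge 1$, but the obstruction is $\alpha_i<1$, not $2\alpha_i<1$ (e.g.\ $\alpha_i=0.6$ breaks the sup-based argument even though $2\alpha_i>1$). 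A two-case analysis (trivial bound when $\min(s_1,s_2)\lesssim|s_1-s_2|$, mean value theorem otherwise) can repair all regimes, but the clean fix --- and the paper's route --- is to never take a supremum: integrate the derivative bound, $|h(s_1)-h(s_2)|\le\int_{s_2}^{s_1}|h'|\lesssim |s_1^{2\alpha_1}-s_2^{2\alpha_1}|+|s_1^{2\alpha_2}-s_2^{2\alpha_2}|$, and then invoke the elementary power-difference inequality $|a^\beta-b^\beta|\lesssim|a-b|^{\min\{\beta,1\}}(a+b)^{\max\{\beta-1,0\}}$.

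The second gap is in your treatment of $f_{\bar z}$ in \eqref{v200a4b}: you assert that the oscillatory factor $z^2/|z|^2$ contributes ``a benign Lipschitz-type term''. It does not: $z^2/|z|^2$ is not Lipschitz near $z=0$, since $|z_1-z_2|$ can be arbitrarily small while the phases differ by $O(1)$. What actually saves the estimate is the interplay with the vanishing amplitude: writing $z_j=\rho_je^{i\theta_j}$ with $|\theta_1-\theta_2|\le\pi$, one has $|e^{2i\theta_1}-e^{2i\theta_2}|\lesssim|\sin\frac{\theta_1-\theta_2}{2}|\lesssim |z_1-z_2|/(\rho_1+\rho_2)$, and the loss $1/(\rho_1+\rho_2)$ must then be absorbed by the factor $\rho_2^2|g'(\rho_2^2)|\lesssim\rho_2^{2\alpha_1}+\rho_2^{2\alpha_2}$, using also $|z_1-z_2|\le\rho_1+\rho_2$ in the H\"older regime. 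This quantitative mechanism is exactly the content of the paper's polar-coordinates step (the equivalence $|z_1-z_2|\sim|\rho_1-\rho_2||\cos\frac{\theta_1-\theta_2}{2}|+(\rho_1+\rho_2)|\sin\frac{\theta_1-\theta_2}{2}|$, followed by estimating real and imaginary parts separately); your sketch omits the one idea that makes the claim true.
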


\begin{proof}[Proof of Lemma \ref{lem_6_2}]
By \eqref{v100a1}, we get for any $s>0$,
\begin{align*}
|(s^2 g^{\prime}(s^2))^{\prime}| & \lesssim |sg^{\prime}(s^2)| + |s^3g^{\prime\prime}(s^2)|
 \lesssim s^{2\alpha_1-1}+s^{2\alpha_2-1}.
\end{align*}
Clearly for any $s_1, \, s_2> 0$, using the above estimate, we have
\begin{align*}
| s_1^2g^{\prime}(s_1^2)-s_2^2 g^{\prime}(s_2^2)|
& \lesssim  |s_1^{2\alpha_1} -s_2^{2\alpha_1}| + |s_1^{2\alpha_2} -s_2^{2\alpha_2} | \\
& \lesssim {\textstyle \sum_{k=1}^2} |s_1-s_2|^{\min\{2\alpha_k,1\}} (s_1+s_2)^{\max\{2\alpha_k-1,0\}}.
\end{align*}
The estimate for $g(s^2)$ is similar. Therefore \eqref{v200a4a} follows.
Observe that
\begin{align*}
f_z(z)=g^{\prime}(|z|^2)|z|^2 + g(|z|^2), \qquad f_{\bar z}(z)=g^{\prime}(|z|^2) z^2. 
\end{align*}
Obviously \eqref{v200a4b} holds for $g(|z|^2)$ and $f_z(z)$ using \eqref{v200a4a}.
For $f_{\bar z}(z)$, the estimate is similar: Let $z_1=\rho_1e^{i\theta_1}$, $z_2=\rho_2e^{i\theta_2}$, with $|\theta_1-\theta_2|\le \pi$.
One just need to note that
\begin{align*}
|f_{\bar z}(z_1)-f_{\bar z}(z_2)| = |g^{\prime}(\rho_1^2) \rho_1^2 e^{i(\theta_1-\theta_2)} - g^{\prime}(\rho_2^2)
\rho_2^2 e^{i(\theta_2-\theta_1)}|,
\end{align*}
and $|z_1-z_2| \sim |\rho_1-\rho_2| |\cos (\frac {\theta_1-\theta_2}2)| + (\rho_1+\rho_2)|\sin (\frac {\theta_1-\theta_2}2)|$.
Estimating the real and imaginary parts separately gives the result. 
Finally \eqref{v200a4c} follows from \eqref{v200a2} and \eqref{v200a4b}.
\end{proof}

With the preliminaries and notations out of the way, we now turn to the main matter of this section.

To prove our results,  we shall state and prove a general proposition on the solvability of
NLS around an approximate solution profile with exponentially decaying source terms. This proposition
is very useful in that it reduces the construction of multi-soliton solutions to the verification
of only a few conditions (see \eqref{asp_f_1a} and \eqref{asp_f_1c} below). To simplify numerology
we shall first deal with the pure power nonlinearity case.

\begin{proposition} \label{prop_main}
Let $0<\alpha<\alpha_{\max}$.
Let $\sourceterm=\sourceterm(t,x):\,[0,\infty)\times \R^d \to \mathbb C$,  $\profile=\profile(t,x):\,[0,\infty)\times \R^d \to \mathbb C$
be given functions  which satisfy for some
 $C_1>0$, $\lambda>0$:
\begin{align} \label{asp_f_1a}
\|\profile(t) \|_{\alpha+2}+    e^{\lambda t} \|\sourceterm(t)\|_{\frac{\alpha+2}{\alpha+1} }  \le C_1 , \qquad\forall\, t\ge 0.
\end{align}
Let $f_1(z)=|z|^\alpha z$ and consider
the equation
\begin{align}
\eta(t)= i \int_t^{\infty} e^{i(t-\tau)\Delta} \Bigl( f_1(\profile+\eta)
-f_1(\profile) + \sourceterm \Bigr)(\tau)\, d\tau. \label{17_g_100}
\end{align}

There exists a constant $\lambda_*=\lambda_*(\alpha,d,C_1)>0$ sufficiently large such that if
$\lambda\ge \lambda_*$ then the following holds:
\begin{itemize}
\item There exists a unique solution $\eta$ to \eqref{17_g_100} satisfying
\begin{align}
\|\eta(t)\|_{\alpha+2} \le C_1 e^{-\lambda t}, \qquad \forall\, t\ge 0. \label{17_g_102ddd}
\end{align}

\item All ($L^2$ level) Strichartz norms of $\eta$ are finite and  decay exponentially, i.e.
\begin{align}
\|\eta\|_{S([t,\infty))} \lesssim e^{-\lambda t}, \qquad \forall\, t\ge 0. \label{17_g_102}
\end{align}

\item If in addition to \eqref{asp_f_1a}, $(\sourceterm, \profile)$ also satisfies for some $C_2>0$:
\begin{align} \label{asp_f_1c}
\|\nabla  \profile(t) \|_{\alpha+2} + e^{\lambda t} \|\nabla  \sourceterm(t) \|_{\frac{\alpha+2}{\alpha+1}} \le C_2, \quad\forall\, t\ge 0,
\end{align}
then $\eta\in L^\infty_t H^1_x$, and for some $C_3=C_3(d,\alpha,C_1)>0$, 
\begin{align} \label{17_g_102bbb}
\|\nabla \eta(t) \|_{\alpha+2}+ \|\nabla \eta\|_{S([t,\infty))} \le C_3C_2 e^{- \min\{\alpha,1\} \lambda t}, \quad\forall \, t\ge 0.
\end{align}
Here both $C_3$ and $\lambda_*$ are independent of $C_2$.
\end{itemize}
\end{proposition}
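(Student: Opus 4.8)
The plan is to solve \eqref{17_g_100} by a contraction argument in a function space whose norm carries an $e^{\lambda t}$ weight, exploiting the fact that for large $\lambda$ the combination of exponentially decaying data with an integrable dispersive kernel produces a small Lipschitz constant. Fix the admissible pair $(q_0,\alpha+2)$ determined by $\frac{2}{q_0}+\frac{d}{\alpha+2}=\frac d2$, so that $\frac{2}{q_0}=\frac{d\alpha}{2(\alpha+2)}\in(0,1)$ precisely because $0<\alpha<\alpha_{\max}$; its dual is $(q_0',\frac{\alpha+2}{\alpha+1})$. Consider the complete metric space $B=\{\eta:\ \sup_{t\ge0}e^{\lambda t}\|\eta(t)\|_{\alpha+2}\le C_1\}$ and the map $\Psi(\eta)(t)=i\int_t^{\infty}e^{i(t-\tau)\Delta}\bigl(f_1(\profile+\eta)-f_1(\profile)+\sourceterm\bigr)(\tau)\,d\tau$. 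Using the elementary bound $|f_1(\profile+\eta)-f_1(\profile)|\lesssim|\eta|(|\profile|+|\eta|)^\alpha$ (cf.\ \eqref{v200a4c}) and H\"older in $x$, I get $\|(f_1(\profile+\eta)-f_1(\profile))(\tau)\|_{\frac{\alpha+2}{\alpha+1}}\lesssim\|\eta(\tau)\|_{\alpha+2}(\|\profile(\tau)\|_{\alpha+2}+\|\eta(\tau)\|_{\alpha+2})^\alpha$, which together with \eqref{asp_f_1a} is $\lesssim(C_1^{\alpha+1}+C_1)e^{-\lambda\tau}$. Applying the dispersive inequality inside the Duhamel integral gives $\|\Psi(\eta)(t)\|_{\alpha+2}\lesssim\int_t^\infty|t-\tau|^{-2/q_0}\|(\cdots)(\tau)\|_{\frac{\alpha+2}{\alpha+1}}\,d\tau\lesssim(C_1^{\alpha+1}+C_1)\,\lambda^{\frac{2}{q_0}-1}\Gamma(1-\tfrac{2}{q_0})\,e^{-\lambda t}$. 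Since $\frac{2}{q_0}-1<0$, the prefactor tends to $0$ as $\lambda\to\infty$; choosing $\lambda_*=\lambda_*(\alpha,d,C_1)$ so large that it is $\le C_1$ shows $\Psi(B)\subseteq B$, and the same computation applied to the difference $f_1(\profile+\eta_1)-f_1(\profile+\eta_2)$, bounded by $|\eta_1-\eta_2|(|\profile|+|\eta_1|+|\eta_2|)^\alpha$, yields a contraction in the metric $\sup_t e^{\lambda t}\|\eta_1(t)-\eta_2(t)\|_{\alpha+2}$. Banach's fixed point theorem then gives a unique $\eta\in B$; as \eqref{17_g_102ddd} is exactly the defining condition of $B$, this is precisely the asserted existence and uniqueness.

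The Strichartz bound \eqref{17_g_102} follows a posteriori: applying the (retarded) Strichartz estimate to \eqref{17_g_100}, together with the dual pair $(q_0',\frac{\alpha+2}{\alpha+1})$ and the already-established bound $\|\eta(\tau)\|_{\alpha+2}\le C_1e^{-\lambda\tau}$, gives $\|\eta\|_{\stt}\lesssim\|f_1(\profile+\eta)-f_1(\profile)+\sourceterm\|_{\ntt}\lesssim(q_0'\lambda)^{-1/q_0'}(C_1^{\alpha+1}+C_1)e^{-\lambda t}\lesssim e^{-\lambda t}$, where once more the exponential decay of the data converts the $L^{q_0'}_\tau$ integral over $[t,\infty)$ into the small large-$\lambda$ factor.

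For the gradient bound I would differentiate \eqref{17_g_100} and use the chain rule \eqref{v200a2} to split $\nabla(f_1(\profile+\eta)-f_1(\profile))$ into a \emph{good} term $(f_1)_z(\profile+\eta)\nabla\eta$ (plus its $\overline{\nabla\eta}$ counterpart), linear in $\nabla\eta$ with coefficient $\lesssim(|\profile|+|\eta|)^\alpha$, and a \emph{source} term $[(f_1)_z(\profile+\eta)-(f_1)_z(\profile)]\nabla\profile$ (plus conjugate), to which I apply the H\"older continuity $|(f_1)_z(\profile+\eta)-(f_1)_z(\profile)|\lesssim|\eta|^{\min\{\alpha,1\}}(|\profile|+|\eta|)^{\max\{\alpha-1,0\}}$ from \eqref{v200a4b}. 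The source term is then controlled by $\|\nabla\profile\|_{\alpha+2}\,\|\eta\|_{\alpha+2}^{\min\{\alpha,1\}}(\cdots)\lesssim C_2e^{-\min\{\alpha,1\}\lambda\tau}$, and it is exactly this H\"older exponent that forces the slower rate $\min\{\alpha,1\}$ in \eqref{17_g_102bbb}. Running the dispersive and Strichartz estimates on the iterates $\eta_{n+1}=\Psi(\eta_n)$, with $M_n:=\sup_t e^{\min\{\alpha,1\}\lambda t}\|\nabla\eta_n(t)\|_{\alpha+2}$, the good term contributes $\le\frac12 M_n$ for $\lambda\ge\lambda_*$ (the same threshold, independent of $C_2$) while the source and $\nabla\sourceterm$ contribute $\lesssim C_1^\alpha C_2$; hence $M_{n+1}\le\frac12 M_n+\frac12 C_3C_2$, and since $M_0=0$ we obtain the uniform bound $M_n\le C_3C_2$. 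Passing to the limit along $\eta_n\to\eta$ (the weighted norms are lower semicontinuous) yields \eqref{17_g_102bbb}, and $\eta\in L^\infty_tH^1_x$ follows since $(\infty,2)$ is admissible, so $\eta,\nabla\eta\in L^\infty_tL^2_x$.

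The main obstacle I anticipate is in the gradient estimate. First, one must establish a priori finiteness of the weighted $\nabla\eta$ norm before bootstrapping, which is why I run the estimate on the iterates and pass to the limit rather than assuming $M_1<\infty$ at the outset. Second, obtaining the sharp exponent $\min\{\alpha,1\}$ is delicate: it emerges only from carefully separating the cases $\alpha\ge1$ and $\alpha<1$ when the H\"older continuity of $f_1'$ hits $\nabla\profile$, and one must check that both $C_3$ and $\lambda_*$ stay independent of $C_2$ — which holds because $C_2$ enters only linearly, through the source terms, and never multiplies $\nabla\eta$ in the good term.
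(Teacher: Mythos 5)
Your proposal is correct and follows essentially the same route as the paper: a contraction in the exponentially weighted ball $\sup_{t}e^{\lambda t}\|\eta(t)\|_{\alpha+2}\le C_1$ using the dispersive estimate with kernel exponent $\theta=d(\tfrac12-\tfrac1{\alpha+2})<1$, Strichartz a posteriori for \eqref{17_g_102}, and the chain-rule splitting with the H\"older continuity of $\partial_z f_1$ producing the $\min\{\alpha,1\}$ rate, with $\lambda_*$, $C_3$ independent of $C_2$ since $C_2$ enters only linearly through source terms. The only (harmless) deviations are your regrouping of the gradient identity (coefficient difference hitting $\nabla\profile$ rather than $\nabla(\profile+\eta)$) and your explicit iteration-plus-lower-semicontinuity argument for \eqref{17_g_102bbb}, which is a more carefully justified version of the paper's implicit claim that the fixed point inherits the $B_1$ bound.
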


\begin{proof}[Proof of Proposition \ref{prop_main}]
We write \eqref{17_g_100} as $\eta=V\eta$. We shall show that for $\lambda$ sufficiently large,
 $V$ is a contraction in the ball
\begin{align*}
B=\left\{ \eta:   \; \|\eta\|_{\tilde X}:=\Bigl\| e^{\lambda t} \| \eta(t) \|_{\alpha+2} \Bigr\|_{L_t^{\infty}([0,\infty))} \le C_1 \right\}.
\end{align*}

We first check that $V$ maps $B$ into $B$. Denote
\[
\theta:= d \left(\frac 12-\frac 1{\alpha+2}\right).
\]
It is easy to check that $0<\theta<1$ since by assumption $0<\alpha<\alpha_{\max}$. By the simple inequality
\begin{align}
|f_1(z_1)-f_1(z_2)| \lesssim |z_1-z_2| \cdot( |z_1|^\alpha +|z_2|^\alpha),\qquad \forall\, z_1,\,z_2\in \mathbb C
\label{00t0atmp13}
\end{align}
we have
\begin{align}
|f_1(\profile+\eta)-f_1(\profile)| \lesssim |\eta| \cdot (|\profile|^\alpha + |\eta|^\alpha). \label{00t0atmp9}
\end{align}

 By using the dispersive estimate,
 the assumptions on ($\profile$, $\sourceterm$) and \eqref{00t0atmp9},  we have
\begin{align}
&\| \eta(t) \|_{\alpha+2} \notag\\
& \le C \int_t^{\infty} |t-\tau|^{-\theta}
\Bigl( \| |\profile(\tau)|^\alpha |\eta(\tau)| \|_{\frac{\alpha+2}{\alpha+1}}+ \| |\eta(\tau)|^{\alpha+1} \|_{\frac{\alpha+2}{\alpha+1}}
+ \| \sourceterm(\tau) \|_{\frac{\alpha+2}{\alpha+1}} \Bigr) d\tau \notag \\
& \le C \int_t^{\infty} |t-\tau|^{-\theta}
\Bigl( \| \profile(\tau)\|^\alpha_{\alpha+2}  \|\eta(\tau) \|_{\alpha+2}+ \| \eta(\tau)\|_{\alpha+2}^{\alpha+1}
+ \| \sourceterm(\tau) \|_{\frac{\alpha+2}{\alpha+1}} \Bigr) d\tau \notag \\
& \le C \int_t^{\infty} |t-\tau|^{-\theta} \Bigl(
C_1^\alpha C_1 e^{-\lambda \tau} + C_1^{\alpha+1} e^{-\lambda(\alpha+1)\tau} + C_1 e^{-\lambda \tau} \Bigr) d\tau \notag \\
&\le C C_1 e^{-\lambda t} I_1,
\label{00t0atmp12}
\end{align}
where $C=C(d,\alpha)$ and ($\tilde \tau = \tau -t$)
\[
I_1 = C_1^\alpha \int_0^{\infty} (\tilde \tau)^{-\theta} e^{-\lambda
  \tilde \tau} d\tilde \tau + C_1^\alpha \int_0^{\infty} (\tilde
\tau)^{-\theta} e^{-\lambda(\alpha+1)\tilde \tau} d \tilde \tau +
\int_0^{\infty} (\tilde \tau)^{-\theta} e^{-\lambda \tilde \tau}
d\tilde \tau.
\]
It is not difficult to check that for $\lambda$ sufficiently large
\begin{align*}
CI_1 \le \frac{C(C_1,d,\alpha)}{\lambda^{1-\theta}} \le 1.
\end{align*}
 Hence $\|\eta(t)\|_{\alpha+2} \le C_1 e^{-\lambda t}$ and $V$ maps $B$ to $B$.
By using \eqref{00t0atmp13}
and a similar estimate as in \eqref{00t0atmp12}, we can also show that for any $\eta_1\in B$, $\eta_2\in B$,
\begin{align*}
\| (V\eta_1)(t) - (V\eta_2)(t) \|_{\tilde X} \le \frac 12  \| \eta_1 -\eta_2\|_{\tilde X}.
\end{align*}
This completes the proof that $V$ is a contraction on $B$.

Next \eqref{17_g_102} is a simple consequence of the Strichartz estimate. Denote by $a$ the number
such that $\frac 2a + \frac d {\alpha+2} = \frac d 2$. It is easy to check that $2<a<\infty$ since $0<\alpha<\alpha_{\max}$.
By \eqref{00t0atmp13} and Strichartz estimate, we have
\begin{align}
\| \eta \|_{S([t,\infty))}& \lesssim \| f_1(\profile+\eta) -f_1(\profile) \|_{L_{\tau}^{\frac a {a-1}} L_x^{\frac{\alpha+2}{\alpha+1}}
([t,\infty))}+ \| \sourceterm \|_{L_{\tau}^{\frac a {a-1}} L_x^{\frac{\alpha+2}{\alpha+1}}
([t,\infty))} \notag \\
& \lesssim \| |\eta| \cdot (|\profile|^{\alpha}+|\eta|^\alpha) \|_{L_{\tau}^{\frac a {a-1}} L_x^{\frac{\alpha+2}{\alpha+1}}
([t,\infty))}+ \| \sourceterm \|_{L_{\tau}^{\frac a {a-1}} L_x^{\frac{\alpha+2}{\alpha+1}}
([t,\infty))} \notag \\
& \lesssim \|\profile\|^\alpha_{L_{\tau}^\infty L_x^{\alpha+2}([0,\infty))} \cdot \| \eta \|_{L_\tau^{\frac{a}{a-1}}
L_x^{\alpha+2} ([t,\infty))} \notag\\
&\qquad+ \| \eta\|^{\alpha+1}_{L_{\tau}^{\frac{(\alpha+1)a}{a-1}} L_x^{\alpha+2}([t,\infty))}+
\| \sourceterm \|_{L_{\tau}^{\frac a {a-1}} L_x^{\frac{\alpha+2}{\alpha+1}}
([t,\infty))} \notag \\
& \lesssim e^{-\lambda t}, \qquad\forall\ t\ge 0. \label{tmp_100aaabbb}
\end{align}

Finally to show \eqref{17_g_102bbb}, we first prove that $V$ maps $B_1$ into $B_1$ where
\begin{align*}
B_1=B\bigcap \left\{ \eta:   \;  \sup_{t\ge 0} \Bigl( e^{  \min\{\alpha,1\}  \lambda t} \| \nabla \eta (t)
 \|_{\alpha+2} \Bigr) \le C_2 \right\}.
\end{align*}

We start with the identity
\begin{align}
&\nabla \bigl( f_1(\profile+\eta) - f_1(\profile) \bigr)  \notag \\
=&\;
\bigl( (\partial_z f_1)(\profile+\eta) -( \partial_z f_1)(\profile) \bigr) \nabla (\profile+\eta)
+(\partial_z f_1)(\profile) \nabla \eta \notag \\
&\; + \bigl( (\partial_{\bar z} f_1)(\profile+\eta) -( \partial_{\bar z} f_1)(\profile) \bigr) \overline{\nabla (\profile+\eta)}
+(\partial_{\bar z} f_1)(\profile) \overline{\nabla \eta}. \label{F1identity}
\end{align}

Note that for $0<\alpha\le 1$,
\begin{align*}
|(\partial_z f_1)(z_1) - (\partial_z f_1)(z_2) | \lesssim |z_1-z_2|^\alpha, \qquad \forall\, z_1, \, z_2 \in \mathbb C,
\end{align*}
and for $\alpha>1$,
\begin{align*}
| (\partial_z f_1) (z_1) - (\partial_z f_1)(z_2) | \lesssim
(|z_1|^{\alpha-1} +|z_2|^{\alpha-1} ) |z_1-z_2|, \qquad \forall\, z_1,\, z_2 \in \mathbb C.
\end{align*}

Therefore
\begin{align} \label{17_g_301}
|\nabla (f_1(\profile+\eta) -f_1(\profile) ) |
\lesssim
\begin{cases}
|\eta|^\alpha |\nabla (\profile+\eta) | +|\profile|^\alpha |\nabla \eta|, \quad \text{if $0<\alpha\le 1$}, \\
(|\eta|^{\alpha-1}+|\profile|^{\alpha-1}) |\eta| |\nabla(\profile+\eta) |+|\profile|^\alpha |\nabla \eta|,\quad
\text{if $\alpha>1$}
\end{cases}
\end{align}

 For simplicity we shall only discuss the case
$0<\alpha\le 1$. The argument for $\alpha>1$ is similar (even simpler) and will be omitted.
By using \eqref{17_g_301} ,
\eqref{17_g_102ddd}, \eqref{asp_f_1c},
and the dispersive
inequality, we have for $t\ge 0$:
\begin{align*}
\|\nabla \eta(t) \|_{\alpha+2}
& \lesssim_{d,\alpha}  \int_t^{\infty} |t-\tau|^{-\theta} \Bigl(
\| |\eta|^\alpha |\nabla(\profile +\eta) | \|_{\frac{\alpha+2}{\alpha+1}} + \| |\profile|^\alpha \nabla \eta \|_{\frac{\alpha+2}{\alpha+1}}
+\| \nabla \sourceterm \|_{\frac{\alpha+2}{\alpha+1}} \Bigr) d\tau \notag \\
& \lesssim_{d,\alpha} \int_t^{\infty} |t-\tau|^{-\theta} \Bigl( \|\eta\|_{\alpha+2}^\alpha (\| \nabla \profile\|_{\alpha+2} +\| \nabla \eta \|_{\alpha+2} )\notag\\
&\hspace{6cm}+\|\profile\|_{\alpha+2}^\alpha \| \nabla \eta \|_{\alpha+2} + \| \nabla \sourceterm\|_{\frac{\alpha+2}{\alpha+1}}  \Bigr)d\tau \notag\\
& \lesssim_{d,\alpha, C_1}  C_2 \int_t^{\infty} |t-\tau|^{-\theta} e^{-\lambda \alpha \tau} d\tau
 +C_2 \int_t^{\infty} |t-\tau|^{-\theta} e^{-\lambda \tau} d\tau \notag \\
& \lesssim_{d,\alpha,C_1} C_2 \int_t^{\infty}  |t-\tau|^{-\theta} e^{-\lambda \alpha \tau} d\tau \notag \\
& \le C_2 e^{-\lambda \alpha t} \cdot  C(d,\alpha,C_1)\int_0^{\infty} |\tilde \tau|^{-\theta} e^{-\lambda \alpha \tilde \tau} d\tilde \tau \notag \\
& = C_2 e^{-\lambda \alpha t} \cdot C(d,\alpha,C_1) \cdot (\lambda \alpha)^{-(1-\theta)} \int_0^{\infty}  |\tilde \tau|^{-\theta} e^{- \tilde \tau} d\tilde \tau.
\end{align*}
Now if we take $\lambda\ge \lambda_*$ and $\lambda_*=
\lambda_*(d,\alpha,C_1)$ is independent of $C_2$ and sufficiently
large such that
\begin{equation}
\label{prop23-H1small}
C(d,\alpha,C_1) \cdot (\lambda_* \alpha)^{-(1-\theta)} \int_0^{\infty}
|\tilde \tau|^{-\theta} e^{- \tilde \tau} d\tilde \tau \le \frac 12,
\end{equation}
then clearly
\begin{align*}
\| \nabla \eta(t) \|_{\alpha+2} \le C_2 e^{-\lambda \alpha t}, \qquad \forall\, t \ge 0.
\end{align*}
By a similar argument, we also obtain for the case $\alpha>1$,
\begin{align*}
\| \nabla \eta(t) \|_{\alpha+2} \le C_2 e^{-\lambda t }, \qquad \forall\, t \ge 0.
\end{align*}
Hence we have proved that $V$  maps $B_1$ to $B_1$.  Since $V$ is a contraction on $B$ and maps $B_1$ into $B_1$, it is obvious that  we have
constructed the solution satisfying
\begin{align} \label{18_rl_10a}
\| \nabla \eta (t) \|_{\alpha+2} \le C_2 e^{-\lambda \min\{\alpha,1\} t}, \qquad \forall\, t\ge 0.
\end{align}

It remains for us to bound the Strichartz norm $\| \nabla \eta(t) \|_{S([t,\infty))}$.
The argument is  similar to that in \eqref{tmp_100aaabbb}. Let $a$ be the same number such that
$\frac 2 a+ \frac d{\alpha+2} = \frac d 2$. By \eqref{17_g_301} and Strichartz, we have
\begin{align}
\| \nabla \eta \|_{S([t,\infty))} & \lesssim_d  \Bigl\| |\eta|^\alpha |\nabla (\profile+\eta) | \Bigr\|_{N([t,\infty))}
+ \Bigl\| |\profile|^\alpha |\nabla \eta | \Bigr\|_{N([t,\infty))} + \| \nabla \sourceterm \|_{N([t,\infty))} \notag \\
& \lesssim_d  \Bigl\| |\eta|^\alpha |\nabla \profile | \Bigr\|_{L_{\tau}^{\frac a{a-1}} L_x^{\frac{\alpha+2}{\alpha+1}}([t,\infty))}
+ \Bigl\| |\eta|^\alpha |\nabla \eta| \Bigr\|_{L_{\tau}^{\frac{a}{a-1}}  L_x^{\frac{\alpha+2}{\alpha+1}} ([t,\infty))}  \notag \\
& \qquad + \| |\profile|^\alpha |\nabla \eta |  \|_{L_{\tau}^{\frac a{a-1}} L_x^{\frac{\alpha+2}{\alpha+1}} ([t,\infty))} +
\|\nabla \sourceterm \|_{L_{\tau}^{\frac a{a-1}} L_x^{\frac{\alpha+2}{\alpha+1}}([t,\infty))} \notag \\
& \lesssim_d  \Bigl\|  |\eta|^\alpha  \Bigr\|_{L_{\tau}^{\frac a {a-1}} L_x^{\frac{\alpha+2}\alpha}([t,\infty))} 
 \Bigl\| |\nabla \profile| + | \nabla \eta | \Bigr\|_{L_{\tau}^{\infty} L_x^{\alpha+2} ([t,\infty))} \notag \\
&\hspace{2cm} + \| |\profile|^\alpha \|_{L_{\tau}^{\infty} L_x^{\frac {\alpha+2}\alpha}([t,\infty))} \|  \nabla \eta \|_{L_{\tau}^{\frac a{a-1}} L_x^{\alpha+2}([t,\infty))} \notag \\
&\hspace{4cm}+ \|\nabla \sourceterm \|_{L_{\tau}^{\frac a{a-1}} L_x^{\frac{\alpha+2}{\alpha+1}}([t,\infty))}. \label{21_10a}
\end{align}
By \eqref{17_g_102ddd},  we have
\begin{align*}
\Bigl\| |\eta|^\alpha \Bigr\|_{L_{\tau}^{\frac a{a-1}} L_x^{\frac {\alpha+2}\alpha} ([t,\infty))} &
\le \Bigl\| \|\eta\|_{\alpha+2}^\alpha \Bigr\|_{L_{\tau}^{\frac{a}{a-1}} ([t,\infty))} \notag \\
& \le C_1^\alpha \Bigl( \int_t^{\infty} e^{-\lambda \alpha \frac {a\tau} {a-1}} d\tau \Bigr)^{\frac{a-1}a} \notag \\
& \le C_1^\alpha \cdot \left( \lambda \alpha \frac a {a-1} \right)^{-\frac {a-1}a} \cdot e^{-\lambda \alpha t}.
\end{align*}
Plugging the above estimates into \eqref{21_10a} and using \eqref{asp_f_1c}, \eqref{18_rl_10a}, we obtain
\begin{align*}
\| \nabla \eta \|_{S([t,\infty))} \lesssim_{d,\alpha,C_1}C_2 e^{-\lambda \alpha t}.
\end{align*}
This settles the estimate for $0<\alpha\le 1$.

By a similar estimate, we also have for $\alpha>1$,
\begin{align*}
\| \nabla \eta \|_{S([t,\infty))} \lesssim_{d,\alpha,C_1}C_2 e^{-\lambda t}.
\end{align*}

This completes the proof of \eqref{17_g_102bbb}.
\end{proof}

The next proposition, unlike Proposition \ref{prop_main}, is based
solely on Strichartz estimates. It will be used in the proof of
Theorems \ref{thm_N_2} and \ref{thm:multi-kinks}.  Several assumptions
and conditions have to be modified to take care of the general
nonlinearity $f(u)$.

\begin{proposition} \label{prop_main_1}
Let $f$ be the same as in \eqref{v100a0} satisfying condition \eqref{v100a1}.
Let $\sourceterm=\sourceterm(t,x):\,[0,\infty)\times \R^d \to \mathbb C$,
$\profile=\profile(t,x):\,[0,\infty)\times \R^d \to \mathbb C$
be given functions  which satisfy for some
 $C_1>0$, $C_2>0$, $\lambda>0$, $T_0\geq 0$:
\begin{align} \label{v600a1}
&\|\profile(t) \|_{\infty}+    e^{\lambda t} \|\sourceterm(t) \|_2
 \le C_1, \qquad \forall\, t\ge T_0;\notag\\
&\|\nabla \profile(t) \|_2+ \| \nabla \profile(t) \|_{\infty} + e^{\lambda  t} \| \nabla \sourceterm(t)\|_2 \le C_2,
\qquad\;\forall\, t\ge T_0.
\end{align}
Consider the equation
\begin{align}
\eta(t)= i \int_t^{\infty} e^{i(t-\tau)\Delta} \Bigl( f(\profile+\eta)
-f(\profile) + \sourceterm \Bigr)(\tau)\, d\tau, \qquad t\ge T_0. \label{v600a2}
\end{align}
There exists a constant $\lambda_*=\lambda_*(d,\alpha_1, \alpha_2,
C_1)>0$ (independent of $C_2$) and a time $T_*=T_*(d,\alpha_1,\alpha_2,C_1,C_2)>0$  sufficiently large such that if $\lambda\ge \lambda_*$ and $T_0\ge T^*$,
then there exists a unique solution $\eta$ to
\eqref{v600a2} on $[T_0,+\infty)\times \mathbb R^d$ satisfying
\begin{align}
e^{\lambda t} \| \eta\|_{S([t,\infty))}+ e^{\lambda c_1 t} \| \nabla \eta \|_{S([t,\infty))}  \le 1, \qquad \forall t\ge T_0.\label{v600a3}
\end{align}
Here $c_1>0$ is a constant depending only on $(\alpha_1,d)$.
\end{proposition}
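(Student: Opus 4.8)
The plan is to solve the fixed-point equation \eqref{v600a2}, written $\eta=V\eta$, in two stages that match the two weights in \eqref{v600a3}: a contraction that produces $\eta$ together with its $L^2$-level Strichartz bound, followed by an \emph{a priori} bootstrap that upgrades to the gradient bound. Only the Strichartz estimate $\|\eta\|_{S(I)}\lesssim\|\eta_0\|_2+\|F\|_{N(I)}$ is available here (no pointwise dispersive gain as in Proposition \ref{prop_main}), so everything reduces to placing the source term into $N([t,\infty))$ with exponential-in-time smallness. For the first stage I would contract $V$ in the ball
\[
B_0=\Bigl\{\eta:\ \sup_{t\ge T_0}e^{\lambda t}\|\eta\|_{S([t,\infty))}\le 1\Bigr\},
\]
estimating, via \eqref{v200a4c} and $\|W(t)\|_\infty\le C_1$,
\[
\|f(W+\eta)-f(W)\|_{N([t,\infty))}\lesssim\sum_{k=1,2}\Bigl(\bigl\||\eta|\,|W|^{2\alpha_k}\bigr\|_{N([t,\infty))}+\bigl\||\eta|^{1+2\alpha_k}\bigr\|_{N([t,\infty))}\Bigr).
\]
The first type of term goes into $L^1_\tau L^2_x$ and is bounded by $C_1^{2\alpha_k}\int_t^\infty\|\eta(\tau)\|_2\,d\tau\lesssim\lambda^{-1}C_1^{2\alpha_k}e^{-\lambda t}$; the genuinely nonlinear term $|\eta|^{1+2\alpha_k}$ is handled by the standard subcritical Strichartz pair $(q,2\alpha_k+2)$, the infinite interval being harmless because the exponential decay of $\|\eta\|_{S([\tau,\infty))}$ (summed dyadically in $\tau$) forces the strictly faster rate $e^{-\lambda(1+2\alpha_k)t}$. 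Together with $\|H\|_{N([t,\infty))}\le\lambda^{-1}C_1e^{-\lambda t}$ this gives $e^{\lambda t}\|V\eta\|_{S([t,\infty))}\le C(C_1)/\lambda$, so for $\lambda\ge\lambda_*(d,\alpha_1,\alpha_2,C_1)$ (\emph{not} depending on $C_2$) $V$ maps $B_0$ into itself; the difference estimate is identical, the Lipschitz factor in \eqref{v200a4c} yielding contraction constant $C(C_1)/\lambda\le\tfrac12$. This produces the unique $\eta\in B_0$, and since any solution obeying \eqref{v600a3} lies in $B_0$, uniqueness in the class \eqref{v600a3} follows.

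For the gradient bound I would \emph{not} contract: by Lemma \ref{lem_6_2} the derivatives $f_z,f_{\bar z}$ are only H\"older continuous \eqref{v200a4b}, so the $\nabla$-level difference estimate is not Lipschitz. Instead I would run a bootstrap on $M(t):=\sup_{\tau\ge t}e^{\lambda c_1\tau}\|\nabla\eta\|_{S([\tau,\infty))}$, propagated along the Picard iterates $\eta_{n+1}=V\eta_n$ (so finiteness of $\|\nabla\eta_n\|_S$ is automatic and passes to the limit by lower semicontinuity of the $S$-norm). Differentiating the integrand by the product-rule identity \eqref{v200a2}, one gets a \emph{linear} piece $f_z(W)\nabla\eta+\overline{(\cdots)}$ together with \emph{inhomogeneous} pieces $[f_z(W+\eta)-f_z(W)]\nabla W$, the analogous term with $\nabla\eta$, and $\nabla H$. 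Using $|f_z(W)|\lesssim|W|^{2\alpha_1}+|W|^{2\alpha_2}\le C(C_1)$ (from \eqref{v100a1}) I would place the linear piece into $L^1_\tau L^2_x$, obtaining $\lesssim\lambda^{-1}C(C_1)\,M(t)\,e^{-\lambda c_1t}$: its decay rate matches the weight exactly and its coefficient is made $\le\tfrac12$ by large $\lambda$. This is the term that \emph{defines} the rate $c_1$ and is the only one allowed to carry $M(t)$ on the right.

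The crux, and the main obstacle, is the inhomogeneous term $[f_z(W+\eta)-f_z(W)]\nabla W$. By \eqref{v200a4b} its size is $\lesssim\sum_k|\eta|^{\min\{2\alpha_k,1\}}(|W|+|\eta|)^{\max\{2\alpha_k-1,0\}}|\nabla W|$; since $\|\nabla W(t)\|_{L^s}\le C_2$ for every $s\in[2,\infty]$ (interpolating the two hypotheses in \eqref{v600a1}) while $\|\eta(\tau)\|_S\le e^{-\lambda\tau}$, this term carries a coefficient proportional to $C_2$ and a genuine decay rate $\min\{2\alpha_1,1\}$ (the slowest, from $k=1$ since $\alpha_1\le\alpha_2$). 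Because $\lambda_*$ must be chosen independently of $C_2$, this $C_2$ cannot be absorbed by $\lambda$ — precisely the feature that forces $T_0$ to be large. The device is to pick $c_1$ \emph{strictly} below $\min\{2\alpha_1,1\}$ (e.g. $c_1=\tfrac12\min\{2\alpha_1,1\}$, depending only on $\alpha_1$); then this term, as well as $\nabla H$ (rate $1$), beat the weight $e^{\lambda c_1t}$ by a factor $e^{-\lambda(\mathrm{rate}-c_1)t}\le e^{-\lambda(\mathrm{rate}-c_1)T_0}$, which is made $\le\tfrac14$ by choosing $T_0\ge T_*(d,\alpha_1,\alpha_2,C_1,C_2)$ large. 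Collecting the iterate recursion gives $M_{n+1}(t)\le\tfrac12M_n(t)+\tfrac14$, hence $M\le\tfrac12$ in the limit; combined with the first-stage bound $e^{\lambda t}\|\eta\|_{S([t,\infty))}\le\tfrac12$ this yields \eqref{v600a3}. The term $[f_z(W+\eta)-f_z(W)]\nabla\eta$ is treated the same way, its extra $|\eta|$-factor supplying additional smallness, and the conjugate ($f_{\bar z}$) terms are identical by the symmetric bound in \eqref{v200a4b}.
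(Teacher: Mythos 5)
There is a genuine gap in your Stage 1, and it is fatal to the decoupled architecture. You claim that the pure-power term $|\eta|^{1+2\alpha_k}$ can be placed in $N([t,\infty))$ using "the standard subcritical Strichartz pair $(q,2\alpha_k+2)$" with only the $L^2$-level bound $\|\eta\|_{S([\tau,\infty))}\le e^{-\lambda\tau}$. Writing $\bigl\||\eta|^{1+2\alpha_k}\bigr\|_{L^{q'}_\tau L^{r'}_x}=\|\eta\|^{1+2\alpha_k}_{L^{(1+2\alpha_k)q'}_\tau L^{2\alpha_k+2}_x}$ with $r=2\alpha_k+2$, your dyadic-in-time summation requires H\"older on unit intervals in the direction $(1+2\alpha_k)q'\le q$, and since every admissible pair satisfies $\tfrac{2}{q'}+\tfrac{d}{r'}=2+\tfrac d2$, this (for \emph{any} choice of admissible pair, by scaling) is equivalent to $\alpha_k\le \tfrac2d$. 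The proposition, however, allows all energy-subcritical exponents $\alpha_2<\alpha_{\max}/2$ — e.g.\ the cubic nonlinearity in $d=3$ ($\alpha_2=1>\tfrac23$), or arbitrarily large $\alpha_2$ when $d=1,2$ — so for mass-supercritical exponents your contraction in $B_0$ cannot close: no $L^2$-level Strichartz estimate controls $\||\eta|^{1+2\alpha_2}\|_{N([t,\infty))}$, exponential decay notwithstanding. The paper itself makes this threshold explicit: its Theorem \ref{thm_N_2b} runs precisely your $L^2$-only scheme and must \emph{assume} $\alpha_2\le 2/d$ for exactly this reason (the condition \eqref{eq6.2} there, $a'm\le b$, is shown to be equivalent to $\alpha_j\le 2/d$).

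The paper's proof of Proposition \ref{prop_main_1} avoids this by \emph{coupling} the two norms rather than staging them: the a priori estimate is run simultaneously on $e^{\lambda t}\|\eta\|_{S([t,\infty))}+e^{c_1\lambda t}\|\nabla\eta\|_{S([t,\infty))}$, and the bound for $\|\eta\|_{S}$ itself borrows the gradient norm — for $d\ge3$ the high-power term in \eqref{bv802} is estimated by $\|\eta\|_{S([t,\infty))}\cdot\|\nabla\eta\|^{4/(d-2)}_{S([t,\infty))}$ through the Sobolev-embedded norm $L^{2(d+2)/(d-2)}_{\tau,x}$, and for $d=1,2$ through Gagliardo--Nirenberg using $\|\nabla\eta(\tau)\|_2$. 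This is why the weight $e^{c_1\lambda t}$ appears in the same bootstrap quantity \eqref{v600a3} as the $L^2$-level weight: the slower gradient decay $e^{-c_1\lambda t}$ is exactly what feeds back into the $\|\eta\|_S$ estimate. Your Stage 2, taken on its own, is sound in outline and matches the paper's devices (choosing $c_1$ strictly below the natural decay rate so that all $C_2$-carrying terms — $|\eta|^{\beta}|\nabla W|$, $\nabla H$ — beat the weight and are made small by $T_0\ge T_*(C_2)$, while the bounded-coefficient linear term $f_z(W)\nabla\eta$ is put in $L^1_\tau L^2_x$ and absorbed by $\lambda\ge\lambda_*$ independent of $C_2$); but it cannot be reached, since it presupposes the output of the broken Stage 1. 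The repair is to merge your two stages into one coupled estimate, as in the paper.
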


\begin{remark}
It is important to notice that $\lambda_*$ \emph{does not depend} on $C_2$. This will be essential for the proof of Theorems \ref{thm_N_2} and \ref{thm:multi-kinks}.
\end{remark}

\begin{proof}[Proof of Proposition \ref{prop_main_1}]
To minimize numerology we will suppress all explicit dependence of constants
on all parameters except the constant $C_2$.

We now sketch the main computations. Take $0<\beta_1 \le 2\alpha_1$ such that $\beta_1<\frac 1{100d}$. Denote
\begin{align*}
&\beta_2:=\begin{cases}
\frac 4{d-2}, \quad \text{if $d\ge 3$}, \\
m-1,  \quad \text{if $d=1,2$};
\end{cases}\\
 &c_1 :=\frac 12\beta_1.
\end{align*}
Here for $d=1,2$, $m$ is an integer such that $m>2\alpha_2+2$.

We shall omit the standard contraction argument since it will be essentially a repetition and we check only the following property: If on $[T_0,+\infty)$ we have
\[
e^{\lambda t} \| \eta\|_{S([t,\infty))}+ e^{c_1 \lambda t} \| \nabla \eta \|_{S([t,\infty))}  \le C.
\]
then the following a priori estimate holds, provided $\lambda$ and $T_0$ are chosen
large enough,
\begin{align}
e^{\lambda t} \| \eta\|_{S([t,\infty))}+ e^{c_1 \lambda t} \| \nabla \eta \|_{S([t,\infty))}  \le 1.
\label{vyy_apriori}
\end{align}

 We start with $\|\eta\|_{\stt}$.
By Lemma \ref{lem_6_2} and Strichartz, we have
\begin{align}
\|\eta\|_{S([t,\infty))} & \lesssim \| f(\profile+\eta) -f(\profile) \|_{\ntt} + \| \sourceterm\|_{\ntt} \notag \\
& \lesssim \| \eta( |\profile|^{\beta_1} + |\profile|^{\beta_2} + |\eta|^{\beta_1} + |\eta|^{\beta_2} )\|_{\ntt}
\label{bv802} \\
&\qquad+\|\sourceterm \|_{L_{\tau}^1 L_x^2([t,\infty))}. \label{bv801}
\end{align}
For \eqref{bv801}, by using \eqref{v600a1}, we have
\begin{align}
\| \sourceterm \|_{L_{\tau}^1 L_x^2([t,\infty))} \lesssim
\int_t^{\infty} e^{-\lambda \tau} d\tau \le \frac 1 {100} e^{-\lambda t }, \notag
\end{align}
where the constant $\frac{1}{100}$ is obtained by taking $\lambda$ large enough.\\
For \eqref{bv802}, consider two cases. If $d\ge 3$, then by the boundedness of $\profile$, we have
\begin{align}
\Bigl|\eta(|\profile|^{\beta_1} + |\profile|^{\beta_2} + |\eta|^{\beta_1} + |\eta|^{\beta_2} )
\Bigr| \lesssim |\eta|+|\eta|^{1+\frac 4{d-2}}.
\label{bv803}
\end{align}
Hence for $d\ge 3$, using that both $(\frac {2d+4}d, \frac {2d+4}d)$
and $(q^*,q)$ are admissible with $1/q* = 1/q-1/d = \frac{d-2}{2d+4}$,
\begin{align*}
\eqref{bv802} & \lesssim \| \eta \|_{L_{\tau}^1 L_x^2([t,\infty))} +
\| \eta |\eta|^{\frac 4{d-2}} \|_{L_{\tau,x}^{\frac{2(d+2)}{d+4}} ( [t,\infty))} \notag \\
& \lesssim \int_t^{\infty} e^{-\lambda \tau} d\tau + \| \eta \|_{L_{\tau,x}^{\frac{2(d+2)}d}([t,\infty))}
\cdot \| \eta \|^{\frac 4{d-2}}_{L_{\tau, x}^{\frac{2(d+2)}{d-2}} ( [t,\infty))} \notag \\
& \lesssim \frac 1 {\lambda} e^{-\lambda t} + \| \eta \|_{S([t,\infty))} \cdot \| \nabla \eta \|^{\frac 4{d-2}}_{S([t,\infty))}
\notag \\
& \lesssim \frac 1 {\lambda } e^{-\lambda t} + e^{-\lambda t} \cdot e^{-\frac 4{d-2} c_1 \lambda t} \notag \\
& \le \frac 1{100} e^{-\lambda t},
\end{align*}
where we have used the fact that $\lambda$ and $t\geq T_0$ are sufficiently large.\\
For $d=1,2$, we replace \eqref{bv803} by
\begin{align}
|\eta(|\profile|^{\beta_1} + |\profile|^{\beta_2} + |\eta|^{\beta_1} + |\eta|^{\beta_2} )| \lesssim |\eta|+|\eta|^{m}.
\notag
\end{align}
Then
\begin{align*}
\|  |\eta|^{m} \|_{\ntt} & \lesssim \| |\eta|^m \|_{L_{\tau}^1 L_x^2([t,\infty))} \notag \\
& \lesssim \int_t^{\infty} \| \eta(\tau) \|_{2m}^m d\tau.
\end{align*}
By \eqref{v600a3} and interpolation (i.e. Gagliardo-Nirenberg), we have for $\theta = d(\frac 12-\frac 1{2m})$
\begin{align*}
\|\eta(\tau) \|_{2m} &\lesssim \| \eta(\tau) \|_2^{1-\theta} \| \nabla \eta (\tau)\|_2^{\theta} \notag \\
& \lesssim  e^{-\bigl( (1-\theta) \lambda + c_1 \lambda \theta \bigr) \tau}.
\end{align*}
It is easy to check that $m(1-\theta)\ge 1$. Therefore
\begin{align*}
\| |\eta|^{m} \|_{\ntt}  \lesssim \int_t^{\infty} e^{-\lambda \tau} d\tau \le \frac 1 {100} e^{-\lambda t}.
\end{align*}
Hence the estimate also holds for $d=1,2$. Consequently for all $d\ge 1$, and $t\ge T_0$,
\begin{align*}
\| \eta\|_{\stt} \le \frac 1{10}e^{-\lambda t}.
\end{align*}

Now we estimate $\| \nabla \eta \|_{\stt}$. By Strichartz and
\eqref{F1identity}
\begin{align}
\| \nabla \eta \|_{\stt} & \lesssim \| \nabla (f(\profile+\eta) -f(\profile)) \|_{\ntt} + \| \nabla \sourceterm \|_{\ntt}
\notag \\
& \lesssim \| |f_z(\profile+\eta) -f_z(\profile) | \cdot \nabla (\profile+\eta) \|_{\ntt} \notag \\
&\qquad+ \| |f_{\bar z} (\profile+\eta) -f_{\bar z} (\profile) | \cdot \overline{\nabla (\profile+\eta)} \|_{\ntt}
 \notag \\
&\qquad + \| |f_z(\profile)| \nabla \eta \|_{\ntt}
+ \| |f_{\bar z} (\profile)| \overline{\nabla \eta} \|_{\ntt} + \| \nabla \sourceterm \|_{\ntt} . \notag
\end{align}
By Lemma \ref{lem_6_2}, we get
\begin{align}
\| \nabla \eta \|_{\stt} 
& \lesssim 
\| |\eta|^{\beta_1} |\nabla \eta |  \|_{\ntt}
+\| |\eta|^{\beta_1}  |\nabla \profile|) \|_{\ntt}
\label{vyy100a} \\
& \qquad + \| |\eta|^{\min\{\beta_2,1\}}(|\profile| + |\eta|)^{\max\{\beta_2-1,0\}} \cdot
(|\nabla \profile| + |\nabla \eta |) \|_{\ntt} \label{vyy100b}\\
&\qquad + \| (|f_z(W)|+|f_{\bar z}(W)|)\nabla \eta \|_{L_{\tau}^1 L_x^2([t,\infty))}+
\| \nabla \sourceterm\|_{L_{\tau}^1 L_x^2([t,\infty))}. \label{vyy100c}
\end{align}
Consider \eqref{vyy100a}.
Let $a$ be the number such that $\frac 2 a+ \frac d {\beta_1+2}=\frac d2$ and let $a^\prime=\frac a{a-1}$.
Then
\begin{align}
\| |\eta|^{\beta_1}  |\nabla \eta| \|_{\ntt}
&\lesssim \| |\eta|^{\beta_1} \nabla \eta \|_{L_{\tau}^{a^\prime} L_x^{\frac{\beta_1+2}{\beta_1+1}}([t,\infty))} \notag \\
&\lesssim \| |\eta|^{\beta_1} \|_{L_{\tau}^{(\frac 1{a^{\prime}}-\frac 1a)^{-1}} L_x^{\frac{\beta_1+2}{\beta_1}}
([t,\infty))} \| \nabla \eta \|_{L_{\tau}^a L_x^{\beta_1+2}([t,\infty))} \notag \\
& \lesssim \left( \int_t^{\infty}
\| \eta(\tau) \|^{\beta_1\cdot \frac a{a-2}}_{\beta_1+2} d\tau \right)^{\frac{a-2}a} \cdot \| \nabla \eta \|_{\stt}.
\label{vyy102}
\end{align}
It is not difficult to check that $\beta_1 \cdot \frac a{a-2}<a$
(since $\beta_1 < 4/d$). By using the fact
$\| \eta\|_{L_{\tau}^a L_x^{\beta_1+2}([t,\infty))} \lesssim e^{-\lambda t}$ and H\"older inequality,  for $t\ge T_0$ we have
\begin{align}
\int_t^{\infty} \| \eta(\tau) \|_{\beta_1+2}^{\beta_1 \cdot \frac a {a-2}} d\tau &\lesssim
\sum_{k\ge t-1} \int_k^{k+1} \| \eta(\tau) \|_{\beta_1+2}^{\beta_1 \cdot \frac a {a-2}} d\tau \notag \\
&\lesssim \sum_{k\ge t-1} \Bigl( \int_k^{k+1} \| \eta(\tau) \|_{\beta_1+2}^a d\tau \Bigr)^{\frac 1a \cdot\frac{a\beta_1}{a-2}} \notag \\
& \lesssim \sum_{k\ge t-1} e^{-\lambda k \cdot \frac{a\beta_1}{a-2}}
\lesssim \frac 1 {\lambda} e^{-\lambda (t-1) \cdot \frac{a\beta_1}{a-2}}. \notag 
\end{align}
Plugging the above estimate into \eqref{vyy102}, we obtain
\begin{align*}
\| |\eta|^{\beta_1}  |\nabla \eta| \|_{\ntt} \lesssim \left(\frac 1 {\lambda} \right)^{\frac{a-2}a}
 e^{-\lambda \beta_1(t-1)} \cdot e^{-c_1 \lambda t}
\le \frac 1{100} e^{-c_1 \lambda t}, \qquad t\ge T_0,
\end{align*}
for $\lambda$  sufficiently large and $ T_0\geq1$.\\
Similarly we have for $t\ge T_0$, using $\beta_1 a' = \beta_1 a/(a-1) < a$,
\begin{align*}
\| |\eta|^{\beta_1}  |\nabla \profile| \|_{\ntt}
&\lesssim \| |\eta|^{\beta_1} \|_{L_{\tau}^{a^{\prime} } L_x^{\frac{\beta_1+2}{\beta_1}}
([t,\infty))} \| \nabla \profile\|_{L_{\tau}^\infty L_x^{\beta_1+2}([t,\infty))} \notag \\
& \lesssim e^{-\lambda \beta_1 (t-1)} C_2 \notag \\
& \lesssim e^{-c_1 \lambda t} e^{-\lambda c_1 (t-2)} C_2 \le \frac 1{100} e^{-c_1 \lambda t}.
\end{align*}
Hence
\begin{align*}
\eqref{vyy100a} \le \frac 1{50} e^{-c_1 \lambda t}.
\end{align*}
Next we deal with \eqref{vyy100b}. Consider first the case $d\ge 6$. In this case $\beta_2\le 1$. Therefore
\begin{align}
\eqref{vyy100b} & \lesssim \| |\eta|^{\frac 4{d-2}} ( |\nabla \profile| + |\nabla \eta|) \|_{\ntt} \notag \\
& \lesssim \| |\eta|^{\frac 4{d-2}} \nabla \eta \|_{L_{\tau,x}^{\frac {2(d+2)}{d+4}}([t,\infty))}
+ \| |\eta|^{\frac 4{d-2}} |\nabla \profile| \|_{L_{\tau}^2 L_x^{\frac{2d}{d+2}} ([t,\infty))} \notag \\
& \lesssim \| \nabla \eta \|_{\stt}^{1+\frac 4{d-2}} + \left\|
\|\eta(\tau) \|_{L_x^{2}}^{\frac 4{d-2}} \cdot \| \nabla \profile\|_{L_x^{(\frac{d+2}{2d}-\frac 2{d-2})^{-1}}}
\right\|_{L_{\tau}^2([t,\infty))} \notag \\
& \lesssim e^{-c_1 \lambda (1+\frac 4{d-2}) t} + C_2 \cdot \Bigl( \int_t^{\infty}
\|\eta(\tau)\|_{2}^{\frac 4{d-2} \cdot 2}d\tau \Bigr)^{\frac 12} \notag \\
& \lesssim e^{-c_1 \lambda (1+\frac 4{d-2}) t} + C_2 \cdot \Bigl( \int_t^{\infty}
e^{-\frac{8}{d-2} \lambda \tau} d\tau\Bigr)^{\frac 12} \notag \\
& \le \frac 1 {200} e^{-c_1 \lambda t} + C_2 \cdot e^{-c_1 \lambda T_0}
\cdot e^{-c_1 \lambda t}
 \le \frac 1{100} e^{-c_1 \lambda t}, \label{vytmp_24aa}
\end{align}
for $\lambda$ and $T_0$ sufficiently large. \\
Consider next the case $3\le d\le 5$.  In this case $\beta_2=\frac
4{d-2}>1$. Therefore using the boundedness of $\profile$, we have
\begin{align}
\eqref{vyy100b} & \lesssim \| |\eta| \cdot (|\profile|+\eta)^{\frac 4{d-2}-1} (|\nabla \profile|+|\nabla \eta|) \|_{\ntt}
\notag \\
& \lesssim \||\eta|^{\beta_1} ( |\nabla \profile| + |\nabla \eta |) \|_{\ntt} + \| |\eta|^{\frac 4{d-2}}
(|\nabla \profile| + |\nabla \eta|) \|_{\ntt} \notag\\
& \lesssim |\eqref{vyy100a}| + \| |\eta|^{\frac 4{d-2}} \nabla \eta \|_{L_{\tau,x}^{\frac{2(d+2)}{d+4}}([t,\infty))}
+ \| |\eta|^{\frac 4{d-2}} |\nabla \profile| \|_{\ntt} \notag \\
& \le \frac 1{30} e^{-c_1 \lambda t}+ \| |\eta|^{\frac 4{d-2}} |\nabla \profile| \|_{\ntt}. \notag
\end{align}
For $d=5$, we can bound the term $\| |\eta|^{\frac 4{d-2}} |\nabla
\profile| \|_{\ntt}$ in the same way as in \eqref{vytmp_24aa} (it is
easy to check that $\frac{2d}{d+2}<\frac{d-2}2$ for $d\ge 5$).  For
$d=3,4$, we have
\begin{align}
\| |\eta|^{\frac 4{d-2}} |\nabla \profile| \|_{\ntt}
&\lesssim \| | \eta|^{\frac 4{d-2}} |\nabla \profile| \|_{L_{\tau}^2 L_x^{\frac{2d}{d+2}}([t,\infty))} \notag \\
& \lesssim C_2 \Bigl(\int_t^{\infty} \|\eta(\tau)\|_{\frac{8d}{d^2-4}}^{\frac 8{d-2}} d\tau\Bigr)^{\frac 12}.
\label{vytmp30}
\end{align}
Since $d=3, 4$, it is easy to check that $2<\frac{8d}{d^2-4}<\frac{2d}{d-2}$. By interpolation we have
for $\theta=\frac{1}{8} (d-2)^2$,
\begin{align*}
\|\eta (\tau) \|_{L_x^{\frac{8d}{d^2-4}}} & \lesssim
\| \eta(\tau)\|_2^{\theta} \| \nabla \eta (\tau) \|_2^{1-\theta} \notag \\
& \lesssim e^{-\theta \lambda \tau} e^{-(1-\theta)c_1 \lambda\tau} 
 \lesssim e^{-\theta \lambda \tau}.
\end{align*}
Plugging this estimate into \eqref{vytmp30}, we obtain for $d=3,4$,
\begin{align*}
\| |\eta|^{\frac 4{d-2}} |\nabla \profile| \|_{\ntt}  \lesssim C_2 \Bigl( \int_t^{\infty} e^{- \lambda (d-2) \tau} d\tau
\Bigr)^{\frac 12}
\lesssim C_2 \cdot \lambda^{-\frac {d-2}{2}} e^{-\frac{d-2}2 \lambda t} \le \frac 1{100} e^{-c_1 \lambda t}
\end{align*}
which is clearly enough for us.
\\
It remains to bound \eqref{vyy100b} for $d=1,2$. Since in this case $\beta_2=m-1>1$, we have
\begin{align}
\eqref{vyy100b} & \lesssim \| |\eta| (|\profile|+|\eta|)^{m-2} (|\nabla \profile|+|\nabla \eta|) \|_{\ntt} \notag \\
& \lesssim \| |\eta|^{\beta_1} (|\nabla \profile|+ |\nabla \eta|) \|_{\ntt} +
\| |\eta|^m |\nabla \profile| \|_{\ntt} + \| |\eta|^m |\nabla \eta| \|_{\ntt} \notag \\
& \lesssim |\eqref{vyy100a}| + \| |\eta|^m |\nabla \profile| \|_{L_{\tau}^1 L_x^2([t,\infty))}
+ \| |\eta|^m |\nabla \eta | \|_{L_{\tau,x}^{\frac{2(d+2)}{d+4}}([t,\infty))} \notag \\
& \lesssim |\eqref{vyy100a}| + C_2 \| \eta\|^m_{L_{\tau}^m L_x^{2m}([t,\infty))}
+ \|\nabla \eta\|_{\stt} \cdot \| \eta \|^m_{L_{\tau,x}^{m\cdot \frac{d+2}2}([t,\infty))}.
\label{vytmp65}
\end{align}
Now by Gagliardo-Nirenberg inequality,
\begin{align*}
\| \eta(\tau) \|_{2m}^m & \lesssim \Bigl( \| \eta(\tau)\|_2^{1-d(\frac 12-\frac 1{2m})}
\| \nabla \eta(\tau)\|_2^{d(\frac 12-\frac 1{2m})} \Bigr)^{m} \notag \\
& \lesssim \|\eta(\tau)\|_2^{\frac d2} \lesssim e^{-\frac 12 \lambda \tau}.
\end{align*}
Similarly
\begin{align*}
\| \eta(\tau) \|_{\frac{m(d+2)}2}^m \lesssim \| \eta(\tau)\|_2^{\frac{2d}{d+2}} \lesssim e^{-\frac 12 \lambda \tau}.
\end{align*}
Plugging the above estimates into \eqref{vytmp65} and integrating in time, we obtain for $d=1,2$,
\begin{align*}
\eqref{vyy100b} \le \frac 1{100} e^{-c_1 \lambda t}
\end{align*}
which is acceptable for us. We have completed the estimate of \eqref{vyy100b} for all $d\ge 1$.\\
Finally consider \eqref{vyy100c}.
Note $\||f_z(W)|+|f_{\bar z}(W)|\|_{L^\infty_{t,x}} \le C$  by \eqref{v200a4b} and
\eqref{v600a1}.
Thus
\begin{align*}
\eqref{vyy100c}&\le C \int_t^\infty \left ( \norm{\nabla
  \eta}_{L^\infty_t L^2_x([\tau,\infty))} +  \|\nabla H(\tau)\|_{L^2_x} \right )
  d\tau 
\\
&\le C \int_t^\infty (e^{- c_1 \lambda \tau} +
 C_2 e^{-\lambda \tau} )\, d\tau  \le \left(\frac C{c_1 \lambda} +C_2e^{-c_1\lambda t} \right)e^{-c_1\lambda t}  
\le \frac 1{100}  e^{-c_1\lambda t}  
\end{align*}
if we take $\lambda$ and $t\geq T_0$ large enough.\\
We have finished the proof of the a priori estimate \eqref{vyy_apriori}. The proposition is proved.
\end{proof}

\begin{remark}
Our proof does not work for the energy-critical case
   because the overlap of multi-solitons  no longer decays
   exponentially, but is just power-like; our proof relies heavily on
   the exponential decay property.
\end{remark}

\section{The $N$-soliton case}\label{sec:N}

In this section we give the proofs of Theorem \ref{thm_N_1} and Theorem
\ref{thm_N_2}.

We first recall \eqref{v100a6}, the multi-soliton profile, and observe
that the difference $\eta=u-R$ satisfies the equation
\begin{align}
i\partial_t \eta +\Delta \eta & = -f(R+\eta) +\sum_{j=1}^N f(R_j) \notag \\
& = -\bigl( f(R+\eta) -f(R) \bigr)-\bigl( f(R) -\sum_{j=1}^N
f(R_j)\bigr).\label{17_30a}
\end{align}

The following lemma gives the estimates on $R$ and the source term
$f(R)-\sum_{j=1}^N f(R_j)$.

\begin{lemma} \label{17_lem_1}
There exist constants $\tilde C_1>0$  depending on
$\bigl(N, \alpha_1,\alpha_2,d, (\omega_j)_{j=1}^N, (x_j)_{j=1}^N \bigr)$,
 $\tilde c_1>0$ depending only
on $\alpha_1$,
$\tilde C_2>0$ depending on $\bigl(N, \alpha_1,\alpha_2,d,
(\omega_j)_{j=1}^N, (v_j)_{j=1}^N, (x_j)_{j=1}^N \bigr)$, such that the following
hold: For every $1\le r\le \infty$ and $t \ge 0$,
\begin{align}
&\| R(t) \|_{r} +\sum_{j=1}^N \|R_j(t)\|_r\le \tilde C_1,
\label{17_source_40a} \\
& \Bigl\| f(R(t)) - \sum_{j=1}^N f(R_j(t)) \Bigr\|_{r} \le \tilde C_1
e^{-\tilde c_1 \sqrt{\omega_\star }  v_\star  t},  \label{17_source_40b} \\
& \| \nabla R(t) \|_{r} \le \tilde C_2, \label{17_source_40c} \\
&\Bigl \| \nabla \bigl(  f(R(t)) -\sum_{j=1}^N f(R_j(t)) \bigr)
\Bigr \|_{r} \le \tilde C_2  e^{-\tilde c_1 \sqrt{\omega_\star }v_\star t}. \label{17_source_40d}
\end{align}
Here recall  $\omega_\star =\min\{\omega_j, \, 1\le j\le N\}$ and $v_\star =\min\{|v_k-v_j|:\; 1\le k\ne j \le N\}$.
\end{lemma}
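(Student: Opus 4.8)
The plan is to exploit two structural facts about the profile. First, each soliton is a rigid translate of its bound state, so its $L^r_x$ norm is independent of time; second, by \eqref{v100a5} every bound state decays exponentially, so the overlap between any two solitons is controlled by their separation $|c_j(t)-c_k(t)|$, where $c_j(t):=v_jt+x_j$. Since $|c_j(t)-c_k(t)|\ge v_\star t - C_0$ with $C_0:=\max_{j,k}|x_j-x_k|$ grows linearly in $t$, every cross term will be exponentially small in time. Throughout I fix $\theta\in(0,1)$ and use $|\Phi_j|+|\nabla\Phi_j|\lesssim e^{-\theta\sqrt{\omega_j}|\cdot|}$.

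First I would dispose of the uniform bounds \eqref{17_source_40a} and \eqref{17_source_40c}. Because $|R_j(t,x)|=|\Phi_j(x-c_j(t))|$ and the above exponential decay places each $\Phi_j$ in $L^r\cap W^{1,r}$ for every $1\le r\le\infty$, translation invariance gives $\|R_j(t)\|_r=\|\Phi_j\|_r$, and \eqref{17_source_40a} follows by the triangle inequality. Differentiating \eqref{v100a3a}, $\nabla R_j$ produces $\nabla\Phi_j$ together with the phase contribution $\tfrac i2 v_j\Phi_j$, so $\|\nabla R_j(t)\|_r\lesssim\|\nabla\Phi_j\|_r+|v_j|\,\|\Phi_j\|_r$; this is the point at which $\tilde C_2$ acquires its dependence on the velocities.

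The heart of the lemma is \eqref{17_source_40b}. I would partition $\R^d$ into the Voronoi cells $\Omega_k:=\{x:\ |x-c_k(t)|\le|x-c_l(t)|\ \forall l\}$. On $\Omega_k$, write $f(R)-\sum_l f(R_l)=\bigl(f(R)-f(R_k)\bigr)-\sum_{l\ne k}f(R_l)$; since the arguments are bounded by \eqref{17_source_40a}, estimate \eqref{v200a4c} gives $|f(R)-f(R_k)|\lesssim|R-R_k|=\bigl|\sum_{l\ne k}R_l\bigr|$, while $|f(R_l)|\lesssim|R_l|$ (as $f(0)=0$), so pointwise on $\Omega_k$,
\[
\Bigl| f(R)-\sum_l f(R_l)\Bigr|\lesssim\sum_{l\ne k}|R_l(x)|.
\]
On $\Omega_k$ one has $|x-c_l|\ge\tfrac12|c_l-c_k|$, hence $|x-c_l|\ge\tfrac12|x-c_l|+\tfrac14|c_l-c_k|$, which factors $|R_l(x)|\lesssim e^{-\theta\sqrt{\omega_l}|x-c_l|}$ into an integrable profile $e^{-\frac\theta2\sqrt{\omega_l}|x-c_l|}$ times the time factor $e^{-\frac\theta4\sqrt{\omega_l}|c_l-c_k|}$. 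Using $\sqrt{\omega_l}\ge\sqrt{\omega_\star}$ and $|c_l-c_k|\ge v_\star t-C_0$, then integrating over each cell (or taking the supremum when $r=\infty$) and summing in $k$ yields \eqref{17_source_40b} for all $1\le r\le\infty$ at once, with the constant $e^{\frac\theta4\sqrt{\omega_\star}C_0}$ absorbed into $\tilde C_1$; this explains the dependence of $\tilde C_1$ on the $x_j$.

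For the gradient \eqref{17_source_40d} I would differentiate via \eqref{F1identity} and again localize to $\Omega_k$, comparing the full derivative with its $k$-th summand, which produces three types of terms: $\bigl(f_z(R)-f_z(R_k)\bigr)\nabla R$, the term $f_z(R_k)\sum_{l\ne k}\nabla R_l$, and stray pieces $f_z(R_l)\nabla R_l$ ($l\ne k$), plus the analogues with $f_{\bar z}$. The latter two are controlled exactly as before, since $\nabla R_l$ decays like $R_l$ (its $|v_l|$ prefactor now carried by $\tilde C_2$) and $|f_z(R_k)|\lesssim1$, $|f_z(R_l)|\lesssim|R_l|^{2\alpha_1}$ are bounded. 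The genuinely new term uses the H\"older bound \eqref{v200a4b}: $|f_z(R)-f_z(R_k)|\lesssim\bigl|\sum_{l\ne k}R_l\bigr|^{\min\{2\alpha_1,1\}}$, and multiplying by $|\nabla R|\lesssim\tilde C_2$ raises the small overlap to the power $\min\{2\alpha_1,1\}\le1$. This lowers the decay rate by exactly this factor, which is precisely why a common rate $\tilde c_1$ for all four estimates can be taken depending only on $\alpha_1$. The main obstacle throughout is the bookkeeping in these last two steps: converting the spatial exponential decay of the bound states into genuine exponential-in-$t$ smallness via the linearly growing separation, uniformly over all $1\le r\le\infty$ including the endpoints, and tracking the fractional loss $\min\{2\alpha_1,1\}$ arising from the mere H\"older (rather than Lipschitz) continuity of $f'$ when $\alpha_1<\tfrac12$.
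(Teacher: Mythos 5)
Your proof is correct, but it takes a genuinely different route from the paper's. For the key estimate \eqref{17_source_40b}, the paper never partitions space: it exploits the structure $f(u)=g(|u|^2)u$ to write $f(R)-\sum_j f(R_j)=\sum_j\bigl(g(|R|^2)-g(|R_j|^2)\bigr)R_j$ globally, applies the H\"older bounds of Lemma \ref{lem_6_2} to reduce everything to pairwise products $|R_k|\,|R_j|$ ($k\ne j$), and then uses the triangle inequality $\sqrt{\omega_k}|x-v_kt-x_k|+\sqrt{\omega_j}|x-v_jt-x_j|\ge\sqrt{\omega_\star}\bigl(v_\star t-|x_k-x_j|\bigr)$, spending half of the resulting exponent on decay in $t$ and keeping the other half as a spatially integrable weight --- which is how it obtains all $1\le r\le\infty$ at once. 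Your Voronoi-cell decomposition, with the add-and-subtract of the dominant $f(R_k)$ on each cell $\Omega_k$, performs the same conversion of spatial separation into temporal decay; your inequality $|x-c_l|\ge\tfrac12|x-c_l|+\tfrac14|c_l-c_k|$ on $\Omega_k$ plays exactly the role of the paper's triangle-inequality step. The gradient estimate is likewise handled differently: the paper uses the single global identity $\nabla f(R)-\sum_j\nabla f(R_j)=\sum_j\bigl(f_z(R)-f_z(R_j)\bigr)\nabla R_j+\sum_j\bigl(f_{\bar z}(R)-f_{\bar z}(R_j)\bigr)\overline{\nabla R_j}$ and repeats the product argument, while you re-expand on the cells. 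As for what each buys: the paper's argument is shorter, needs no partition bookkeeping, and leans on the algebraic form of $f$; yours is more robust (it only uses $f(0)=0$ and the local Lipschitz/H\"older bounds \eqref{v200a4c}, \eqref{v200a4b}, so it would apply to sums of rather general localized profiles), and it yields \eqref{17_source_40b} with the full rate, the fractional loss $\min\{2\alpha_1,1\}$ entering only through \eqref{17_source_40d} --- whereas in the paper's proof the term $(|R_k||R_j|)^{2\alpha_1}$ already degrades the rate of \eqref{17_source_40b} itself when $\alpha_1<\tfrac12$. Both arguments deliver the lemma as stated.
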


\begin{proof}[Proof of Lemma \ref{17_lem_1}]
The estimates \eqref{17_source_40a} and \eqref{17_source_40c} follow
directly from \eqref{v100a3a} and \eqref{v100a5}.

To simplify the notations, denote
\begin{align*}
\Omega&:=\bigl(N, \alpha_1,\alpha_2,d, (\omega_j)_{j=1}^N, (x_j)_{j=1}^N \bigr). 
\end{align*}

To prove
\eqref{17_source_40b}, we start with the point-wise estimate.
By \eqref{17_source_40a} and Lemma \ref{lem_6_2},
\begin{align}
\Bigl| f(R(t,x)) - \sum_{j=1}^N f(R_j(t,x)) \Bigr| &=\Bigl|
\sum_{j=1}^N g(|R(t,x)|^2)R_j(t,x) - \sum_{j=1}^N g(|R_j(t,x)|^2)
R_j(t,x) \Bigr| \notag \\
& \le \sum_{j=1}^N |g(|R(t,x)|^2) -g(|R_j(t,x)|^2) |\cdot |R_j(t,x)| \notag\\
 & \lesssim_{\Omega}  \sum_{j=1}^N \bigl(|R(t,x)-R_j(t,x)|+|R(t,x)-R_j(t,x)|^{2 \alpha_1}
 \bigr)
\cdot |R_j(t,x)| \notag \\
& \lesssim_{\Omega} \sup_{k \ne j} \Bigl( |R_k(t,x)| \cdot |R_j(t,x)|
+(|R_k(t,x)|\cdot |R_j(t,x)|)^{ 2\alpha_1} \Bigr).
\label{17_teqz_10}
\end{align}

It suffices to treat the first term in the bracket of \eqref{17_teqz_10}. The second term is similarly estimated.

By \eqref{v100a5}, for any $\delta<1$,
\begin{align*}
|R_k(t,x)| \lesssim_{d,\delta} e^{-\delta \sqrt{\omega_k}|x-v_kt-x_k|},  \quad \forall\, k=1,\dots, N.
\end{align*}
Now fix some $\delta<1$ for the rest of the proof.

Clearly for any $k\ne j$,
\begin{align}
|R_k(t,x)| \cdot |R_j(t,x)| \lesssim_{d,\delta}  e^{ - \delta (\sqrt{\omega_k} | x-v_k t -x_k|+
\sqrt{\omega_j}|x-v_jt -x_j| )} .\label{000v000a1}
\end{align}

By the triangle inequality, it is clear that for all $j\ne k$, $x\in
\mathbb R^d$, $t\ge 0$:
\begin{align}
& \sqrt{\omega_k}|x-v_kt -x_k|+\sqrt{\omega_j}|x-v_jt -x_j| \notag \\
\ge & \min\{\sqrt{\omega_j},\sqrt{\omega_k}\}
\Bigl( |v_j-v_k| t -|x_k-x_j| \Bigr) \notag \\
\ge & \sqrt{\omega_\star }
\Bigl( v_\star t -|x_k-x_j| \Bigr). \label{000v000a2}
\end{align}

Plugging \eqref{000v000a2} into \eqref{000v000a1}, we obtain
for any $k\ne j$,
\begin{align}
|R_k(t,x)| \cdot |R_j(t,x)| \lesssim_{\Omega} e^{-\frac{\delta}2 \sqrt{\omega_\star } v_\star  t}
\cdot e^{ - \frac{\delta}2 (\sqrt{\omega_k} | x-v_k t -x_k|+
\sqrt{\omega_j}|x-v_jt -x_j| )} \label{000v000a3}
\end{align}

Now \eqref{17_source_40b} follows easily from \eqref{000v000a3} and \eqref{17_teqz_10}.

Finally to show \eqref{17_source_40d} we only need to recall \eqref{v200a2} and write
\begin{align*}
&\nabla( f(R) ) -\sum_{j=1}^N \nabla (f(R_j) ) \notag \\
=&\;  \sum_{j=1}^N ( f_z(R)-f_z(R_j) ) \nabla R_j + \sum_{j=1}^N ( f_{\bar z}(R)-f_{\bar z} (R_j) ) \overline{\nabla R_j}.
\end{align*}
Thanks to the above decomposition, the rest of the proof is  essentially a repetition of that of
 \eqref{17_source_40b}. The only difference is that
the constants will depend on the velocities $v_j$ due to the terms $\nabla R_j$.  We omit further details.
\end{proof}

Now we are ready to complete the

\begin{proof}[Proof of Theorem \ref{thm_N_1}]
By \eqref{17_30a}, we need to solve the integral equation
\eqref{17_g_100} for $\eta$ on $[0,\infty) \times \mathbb R^d$, with
  $\profile=R$ and $\sourceterm = f_1(R) -\sum_{j=1}^N f_1(R_j)$.  By
  Lemma \ref{17_lem_1}, conditions \eqref{asp_f_1a} and
  \eqref{asp_f_1c} are satisfied. Thus, by Proposition
  \ref{prop_main}, there exists $\eta \in C([0,\infty),H^1)$ with $\|
    \langle \nabla \rangle \eta\|_{S([t,\infty))}$ decaying
      exponentially in $t$. Since the soliton piece $R \in
      C([0,\infty), H^1)$, so is $u(t)$.
\end{proof}

\begin{proof}[Proof of Theorem \ref{thm_N_2}]
This is similar to the proof of Theorem \ref{thm_N_1}.  We need to
apply Proposition \ref{prop_main_1} with $\profile=R$ and $\sourceterm
= f(R) -\sum_{j=1}^N f(R_j)$.  By Lemma \ref{17_lem_1}, the condition
\eqref{v600a1} is satisfied. By Proposition \ref{prop_main_1}, there
exists $\eta \in C([T_0,\infty),H^1)$ with $\| \langle \nabla \rangle
  \eta\|_{S([t,\infty))}$ (in particular $\|\eta(t)\|_{H^1}$) decaying
    exponentially in $t$. 
\end{proof}

\section{An infinite soliton train}\label{sec:infty}

In this section we construct an infinite soliton train solution to
\eqref{v100a0}.

Thanks to Proposition \ref{prop_main},
the proof of Theorem \ref{thm_inf_1} is reduced to checking the regularity of the infinite soliton
$R_\infty$ and the tail estimates.

\begin{lemma}[Regularity of $ R_\infty$] \label{lem_train_001}
Let $R_\infty$ be given as in \eqref{train_001} and recall $f_1(z)=|z|^{\alpha} z$. Then
\begin{enumerate}
\item There is a constant $\tilde A_1>0$ depending only on $(A_\omega,d,\alpha)$, such that
\begin{align}
&\|R_\infty(t) \|_{\infty} + \|R_\infty(t) \|_{r_1} + \sum_{j=1}^\infty
(\|\tilde R_j(t)\|_{\infty}+\|\tilde R_j(t)\|_{r_1}) \le \tilde A_1, \qquad\forall\, t\ge 0, \label{19_001} \\
& \| f_1(R_\infty(t)) \|_{\frac{\alpha+2-\epsilon_1}{\alpha+1}} + \sum_{j=1}^{\infty}
 \| f_1(\tilde R_j(t)) \|_{\frac{\alpha+2-\epsilon_1}{\alpha+1}} \le
\tilde A_1, \qquad\forall\, t\ge 0.                 \label{19_002}
\end{align}
where $0<\epsilon_1<1$ is a small constant depending on $(r_1,\alpha)$.

\item There are constants $\tilde c_1>0$, $\tilde c_2>0$ depending only on $(\alpha,d)$, $C_1>0$, $C_2>0$
depending on $(\tilde A_1,d, \alpha)$,  such that
\begin{align}
&\| f_1(R_\infty(t)) - \sum_{j=1}^{\infty} f_1(\tilde R_j(t)) \|_\infty \le C_1 e^{-\tilde c_1 v_\star t}, \qquad \forall\, t\ge 0,  \label{19_003} \\
&\| f_1(R_\infty(t)) -\sum_{j=1}^{\infty} f_1(\tilde R_j(t)) \|_{\frac{\alpha+2}{\alpha+1}} \le C_2 e^{-\tilde c_2 v_\star  t}, \qquad
\forall\, t\ge 0. \label{19_004}
\end{align}
\end{enumerate}

\end{lemma}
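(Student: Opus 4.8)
The plan is to establish the four estimates in the order \eqref{19_001}, \eqref{19_002}, \eqref{19_003}, \eqref{19_004}, since the regularity bounds feed into the tail estimates. For \eqref{19_001}, I would compute the relevant norms of a single scaled soliton $\tilde R_j$. From \eqref{soliton_100}, a change of variables $y=\sqrt{\omega_j}(x-v_j t)$ gives $\|\tilde R_j(t)\|_{r_1}=\omega_j^{\frac1\alpha-\frac{d}{2r_1}}\|\Phi_0\|_{r_1}$ and $\|\tilde R_j(t)\|_\infty=\omega_j^{\frac1\alpha}\|\Phi_0\|_\infty$. Summing over $j$ and invoking the integrability hypothesis \eqref{omega_50} controls the $L^{r_1}$ sum by $A_\omega\|\Phi_0\|_{r_1}$; for the $L^\infty$ sum one needs $\omega_j^{1/\alpha}$ summable, which follows from \eqref{omega_50} once one checks the exponent $\frac1\alpha$ dominates $\frac1\alpha-\frac{d}{2r_1}$ under $r_1<\alpha+2$ (so that the $\omega_j$ are bounded and the smaller-exponent series dominates the larger for small $\omega_j$, or one argues the $\omega_j$ must be bounded). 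The triangle inequality then transfers these bounds to $R_\infty$ itself.

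For \eqref{19_002}, I would again reduce to a single soliton: $\|f_1(\tilde R_j)\|_{p}=\|\,|\tilde R_j|^{\alpha+1}\|_p=\|\tilde R_j\|_{(\alpha+1)p}^{\alpha+1}$. Taking $p=\frac{\alpha+2-\epsilon_1}{\alpha+1}$ makes $(\alpha+1)p=\alpha+2-\epsilon_1$, so by scaling $\|f_1(\tilde R_j)\|_p=\omega_j^{(\alpha+1)(\frac1\alpha-\frac{d}{2(\alpha+2-\epsilon_1)})}\|\Phi_0\|_{\alpha+2-\epsilon_1}^{\alpha+1}$. The exponent on $\omega_j$ here is larger than $\frac1\alpha-\frac{d}{2r_1}$ provided $\alpha+2-\epsilon_1>r_1$, which is exactly where the hypothesis $r_1<\alpha+2$ is used (choosing $\epsilon_1$ small enough that $\alpha+2-\epsilon_1>r_1$ still holds); one then dominates the summand by a multiple of $\omega_j^{\frac1\alpha-\frac d{2r_1}}$, and summability follows from \eqref{omega_50}.

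For the tail estimates \eqref{19_003} and \eqref{19_004}, the key algebraic identity is the telescoping expansion of $f_1(R_\infty)-\sum_j f_1(\tilde R_j)$ into cross terms. Writing $f_1(R_\infty)=|R_\infty|^\alpha R_\infty$ and using the Lipschitz-type bound \eqref{00t0atmp13} (or the general Lemma \ref{lem_6_2}), each term is controlled pointwise by products $|\tilde R_j|\,|\tilde R_k|^{\alpha}$ with $j\neq k$, i.e. by genuine overlaps. The crucial input is then the exponential separation: from the decay \eqref{v100a5}, $|\tilde R_k(t,x)|\lesssim \omega_k^{1/\alpha}e^{-\delta\sqrt{\omega_k}|x-v_kt|}$, and the triangle-inequality computation exactly as in \eqref{000v000a1}--\eqref{000v000a3} yields, for $j\neq k$, a factor $e^{-c\sqrt{\min\{\omega_j,\omega_k\}}\,|v_j-v_k|\,t}\le e^{-c v_\star t}$ by the high-speed Assumption \eqref{omega_star_100} (here with $x_j=0$, so no $|x_k-x_j|$ correction is needed). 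This gives the $L^\infty$ bound \eqref{19_003} after summing the geometric-type series in the overlaps, and \eqref{19_004} follows by interpolating or by the same overlap estimate measured in $L^{\frac{\alpha+2}{\alpha+1}}$, extracting one factor of $e^{-cv_\star t}$ uniformly and leaving a spatially-integrable-and-summable remainder controlled by Part (1).

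The main obstacle I anticipate is the bookkeeping of the infinite sum of cross terms: unlike the finite $N$-soliton case of Lemma \ref{17_lem_1}, here one must show that summing over all pairs $j\neq k$ still converges, so the exponential-in-$t$ factor $e^{-cv_\star t}$ must be extracted uniformly while the residual spatial/frequency sum is controlled by the $A_\omega$-type quantities from Part (1). Concretely, after pulling out $e^{-\frac{c}{2}v_\star t}$ via \eqref{000v000a2}, one is left with a double series whose summability must be established using the scaling weights $\omega_j^{1/\alpha}$ and the bounds of Part (1); keeping the overlap estimate sharp enough that this double sum converges (rather than merely bounding each pair) is the delicate point. The assumption $r_1<\alpha+2$ and the careful choice of $\epsilon_1$ are precisely what make the frequency-weighted sums in Part (1) available to close this.
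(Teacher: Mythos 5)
Your proposal is correct and follows essentially the same route as the paper: Part (1) by soliton scaling together with the summability hypothesis \eqref{omega_50} (the $\omega_j$ being bounded because the exponent in $A_\omega$ is positive), \eqref{19_003} by the cross-term decomposition $f_1(R_\infty)-\sum_j f_1(\tilde R_j)=\sum_j\bigl(|R_\infty|^\alpha-|\tilde R_j|^\alpha\bigr)\tilde R_j$ combined with the separation estimate from Assumption \ref{assumption} and the frequency-weighted double sum closed by Part (1), and \eqref{19_004} by interpolating \eqref{19_002} with \eqref{19_003}, exactly as in the paper. One caution: of your two suggested routes for \eqref{19_004}, only the interpolation one is safe in general --- measuring the overlaps directly in $L^{\frac{\alpha+2}{\alpha+1}}$ leaves single factors $|\tilde R_j|$ whose norms scale like $\omega_j^{\frac1\alpha-\frac{d(\alpha+1)}{2(\alpha+2)}}$, which need not be summable in the mass-supercritical regime (the gain of the factor $\alpha+1$ on the $\omega_j$-exponent occurs only for the full power $f_1(\tilde R_j)$, which is precisely why \eqref{19_002} with its $\epsilon_1$-shifted exponent is set up as the interpolation endpoint); also, boundedness of the $\omega_j$ comes from $\frac{d\alpha}{2}<r_1$ (positivity of the $A_\omega$-exponent), not from $r_1<\alpha+2$.
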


\begin{proof}[Proof of Lemma \ref{lem_train_001}]
The inequalities \eqref{19_001}--\eqref{19_002} are simple consequences of \eqref{omega_50}.
The proof of the inequality \eqref{19_003} is similar to the proof of \eqref{17_source_40b} and
we sketch the modifications. By using \eqref{19_001} and \eqref{v100a5} (fix $\eta<1$),
we have
\begin{align*}
|f_1(R_\infty(t,x)) -\sum_{j=1}^{\infty} f_1(\tilde R_j(t,x)) |
& \lesssim \sum_{j=1}^{\infty} \Bigl ||R_\infty(t,x)|^{\alpha} - |\tilde R_j(t,x)|^{\alpha} \Bigr| \cdot |\tilde R_j(t,x)| \notag \\
& \lesssim \sum_{j=1}^{\infty} |R_\infty(t,x)-\tilde R_j(t,x)|^{\min\{\alpha,1\}} |\tilde R_j(t,x)| \notag \\
& \lesssim \sum_{j=1}^\infty \Bigl|
\sum_{k\ne j} \omega_{k}^{\frac 1 {\alpha}}   e^{ - \eta \sqrt{\omega_k} | x-v_k t |} \Bigr|^{\min\{1,\alpha\}}
\omega_j^{\frac 1{\alpha}} e^{-\eta \sqrt{\omega_j}|x-v_j t |} \notag \\
& \lesssim
\sum_{j=1}^{\infty} \omega_j^{\frac 1\alpha} \Bigl | \sum_{k\ne j} \omega_k^{\frac 1{\alpha}}
e^{-\eta (\sqrt{\omega_k}|x-v_kt|+ \sqrt{\omega_j} |x-v_jt| )} \Bigr|^{\min\{1,\alpha\}}.
\end{align*}
By \eqref{omega_star_100}, we have
\begin{align*}
\sqrt{\omega_k}|x-v_kt|+ \sqrt{\omega_j} |x-v_jt|  \ge v_\star  t, \qquad\forall\, t\ge 0.
\end{align*}
Hence \eqref{19_003} follows from the above estimate and \eqref{omega_50}.
Finally \eqref{19_004} follows from interpolating the estimates \eqref{19_002}--\eqref{19_003}.
\end{proof}

We now complete the

\begin{proof}[Proof of Theorem \ref{thm_inf_1}]

We first rewrite \eqref{g_main_001} as
\begin{align*}
\eta(t) = i \int_t^{\infty} e^{i(t-\tau)\Delta}
\Bigl( f_1(R_\infty+\eta) -f_1 (R_\infty) + f_1(R_\infty) - \sum_{j=1}^{\infty} f_1(\tilde R_j) \Bigr)d\tau.
\end{align*}

We then apply Proposition \ref{prop_main} with $\profile=R_\infty$ and $\sourceterm = f_1(R_\infty) -\sum_{j=1}^{\infty} f_1(\tilde R_j)$.
By Lemma \ref{lem_train_001}, it is easy to check that the condition \eqref{asp_f_1a}
is satisfied. The theorem follows easily.
\end{proof}

\section{Half-kinks}\label{sec:kinks}

We conclude this paper by giving the proofs of Theorem \ref{thm:multi-kinks} and Proposition \ref{prop:existence-kink}.

\begin{proof}[Proof of Theorem \ref{thm:multi-kinks}]
The proof is similar to that of Theorem \ref{thm_N_2}. The only
difference is that, due to the non-zero background, the profile $KR$
is not in $\mathcal{C}(\R,H^1)$ any more but only in
$\mathcal{C}(\R,H^1_{\mathrm{loc}})$.
\end{proof}

\begin{proof}[Proof of Proposition \ref{prop:existence-kink}]
Assume $\omega=\omega_1$ and define $\zeta_1:=\zeta(\omega_1)$.
Take any $\phi_0\in(0,\zeta_1)$ and let $\phi$ be the solution to \eqref{eq:snls} on the maximal interval of existence $I$ and with initial data 
\[
\phi(0)=\phi_0,\qquad \phi'(0)=\sqrt{{\omega_1}\phi_0^2-2F(\phi_0)}.
\]

We first prove that $\phi(x)\in(0,\zeta_1)$ for any $x\in I$. Indeed, assume on the contrary that there exists $x_0$ such that $\phi(x_0)=0$ or $\phi(x_0)=\zeta_1$. From our choice of initial data for $\phi$, it follows that, for any $x\in I$, $\phi$ satisfies the first integral identity
\begin{equation}\label{eq:first-integral}
-\frac12|\phi'(x)|^2=F(\phi(x))-\frac{\omega_1}{ 2}|\phi(x)|^2.
\end{equation}
In particular, \eqref{eq:first-integral} at $x=x_0$ implies
\[
\phi'(x_0)=0.
\]
However, by Cauchy-Lipschitz Theorem it follows that $\phi\equiv 0$ or $\phi\equiv \zeta_1$ on $I$, which enters in contradiction with $\phi_0\in(0,\zeta_1)$. Hence for all $x\in I$ we have $\phi(x)\in(0,\zeta_1)$ which implies in particular that $I=\R$.

Since $\phi_0\in(0,\zeta_1)$, we have $\phi'(0)>0$ and by continuity $\phi'(x)>0$ for $x$ close to $0$. We claim that in fact $\phi'(x)>0$ on $\R$. Indeed, assume by contradiction that there exists $x_0$ such that $\phi'(x_0)=0$. From the first integral \eqref{eq:first-integral}, this implies that 
\[
F(\phi(x_0))-\frac{\omega_1 }{2}|\phi(x_0)|^2=0.
\]
Therefore $\phi(x_0)=0$ or $\phi(x_0)=\zeta_1$, but  we have proved that to  be impossible. Hence $\phi'>0$ on $\R$.

We consider now the limits of $\phi$ at $\pm \infty.$ Define 
\[
l:=\lim_{x\to-\infty}\phi(x),\qquad L:=\lim_{x\to+\infty}\phi(x).
\]
Let us show that $l=0$ and $L=\zeta_1$. Indeed, by \eqref{eq:first-integral}, we have  $F(l)-\frac{\omega_1} {2}l^2=0$ (indeed otherwise it would implies $|\phi'|>\delta>0$ for $x$ large, a contradiction with the boundedness of $\phi$). Since $\phi\in(0,\zeta_1)$ and $\phi$ is increasing, this implies $l=0$ and $L=\zeta_1$.

Let us now show that $\phi$ is unique up to translations. Assume by contradiction that there exists $\tilde\phi\in\mathcal C^2(\R)$ solution to \eqref{eq:snls} satisfying the connection property  \eqref{eq:connection}. Since we claim uniqueness only up to translation, we can assume that $\phi(0)\in(0,\zeta_1)$.
In addition, since we have shown that $\phi$ varies continuously from $0$ to $\zeta_1$, we can also assume without loss of generality that $\phi(0)=\phi_0=\tilde\phi(0)$. 
The first integral identity for $\tilde\phi$ is for any $x\in\R$
\begin{equation*}
\frac12|\tilde\phi'(x)|^2-\frac{\omega_1}{ 2}|\tilde\phi(x)|^2+F(\tilde\phi(x))=\frac12|\tilde\phi'(0)|^2-\frac{\omega_1}{ 2}|\tilde\phi(0)|^2+F(\tilde\phi(0))
\end{equation*}
In particular, since $\lim_{x\to\pm\infty}\tilde\phi'(x)=0$, and $0$ and $\zeta_1$ are zeros of $\zeta\to F(\zeta)-\frac\omega2\zeta^2$, we have 
\[
\frac12|\tilde\phi'(0)|^2=\frac{\omega_1}{ 2}|\tilde\phi(0)|^2-F(\tilde\phi(0)).
\]
As previously, it is not hard to see that $\phi'$ has a constant sign, which must be positive due to the limits of $\phi$ at $\pm\infty$. Therefore $\tilde\phi'(0)=\phi'(0)$ and the uniqueness follows from Cauchy-Lipschitz Theorem. Differentiating the equation we see that $\phi'$ verifies
    \[
    -(\phi')''+(\omega_1-f'(\phi))\phi'=0.
    \]
    Since $\lim_{x\to-\infty}(\omega_1-f'(\phi))=\omega_1-f'(0)>0$ and $\lim_{x\to+\infty}(\omega_1-f'(\phi))=\omega_1-f'(\zeta(\omega_1))>0$,     \eqref{eq:exponential-decay} follows from classical ODE arguments.
  \end{proof}

\section{Multi-soliton up to time zero}
\label{sec:T0}

In this section we add extra conditions to Theorem \ref{thm_N_2} so
that the solution exists in $[0,\infty)$.

\begin{theorem}
\label{thm_N_2a} Consider \eqref{v100a0} with $f(u)=g(|u|^2)u$
satisfying \eqref{v100a1} and \eqref{v100a3}. Let $R$ be the same as
in \eqref{v100a6} and define $v_\star $ as in \eqref{v100a11}. Suppose
\begin{equation}
\label{eq6.1}
\bar v :=\max_{k=1,\ldots,N} |v_k| \le M v_\star ^M, \quad \text{for
  some}\quad M \ge 1.
\end{equation}
There exist constants $C>0$, $c_1>0$, $c_2>0$ and $v_{\sharp}=v_{\sharp}(M)\gg 1$,
such that if $v_\star >v_{\sharp}$, then there is a unique solution
$u\in C([0,\infty), H^1)$ to \eqref{v100a0} satisfying
\begin{align*}
e^{c_1 v_\star t} \| u-R \|_{S([t,\infty))} +e^{c_2 v_\star t} \|
  \nabla (u -R) \|_{S([t,\infty))} \le C , \qquad \forall\, t\ge 0.
\end{align*}
\end{theorem}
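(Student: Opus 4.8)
The plan is to follow the reduction behind Theorem \ref{thm_N_2}: setting $u=R+\eta$, the difference solves the Duhamel equation \eqref{v600a2} with $\profile=R$ and $\sourceterm=f(R)-\sum_{j=1}^N f(R_j)$, and by Lemma \ref{17_lem_1} the source decays at the fast rate $\lambda=\tilde c_1\sqrt{\omega_\star}\,v_\star$. The only reason Proposition \ref{prop_main_1} yields the solution on $[T_0,\infty)$ rather than on $[0,\infty)$ is that its threshold $T_*$ depends on the constant $C_2$ bounding $\|\nabla R\|$ and $\|\nabla \sourceterm\|$, and $C_2$ grows with the velocities: differentiating the Galilean phase in \eqref{v100a3a} gives $\|\nabla R\|_r\lesssim 1+\bar v$, and the proof of \eqref{17_source_40d} shows $C_2=\tilde C_2\lesssim_N \bar v^{O(1)}$, while every ungradient norm $\|R\|_r$ and the source constant $C_1=\tilde C_1$ are $O(1)$ in the velocities. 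Hence the first step is purely quantitative bookkeeping: under hypothesis \eqref{eq6.1} we get $C_2\lesssim_M v_\star^{O(M)}$, so $C_2$ is only polynomial in $v_\star$ whereas $\lambda$ is linear in $v_\star$; in particular $\log C_2\ll\lambda$.

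Second, I would re-examine the a priori estimate of Proposition \ref{prop_main_1} and locate every place where largeness of $T_0$ was invoked: each is a factor $e^{-\kappa\lambda T_0}$ (with $\kappa>0$ fixed) required to dominate either $C_2$ or a power of the ball radius. Because $\lambda\sim v_\star$ grows while $C_2$ is only polynomial, these requirements amount to $T_0\gtrsim \lambda^{-1}\log C_2\sim (\log v_\star)/v_\star$, which tends to $0$ as $v_\star\to\infty$. To actually reach $T_0=0$ I would run the contraction on $[0,\infty)$ in a ball that separates the two scales, keeping $e^{\lambda t}\|\eta\|_{S([t,\infty))}$ bounded by a moderate radius while allowing $e^{c_1\lambda t}\|\nabla \eta\|_{S([t,\infty))}$ up to a large radius comparable to $C_2\sim v_\star^{O(M)}$. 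The gradient estimate then closes with this large radius because $\nabla\eta$ and $\nabla R$ enter the relevant terms essentially linearly, multiplied by positive powers of $\|\eta\|$ that each carry a negative power of $\lambda$; exactly as in \eqref{17_g_102bbb}, the gain is independent of $C_2$, so no largeness of $T_0$ is needed there, and the inhomogeneous term $\|\nabla \sourceterm\|_2\lesssim C_2e^{-\lambda t}$ is absorbed by the radius together with one factor $\lambda^{-1}$.

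The hard part, and the step that genuinely needs \eqref{eq6.1}, is closing the estimate for $\|\eta\|_{S}$ itself uniformly down to $t=0$. The dangerous contributions are the super-linear terms $|\eta|^{1+2\alpha_k}$ (in high dimensions the critical-type term $|\eta|^{1+4/(d-2)}$, for $d=1,2$ the high power handled in Proposition \ref{prop_main_1}), which carry no smallness at $t=0$ in the pure-Strichartz framework and whose high Lebesgue norms, through Gagliardo--Nirenberg, reintroduce $\nabla\eta$ and hence the large radius $\sim v_\star^{O(M)}$. My plan is to estimate these specific monomials with the pointwise dispersive inequality, as in the proof of Proposition \ref{prop_main}, rather than Strichartz: strict energy-subcriticality $2\alpha_k<\alpha_{\max}$ gives a kernel exponent $\theta_k=d(\tfrac12-\tfrac1{2\alpha_k+2})<1$, and $\int_0^\infty\tilde\tau^{-\theta_k}e^{-\kappa\lambda\tilde\tau}\,d\tilde\tau\sim(\kappa\lambda)^{-(1-\theta_k)}$ supplies a gain uniform in $t\ge 0$. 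The crux is then a competition of exponents in $v_\star$: a fixed positive power of $\lambda\sim v_\star$ against the polynomial loss $v_\star^{O(M)}$ coming from the gradient radius. Since $\lambda$ is linear in $v_\star$ while the loss is merely polynomial, for $v_\star>v_\sharp(M)$ with $v_\sharp$ allowed to depend on $M$ the balance tips in our favor and the map becomes a self-map of the ball; this is precisely the role of \eqref{eq6.1}. I expect this reconciliation of the two competing powers of $v_\star$ to be the main technical obstacle.

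Finally, the contraction property follows from the same difference-of-nonlinearity bounds provided by Lemma \ref{lem_6_2}, giving both existence and the uniqueness asserted, and the resulting bound holds for all $t\ge 0$ with a constant $C$ comparable to the large radius $\sim v_\star^{O(M)}$, which may depend on $v_\star$ --- consistent with the remark after Theorem \ref{thm_N_1} that at small times the solution need not be close to $R$. As a conceptual check I would also keep in mind the time-translation reformulation: a solution on $[0,\infty)$ for $R$ is, up to relabeling, the $[T_0,\infty)$ solution of Theorem \ref{thm_N_2} for the profile with shifted centers $x_j-v_jT_0$; this viewpoint makes transparent that the sole obstruction to $T_0=0$ is the growth of the interaction constants with the velocities, which is exactly what \eqref{eq6.1} controls.
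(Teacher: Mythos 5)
Your opening bookkeeping is right ($C_1=O(1)$, $\lambda\sim v_\star$, $C_2\lesssim_M v_\star^{O(M)}$ under \eqref{eq6.1}), and so is your observation that every obstruction to a small $T_0$ in Proposition \ref{prop_main_1} has the form ``$e^{-\kappa\lambda T_0}$ must dominate $C_2$.'' But from there the proposal diverges from what actually works. The paper does \emph{not} run the contraction on $[0,\infty)$: it fixes $T_0=1$, notes that the smallness condition $e^{-c\lambda}(1+C_2)\le\varepsilon$ holds for $v_\star>v_\sharp(M)$ because the \emph{exponential} factor $e^{-c\lambda}=e^{-c'v_\star}$ beats the polynomial $C_2\lesssim_M v_\star^M$, and then extends the solution backward from $t=1$ to $t=0$ by local $H^1$ well-posedness: the equation is energy-subcritical, so the lifespan has a lower bound depending only on $\|\eta(1)\|_{H^1}$, which is small. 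This soft Cauchy-theory extension over the compact interval $[0,1]$ is entirely absent from your argument, and it is the step that makes the theorem easy.

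Your substitute mechanism for reaching $t=0$ directly has genuine gaps. First, the claimed ``competition of exponents'' goes the wrong way: the gain from the dispersive kernel is a \emph{fixed} negative power $(\kappa\lambda)^{-(1-\theta_k)}\sim v_\star^{-(1-\theta_k)}$, whereas the loss is $v_\star^{O(M)}$ with $M\ge1$ arbitrary; for large $M$ the product diverges, so no choice of $v_\sharp(M)$ closes the estimate. Only quantities exponentially small in $\lambda$ (such as $e^{-c\lambda T_0}$ with $T_0>0$ fixed) dominate arbitrary polynomials in $v_\star$, which is precisely why the paper keeps $T_0=1$ rather than $T_0=0$. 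Second, the pointwise-dispersive treatment of the monomials $|\eta|^{1+2\alpha_k}$ does not close for a general nonlinearity satisfying \eqref{v100a1}: with two exponents $\alpha_1<\alpha_2$ one must estimate $|\eta|^{1+2\alpha_1}$ in the dual Lebesgue exponent attached to $2\alpha_2+2$ (and vice versa), which requires norms $\|\eta\|_q$ with $q$ \emph{outside} the controlled range unless the H\"older-compatibility conditions \eqref{th64eq1}--\eqref{th64eq2} (equivalently \eqref{th64a}--\eqref{th64b}) hold. That is exactly the content, and the limitation, of Theorem \ref{thm_N_2c}, whereas Theorem \ref{thm_N_2a} assumes no such restriction on $(\alpha_1,\alpha_2)$. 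Third, for $d\ge3$ the tail power $1+\beta_2$ can be energy-critical, so in the gradient estimate the coefficient of $\nabla\eta$ is a power of $\|\nabla\eta\|_{S}$ itself (via Sobolev embedding), i.e.\ a power of your \emph{large} radius $\sim v_\star^{O(M)}$ with no compensating smallness at $t=0$; hence even the gradient half of your two-scale ball fails to be a self-map down to time zero. The correct repair is the paper's two-step structure: contraction on $[1,\infty)$, then local-in-time $H^1$ extension to $[0,1]$.
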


\begin{remark}
 The extra condition \eqref{eq6.1} is satisfied for example if
$v_j = \mu \tilde v_j$ for some fixed $\tilde v_j$ and $\mu$ is an
increasing parameter.
\end{remark}

\begin{proof}[Sketch of proof]
Following the proof of Lemma
\ref{17_lem_1}, the assumption \eqref{v600a1} of Proposition
\ref{prop_main_1} is satisfied with 
\[
T_0 = 1, \quad \lambda = c v_\star, \quad C_1 = C_0, \quad C_2  = C_0 \bar v,
\]
where $c=C(\alpha_1)\sqrt{\min_{j=1,\dots,N}\{ \omega_j\}}$ and
$C_0=C_0(d,N,\alpha_1,\alpha_2,(\omega_j)_{j=1}^N, (x_j)_{j=1}^N)$ are
independent of $ (v_j)_{j=1}^N$.  The smallness condition used in the
proof of Proposition \ref{prop_main_1} is of the form
\begin{equation}
  \label{remark25-eq}
 e^{-c \lambda_* t} (1+C_2) \le \varepsilon
\end{equation}
for some small $\varepsilon>0$ independent of $C_2$. It can be satisfied either
by fixing $\lambda_*\gg 1$ independent of $C_2$ and then requiring
$t\ge T_0$ with $T_0=T_0(C_2)$ large (as in the proof of Proposition \ref{prop_main}), or by fixing $T_0=1$, using the assumption $C_2 = C_0
\bar v\le C_0 M v_\star^M$, and requiring $v_\star$ sufficiently large.  In
the latter case we get a solution $\eta(t)$ for $1 \le t <\infty$.
Since the soliton piece $R \in C([0,\infty), H^1)$ and
  $\|\eta(t=1)\|_{H^1}$ can be chosen sufficiently small by enlarging
  $\lambda_*$, we can extend $\eta(t)$ up to time $t=0$ with $O(1)$
  estimates by local existence theory in $H^1$.  
\end{proof}

The following result is $L^2$-theory for $L^2$-subcritical and
critical nonlinearities.

\begin{theorem}
\label{thm_N_2b} Consider \eqref{v100a0} with $f(u)=g(|u|^2)u$
satisfying \eqref{v100a1} and \eqref{v100a3}. Further assume $\alpha_2
\le 2/d$.  Let $R$ be the same as in \eqref{v100a6} and define
$v_\star $ as in \eqref{v100a11}.  There exist constants $C>0$,
$c_1>0$,  and $v_{\sharp}\gg 1$, such that if $v_\star
>v_{\sharp}$, then there is a unique solution $u\in C([0,\infty),
  L^2)$ to \eqref{v100a0} satisfying
\begin{align*}
e^{c_1 v_\star t} \| u-R \|_{S([t,\infty))} \le C , \qquad \forall\, t\ge 0.
\end{align*}
\end{theorem}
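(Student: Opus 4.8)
The plan is to reproduce the strategy of Theorem \ref{thm_N_2}, but to run the fixed-point argument \emph{purely at the $L^2$-Strichartz level}, discarding all gradient bounds. As in \eqref{17_30a}, I set $\eta = u-R$ and solve the integral equation \eqref{v600a2} with $\profile = R$ and $\sourceterm = f(R) - \sum_{j=1}^N f(R_j)$. By Lemma \ref{17_lem_1}, the profile obeys $\|R(t)\|_\infty \le \tilde C_1$ (see \eqref{17_source_40a}) and the source decays in every Lebesgue norm, $\|\sourceterm(t)\|_r \le \tilde C_1 e^{-\tilde c_1\sqrt{\omega_\star}v_\star t}$ (see \eqref{17_source_40b}); in particular, using the admissible pair $(\infty,2)$, $\|\sourceterm\|_{\ntt} \le \int_t^\infty \|\sourceterm(\tau)\|_2\,d\tau \lesssim \tfrac1{v_\star}e^{-\tilde c_1\sqrt{\omega_\star}v_\star t}$. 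The reason a gradient-free argument suffices here, unlike in Proposition \ref{prop_main_1} where the general nonlinearity was controlled up to the energy-critical exponent $4/(d-2)$ via $\dot H^1$ interpolation, is the extra hypothesis $\alpha_2 \le 2/d$: it forces the largest power $1+2\alpha_2$ appearing in $f$ to be at most the mass-critical power $1+4/d$, so every nonlinear term closes with the $L^2$-admissible pair $\bigl(\tfrac{2(d+2)}d,\tfrac{2(d+2)}d\bigr)$ alone.

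Concretely, I would set $\lambda := c_1 v_\star$ with $c_1 = \tilde c_1\sqrt{\omega_\star}$ and contract the map $V$ defined by the right-hand side of \eqref{v600a2} on the ball
\[
B = \Bigl\{ \eta : \sup_{t\ge 0} e^{\lambda t}\|\eta\|_{\stt} \le \delta \Bigr\}.
\]
Estimate \eqref{v200a4c} together with $\|R\|_\infty \le \tilde C_1$ gives the pointwise bound $|f(R+\eta)-f(R)| \lesssim |\eta| + |\eta|^{1+2\alpha_1} + |\eta|^{1+2\alpha_2}$, and the Strichartz estimate then splits $\|V\eta\|_{\stt}$ into three contributions. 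The source and the linear term $|\eta|$ are placed in $L^1_t L^2_x$ and yield factors $\tfrac1{v_\star}$ and $\tfrac1\lambda$ respectively, both small once $v_\star$ is large. Each genuinely subcritical term $|\eta|^{1+2\alpha_i}$ with $2\alpha_i < 4/d$ is estimated with a dual admissible pair and H\"older in time, gaining both a power $\delta^{1+2\alpha_i}$ and a $\lambda$-smallness from the time integration. Choosing $\delta$ small first, then $\lambda$ (equivalently $v_\star$) large, makes $V$ map $B$ into $B$ and contract; the difference estimate is structurally identical. Crucially, every bound holds for all $t\ge 0$ with no lower threshold $T_0$, because the factors $\tfrac1\lambda$, $\tfrac1{v_\star}$ and $e^{-(\cdots)\lambda t}\le 1$ are all bounded down to $t=0$ — this is exactly what removing the gradient estimates buys us, in contrast with Proposition \ref{prop_main_1}.

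The main obstacle is the borderline mass-critical term, present precisely when $\alpha_2 = 2/d$. There $\bigl\||\eta|^{1+4/d}\bigr\|_{\ntt} \lesssim \|\eta\|_{L^{2(d+2)/d}_{t,x}([t,\infty))}^{1+4/d} \le \delta^{1+4/d}e^{-(1+4/d)\lambda t}$, so after weighting by $e^{\lambda t}$ its contribution is only $\lesssim \delta^{1+4/d}$: it gains no smallness from large $\lambda$, only from the ball radius. This is why $\delta$ must be fixed small first (so that $\delta^{4/d}$ beats the implied Strichartz constant), independently of $\lambda$, and only afterwards may one send $v_\star\to\infty$. Once the fixed point $\eta\in\stt$ is produced, standard Strichartz theory gives $\eta\in C([0,\infty),L^2)$; since each soliton lies in $C([0,\infty),H^1)$ we have $R\in C([0,\infty),L^2)$, whence $u=R+\eta\in C([0,\infty),L^2)$, and uniqueness in the stated class follows from the contraction, exactly as in Theorems \ref{thm_N_1}--\ref{thm_N_2}.
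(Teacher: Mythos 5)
Your proposal is correct and is essentially the paper's own argument: the paper likewise reruns only the gradient-free first part of the proof of Proposition \ref{prop_main_1}, checking that under $\alpha_j\le 2/d$ each superlinear term $|\eta|^{2\alpha_j+1}$ closes in $\ntt$ using $L^2$-admissible norms of $\eta$ alone (it does so for general $m=2\alpha_j+1$ via the dual pair built on $r=m+1$ together with H\"older on unit time intervals, which is the precise form of your ``dual admissible pair and H\"older in time'' step; note that this mechanism yields an $O(1)$ constant rather than any $1/\lambda$ gain at $t=0$, so smallness for \emph{all} superlinear terms, not only the mass-critical one, must come from the ball radius $\delta$). Your explicit quantifier ordering --- fix $\delta$ small against the Strichartz constant first, then take $v_\star$ (hence $\lambda$) large --- is exactly what allows the argument to run down to $T_0=0$, a point the paper's sketch (``the rest of the proof is the same as the first part of the proof for Proposition \ref{prop_main_1}'') leaves implicit.
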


\begin{proof}[Sketch of proof]
We will modify the first part of the proof of Proposition
\ref{prop_main_1} which bounds $\eta=u-R$ in $\stt$. In that
  part, estimates for $\nabla \eta$ is only used to bound the global
  nonlinear terms $\sum_{j=1,2}|\eta|^{2\alpha_j+1}$ in the
  dual Strichartz space $\ntt$.  Suppose $\alpha_2 \le 2/d$ and
\[
\norm{\eta}_{\stt} \le e^{-\lambda t}, \quad \forall t>0.
\]
For $m =2\alpha_j+1$, $r=m+1$,  and $a$ such that $\frac 2a+\frac dr = \frac d2$, we have
\[
\norm{|\eta|^{m}}_{\ntt}\le \norm{|\eta|^{m}}_{L^{a'}L^{r'}(t,\infty)}\le 
\norm{\eta}^{m}_{L^{a'm}L^{r'm}(t,\infty)},
\]
where $r'=r/(r-1)$ and $a'=a/(a-1)$.
Let $q$ and $b$ be such that
\[
q=r'm, \quad \frac 2b+\frac dq = \frac d2.
\]
We claim that  $\alpha_j \le 2/d$ is equivalent to
\begin{equation}
\label{eq6.2}
a'm \le b.
\end{equation}
Indeed, \eqref{eq6.2} amounts to
\[
\frac 2{a'} \ge \frac {2m}{b} = m\left(\frac d2-\frac dq\right) 
= m\frac d2-\frac d{r'} ,
\]
i.e.
\[
m\frac d2\le \frac d{r'} +\frac 2{a'} = d+2 - \left(\frac d{r} +\frac 2{a}\right)=\frac d2 +2,
\]
which is exactly $\alpha_j \le 2/d$.
Thus
\begin{align*}
\norm{|\eta|^{m}}_{\ntt} 
&\le \bke{\int_t^\infty \norm{\eta(s)}_{L^q}^{a'm}ds}^{1/a'}
= \bke{\sum_{k=0}^\infty \int_{t+k}^{t+k+1} \norm{\eta(s)}_{L^q}^{a'm}ds}^{1/a'}
\\
&\le \bke{\sum_{k=0}^\infty \bke{\int_{t+k}^{t+k+1} \norm{\eta(s)}_{L^q}^b ds}^
{\frac{a'm}b}}^{1/a'}
\\
&\le \bke{\sum_{k=0}^\infty \bke{e^{-b\lambda (t+k) }}^ {\frac{a'm}b}}^{1/a'}
= C e^{-m\lambda t} .  
\end{align*}
We have used \eqref{eq6.2} in the second inequality. The rest of the
proof is the same as the first part of the proof for Proposition
\ref{prop_main_1}.
\end{proof}

The following result is valid for both $L^2$-subcritical and
$L^2$-supercritical nonlinearities. Its proof extends that of
Proposition \ref{prop_main}.
\begin{theorem}
\label{thm_N_2c} Consider \eqref{v100a0} with $f(u)=g(|u|^2)u$
satisfying \eqref{v100a1} and \eqref{v100a3}. Let $\beta_j=2\alpha_j$,
$j=1,2$, with $0<\beta_1 \le \beta_2< \alpha_{\max}$.  Assume for $d \ge 3$
\begin{alignat}{2}
\label{th64a}
&\frac{\beta_2 }{1+\beta_2} \le \beta_1 \le \beta_2, &\text{if}\quad 0
< \beta_2 < \frac{\alpha_{\max}}2,
\\
\label{th64b}
%
&\frac{\beta_2 }{\alpha_{\max}+1-\beta_2} < \beta_1 \le \beta_2,
\qquad &\text{if}\quad \frac{\alpha_{\max}}2 \le \beta_2 < \alpha_{\max},
\end{alignat}
and for $d=1,2$ we assume \eqref{th64a} only. 
Then we can choose $r_1$
and $r_2$ such that
\begin{align}
\label{th64eq1}
0 \le r_1-2 &\le \beta_1 \le \beta_2 \le r_2-2 < \alpha_{\max},\\
\label{th64eq2}
r_1\beta_2 &\le r_1r_2-r_1-r_2 \le r_2\beta_1.  
\end{align}
Let $R$ be the same as in \eqref{v100a6} and define $v_\star $ as in
\eqref{v100a11}.  For any choice of $r_1,r_2$ satisfying
\eqref{th64eq1}--\eqref{th64eq2}, there exist constants $C>0$,
$c_1>0$, and $v_{\sharp}\gg 1$, such that if $v_\star >v_{\sharp}$,
then there is a unique solution $u=R+\eta$ to \eqref{v100a0} on
$[0,+\infty)$ satisfying
\begin{equation}
\label{eq6.5}
\norm{\eta(t)}_{L^{r_1} \cap L^{r_2}} \le C e^{-c_1 v_\star t},\quad \forall t\ge 0.
\end{equation}
Moreover,
\[
\norm{\eta}_{\stt} \le C e^{-c_1 v_\star t},\quad \forall t\ge 0.
\]
\end{theorem}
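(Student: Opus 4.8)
The plan is to extend the dispersive-contraction scheme of Proposition \ref{prop_main} from a single Lebesgue exponent to the pair $(r_1,r_2)$, so as to accommodate the two nonlinearity powers $\beta_1=2\alpha_1$ and $\beta_2=2\alpha_2$ at once while keeping existence down to $t=0$. Setting $\eta=u-R$, the perturbation solves the Duhamel equation \eqref{v600a2} with $\profile=R$ and $\sourceterm=f(R)-\sum_{j=1}^N f(R_j)$; Lemma \ref{17_lem_1} gives $\norm{R(t)}_r\lesssim 1$ for every $1\le r\le\infty$ and $\norm{\sourceterm(t)}_r\lesssim e^{-\tilde c_1\sqrt{\omega_\star}v_\star t}$, so I would fix $\lambda:=\tilde c_1\sqrt{\omega_\star}v_\star$, which becomes as large as needed once $v_\star>v_\sharp$. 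Since the whole scheme is a contraction, uniqueness in the relevant class comes for free.

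Before the analysis I would settle the feasibility of the exponents: the regime hypotheses \eqref{th64a}--\eqref{th64b} are exactly what make the system \eqref{th64eq1}--\eqref{th64eq2} solvable for some $(r_1,r_2)$. Condition \eqref{th64eq1} forces $r_1-2$ and $r_2-2$ to bracket $\beta_1\le\beta_2$ inside $[0,\alpha_{\max})$, and in particular $r_2<\alpha_{\max}+2=2^*$ when $d\ge3$; condition \eqref{th64eq2} is the genuinely coupling constraint. By elementary algebra one checks that \eqref{th64a} permits the choice $r_1=r_2$ slightly above $\beta_2+2$, whereas in the mass-supercritical subcase the ceiling $r_2<2^*$ is tight and the sharper lower bound \eqref{th64b} on $\beta_1$ is precisely what keeps the admissible window nonempty.

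The core is a fixed point for $V\eta(t)=i\int_t^\infty e^{i(t-\tau)\Delta}(f(\profile+\eta)-f(\profile)+\sourceterm)(\tau)\,d\tau$ in
\begin{equation*}
B=\Bigl\{\eta:\ \norm{\eta}_{\tilde X}:=\sup_{t\ge0}e^{\lambda t}\bigl(\norm{\eta(t)}_{r_1}+\norm{\eta(t)}_{r_2}\bigr)\le C\Bigr\}.
\end{equation*}
Applying the dispersive inequality in $L^{r_1}$ and in $L^{r_2}$ produces the time singularities $|t-\tau|^{-\theta_i}$ with $\theta_i=d(\tfrac12-\tfrac1{r_i})$, which lie in $[0,1)$ by \eqref{th64eq1} (this is where $r_i<2^*$ is used). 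The integrand is dominated pointwise, via \eqref{v200a4c}, by $|\eta|(|R|^{\beta_1}+|R|^{\beta_2}+|\eta|^{\beta_1}+|\eta|^{\beta_2})$. The cross terms $|\eta||R|^{\beta_j}$ I would handle by H\"older, placing $\eta$ in $L^{r_i}$ and the power of $R$ in the complementary exponent, all of which are finite and $O(1)$ by Lemma \ref{17_lem_1}. For the nonlinear self terms I would use $\norm{|\eta|^{1+\beta_2}}_{r_1'}=\norm{\eta}_{(1+\beta_2)r_1'}^{1+\beta_2}$ and $\norm{|\eta|^{1+\beta_1}}_{r_2'}=\norm{\eta}_{(1+\beta_1)r_2'}^{1+\beta_1}$, where $r_i'=r_i/(r_i-1)$: the two inequalities in \eqref{th64eq2} say exactly that $(1+\beta_2)r_1'\le r_2$ and $(1+\beta_1)r_2'\ge r_1$, and together with \eqref{th64eq1} this places all the intermediate exponents in $[r_1,r_2]$, so they interpolate against $\norm{\eta}_{r_1}$ and $\norm{\eta}_{r_2}$ and inherit the decay $e^{-\lambda\tau}$. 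Performing the time integral $\int_t^\infty|t-\tau|^{-\theta_i}e^{-\lambda\tau}\,d\tau\lesssim\lambda^{-(1-\theta_i)}e^{-\lambda t}$ then shows, for $\lambda$ large, that $V$ preserves $B$ and contracts; as the weight and the integral are harmless at $t=0$, the solution lives on all of $[0,\infty)$ with no large-time cutoff, which is \eqref{eq6.5}. The final Strichartz bound is obtained as in \eqref{tmp_100aaabbb} by feeding the constructed $\eta$ once more through the Strichartz estimate with the same H\"older and interpolation bookkeeping.

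I expect the main obstacle to be the numerology rather than any analytic difficulty: one must simultaneously enforce $\theta_i<1$ (i.e.\ $r_i<2^*$), the interpolation requirement that both $(1+\beta_2)r_1'$ and $(1+\beta_1)r_2'$ fall in $[r_1,r_2]$, and the existence of admissible $(r_1,r_2)$ under \eqref{th64a}--\eqref{th64b}. It is exactly the mass-supercritical subcase \eqref{th64b}, where the admissible window for $r_2$ is narrowest, that makes this balancing the delicate part of the argument.
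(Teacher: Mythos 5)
Your analytic core is sound and in fact coincides with the paper's own argument: the paper also runs a dispersive-estimate contraction controlling $e^{\lambda t}\bigl(\norm{\eta(t)}_{r_1}+\norm{\eta(t)}_{r_2}\bigr)$, treats the cross terms $|\eta|\,|R|^{\beta_k}$ and the source $\sourceterm$ by H\"older against Lemma \ref{17_lem_1} as "nice terms", observes exactly as you do that the two halves of \eqref{th64eq2} are equivalent to $(1+\beta_2)r_1'\le r_2$ and $r_1\le(1+\beta_1)r_2'$ while \eqref{th64eq1} supplies the remaining inequalities $r_1\le(1+\beta_1)r_1'$ and $(1+\beta_2)r_2'\le r_2$, and closes with a Strichartz estimate using the admissible pair $(2/\theta_1,r_1)$. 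Your remark that this scheme needs no gradient bounds (hence no velocity-dependent constant, hence $T_0=0$) is also the correct explanation of why the solution reaches time zero.

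The genuine gap is in the feasibility step, i.e.\ the theorem's first assertion that admissible $(r_1,r_2)$ exist, which you dispatch with an incorrect witness. You propose $r_1=r_2$ slightly above $\beta_2+2$ under \eqref{th64a}; this fails twice over. First, \eqref{th64eq1} demands $r_1-2\le\beta_1\le\beta_2$, so any $r_1>\beta_1+2$ is excluded outright. Second, with $r_1=r_2=r$ the two inequalities of \eqref{th64eq2} read $\beta_2\le r-2\le\beta_1$, which combined with $\beta_1\le\beta_2$ forces the degenerate situation $\beta_1=\beta_2$ and $r=\beta_2+2$ exactly; equal exponents can never work when $\beta_1<\beta_2$, which is the entire point of the two-exponent setup. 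A correct witness in the regime \eqref{th64a} is $(r_1,r_2)=(2,\,2\beta_2+2)$: then $r_1r_2-r_1-r_2=2\beta_2=r_1\beta_2$, so the left half of \eqref{th64eq2} holds with equality, the right half becomes $\beta_2\le(1+\beta_2)\beta_1$, which is precisely \eqref{th64a}, and the constraint $r_2-2=2\beta_2<\alpha_{\max}$ is precisely the case hypothesis $\beta_2<\alpha_{\max}/2$. In the regime \eqref{th64b} the ceiling $r_2<\alpha_{\max}+2$ genuinely bites and no such one-line witness exists; the paper proves solvability there by rewriting \eqref{th64eq1}--\eqref{th64eq2} in the equivalent form $r_1-2\le b_1(r_1,r_2)\le\beta_1\le\beta_2\le b_2(r_1,r_2)\le r_2-2$ with $b_1=r_1-1-r_1/r_2$ and $b_2=r_2-1-r_2/r_1$, and then analyzing how the curves traced by $(b_1,b_2)$ sweep the $(\beta_1,\beta_2)$ region as $r_1,r_2$ vary. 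Without an argument of this kind your proof establishes only the second, conditional assertion of the theorem, and the specific feasibility claim you make is false.
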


Note the first strict inequality in \eqref{th64b}, compared to  \eqref{th64a}.
See Figure~\ref{fig:thm64} for the $\beta_1$-$\beta_2$ region when
$d=3$.  Remark also that \eqref{th64a} and \eqref{th64b} are equivalent (when $d\geq 3$) to 
\begin{align}
& \beta_1 \le \beta_2\le \frac{\beta_1}{1-\beta_1}, &&\text{if}\quad 0
< \beta_1 < \frac{\alpha_{\max}}{\alpha_{\max}+2},
\\
& \beta_1 \le \beta_2<\frac{(\alpha_{\max}+1)\beta_1}{1+\beta_1},
 &&\text{if}\quad \frac{\alpha_{\max}}{\alpha_{\max}+2} \le \beta_1 < \alpha_{\max}.
%
\end{align}

\begin{figure}[ht]
   \begin{center}
\includegraphics[scale=0.7]{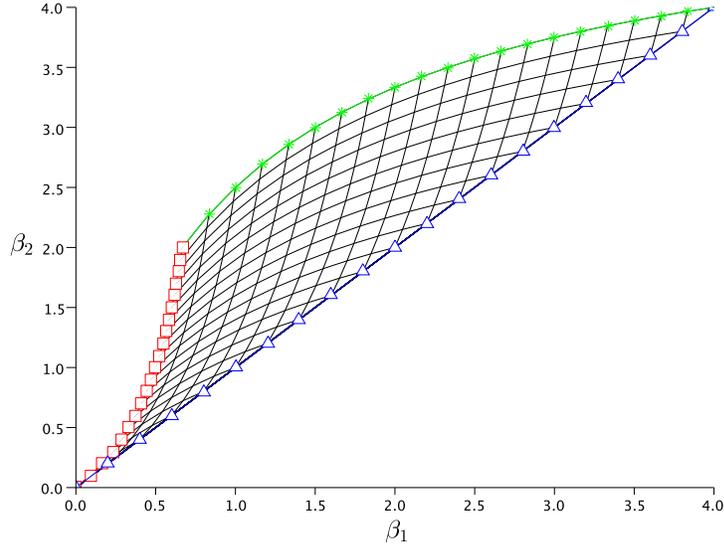}
   \end{center}
   \caption{Region of admissible $\beta_1,\beta_2$ in Theorem \ref{thm_N_2c} for $d=3$ }
\label{fig:thm64}
\end{figure}

\begin{proof}[Sketch of proof of Theorem \ref{thm_N_2c}]
For $j=1,2$ and $\theta_j = d(\frac 12 - \frac 1{r_j}) \in (0,1)$, we
have
\[
\norm{\eta(t)}_{L^{r_j}} \lesssim \int_t^\infty |t-\tau|^{-\theta_j}
\sum_{k=1,2} \norm{|\eta(\tau)|^{1+\beta_k}}_{r_j'} d\tau +
\text{(nice terms)},
\]
where $r_j'= r_j/(r_j-1)$.  The nice terms can be estimated as in the
proof of Proposition \ref{prop_main}.  Note that
\[
\norm{|\eta|^{1+\beta_k}}_{r_j'} = \norm{\eta}_{(r_j')(1+\beta_k)} ^{1+\beta_k}
\]
can be estimated by H\"older inequality and \eqref{eq6.5} if
\begin{equation}
\label{eq2}
r_1 \le \frac {r_j}{r_j-1} (1+\beta_k) \le r_2, \quad \forall j,k.
\end{equation}
For $j=1$, the left inequality of \eqref{eq2} is always true.  The
right inequality is equivalent to $ r_1(1+\beta_2) \le r_2(r_1-1)$, or
\begin{equation}
\label{eq3}
r_2 \le r_1(r_2-1-\beta_2) .
\end{equation}
For $j=2$, the right inequality of \eqref{eq2} is always true.  The
left inequality is equivalent to $ r_1(r_2-1) \le r_2(1+\beta_1)$, or
\begin{equation}
\label{eq4}
r_2(r_1-1-\beta_1) \le r_1.
\end{equation}
Equations \eqref{eq3} and \eqref{eq4} are equivalent to
\eqref{th64eq2}.  Furthermore, \eqref{th64eq1} and \eqref{th64eq2} can
be combined into the following equivalent condition
\begin{equation}
\label{th64eq3}
0 \le r_1-2 \le b_1(r_1,r_2) \le \beta_1 \le \beta_2 \le b_2(r_1,r_2)
\le r_2-2 < \alpha_{\rm max} 
\end{equation}
where
\[
b_1(r_1,r_2)= r_1-1-r_1/r_2,\qquad   b_2(r_1,r_2) = r_2-1-r_2/r_1.
\]
It turns out that when $2\le r_1 \le r_2<\alpha_{\rm max}+2$ we always have 
\[
0 \le r_1-2 \le b_1(r_1,r_2) \le b_2(r_1,r_2) \le r_2-2 < \alpha_{\rm max}.
\]
Thus for any $(\beta_1,\beta_2)$ in the right triangle with a vertex
$(b_1(r_1,r_2),b_2(r_1,r_2))$ and hypotenuse on the line
$\beta_1=\beta_2$, the pair $r_1,r_2$ satisfies \eqref{th64eq1}
and \eqref{th64eq2}.

Denote the curve $\Gamma(r_1)$ for fixed $2\le r_1 < 2+ \alpha_{\rm max}$,
\[
\Gamma(r_1)=\{ (b_1(r_1,r_2),b_2(r_1,r_2)): r_1\le r_2 \le 2+ \alpha_{\rm max}\}.
\]
It satisfies
\[
b_2 = \frac {b_1}{r_1-1-b_1}, \quad b_1 =
\frac{(r_1-1)b_2}{1+b_2},
\]
and starts at $(r_1-2,r_1-2)$. It goes to infinity with asymptote
$b_1=r_1-1$ for $d=1,2$, while ends at $\Sigma(2+ \alpha_{\rm
  max})$ to be defined below for $d\ge 3$.  It moves to the
right as $r_1$ increases. 

Denote the curve $\Sigma(r_2)$ for fixed $2<
r_2 \le 2+ \alpha_{\rm max}$,
\[
\Sigma(r_2)=\{ (b_1(r_1,r_2),b_2(r_1,r_2)): 2\le r_1 \le r_2\}.
\]
It satisfies
\[
b_2 = (r_2-1)\frac {b_1}{1+b_1}.
\]
It starts at $\Gamma(2)$ and ends at $(r_2-2,r_2-2)$.  It moves upward as
$r_2$ increases.

For given $0<\beta_1<\beta_2<\alpha_{\rm max}$, conditions
\eqref{th64a}--\eqref{th64b} imply that $(\beta_1,\beta_2)$ is on the
right of $\Gamma(2)$ and, if $d\ge 3$, is below $\Sigma(\alpha_{\rm
  max})$. Thus we can find $R_1=R_1(\beta_1,\beta_2)$ and
$R_2=R_2(\beta_1,\beta_2)$ such that $(\beta_1,\beta_2)$ is the
intersection point of $\Gamma(R_1)$ and $\Sigma(R_2)$, and $R_1 \le
R_2$. To satisfy \eqref{th64eq3}, we can either choose
$(r_1,r_2)=(R_1,R_2)$, or any $2\le r_1<R_1 \le R_2 < r_2<2+
\alpha_{\rm max}$ as long as the intersection point
$\Gamma(r_1)\cap\Sigma(r_2)$ is at upper-left direction to
$(\beta_1,\beta_2)$.

The above shows we can estimate $|\eta|^{1+\beta_k}$ in $L^{r_j'}$ for
$j,k=1,2$.

For the Strichartz estimate, since $(2/\theta_1,r_1)$ is admissible,
we have with $a=(2/\theta_1)'$
\begin{align*}
\norm{\eta}_{\stt} &\lesssim 
 \norm{f(W+\eta)-f(W)+H}_{L^a(t,\infty;L^{r_1'})} 
\\
&\lesssim \norm{e^{-c_1 v_* \tau}}_{L^a(t,\infty)}  \lesssim v_*^{-1/a}e^{-c_1 v_* t}.
\end{align*}
\end{proof}

\section*{Acknowledgments}
Part of this work was done when S. Le Coz was visiting the Mathematics
Department at the University of British Columbia, he would like to
thank his hosts for their warm hospitality. The research of S. Le Coz
is supported in part by the french ANR through project ESONSE.  The
research of Li and Tsai is supported in part by grants from 
the Natural Sciences and
Engineering Research Council of Canada.

\bibliographystyle{abbrv} 
\bibliography{biblio}

\end{document}